\documentclass[12pt]{amsart}
\usepackage[margin=1in]{geometry}
\usepackage{cancel}
\usepackage{amssymb}
\usepackage{enumerate}
\usepackage{hyperref}
\usepackage{enumitem}
\usepackage{amsmath}
\usepackage{latexsym}
\usepackage{extarrows}
\usepackage{graphicx}
\usepackage{txfonts}
\usepackage{mathtools}
\usepackage{booktabs}
\usepackage{float}
\usepackage{bm}
\usepackage[normalem]{ulem}
\usepackage{soul}
\usepackage{tikz-cd}
\usepackage{xcolor}
\usepackage{quiver}
\usepackage{comment}
\usepackage{relsize}
\makeatletter
\@namedef{subjclassname@2020}{%
  \textup{2020} Mathematics Subject Classification}
\makeatother
\usepackage[T1]{fontenc}
\usepackage{cleveref}

\hypersetup{
    hidelinks,
    linktoc=page,
    }

\counterwithin{figure}{section}

\theoremstyle{plain}

\newtheorem{theorem}{Theorem}[section]
\newtheorem{proposition}[theorem]{Proposition}

\newtheorem{lemma}[theorem]{Lemma}
\newtheorem{corollary}[theorem]{Corollary}
\newtheorem{conjecture}[theorem]{Conjecture}

\theoremstyle{definition}
\newtheorem{definition}[theorem]{Definition}

\newtheorem{remark}[theorem]{Remark}

\newcommand\Z{\mathbb{Z}}
\newcommand\R{\mathbb{R}}
\newcommand\Q{\mathbb{Q}}
\newcommand\T{\mathbb{T}}

\newcommand\N{\mathbb{N}}

\newcommand\Y{\mathcal{Y}}

\newcommand\XX{\mathrm{X}}

\newcommand{\weight}{\mathrm{wt}}

\newcommand{\g}{\mathfrak{g}}
\newcommand{\f}{\mathfrak{f}}

\newcommand{\fl}[1]{\lfloor #1\rfloor}

\newcommand{\ad}{\mathrm{ad}}
\newcommand{\mF}{\mathcal{F}}
\newcommand{\tred}{\widetilde{\mathrm{red}}}

\begin{document}


\baselineskip=17pt


\title{Non-vanishing of multiple correlation sequences}
\author[O. Shalom]{Or Shalom}
\address{Department of Mathematics\\ Bar Ilan University \\ 
Ramat Gan \\
5290002, Israel}
\email{Or.Shalom@math.biu.ac.il}

\date{\today}

\begin{abstract}We resolve in the negative a conjecture of Frantzikinakis and Kuca \cite[Conjecture 4]{FranKuca} concerning the vanishing of multiple correlation sequences in nilsystems. Specifically, we prove the existence of a $3$-step nilsystem $(G/\Gamma, \mu_{G/\Gamma}, R_\alpha)$ and bounded functions $f_0, f_1, f_2 \in L^\infty(\mu_{G/\Gamma})$ orthogonal to the Conze--Lesigne factor $L^2(G/G_3\Gamma)$, whose associated multiple correlation sequence $$a(n) = \int_{G/\Gamma} f_0(x) f_1(\alpha^n x) f_2(\alpha^{2n} x) \, d\mu_{G/\Gamma}(x)$$ does not decay to zero. The same counterexample also refutes another conjecture of Frantzikinakis and Kuca \cite[Conjecture 3]{FranKuca} and a conjecture of Leibman \cite{Leibman1} (see Appendix~\ref{Leibmanfalse}). To construct this counterexample, we develop a framework for Fourier analysis on $G/\Gamma$ where $G$ is the free $3$-step nilpotent Lie group on $4$ generators, a methodology that extends naturally to general nilsystems.
\end{abstract}

\subjclass[2020]{Primary 37A15, 11B30; Secondary 28D15, 37A35.}

\keywords{multiple correlation sequences, nilsystems, Host--Kra seminorms, Frantzikinakis--Kuca conjecture.}

\maketitle
\setcounter{tocdepth}{1} 

\tableofcontents
\section{Introduction}
The study of multiple ergodic averages and their corresponding correlation sequences is an active area of research in modern ergodic theory and additive combinatorics. Furstenberg's foundational work ~\cite{furstenberg1977ergodic} established a key connection between dynamical systems and combinatorial number theory.
Central to this connection is the notion of a \emph{multiple correlation sequence}.

\begin{definition}[Multiple Correlation Sequence]
Let $(X, \mathcal{B}, \mu, T)$ be a measure-preserving system, where $(X, \mathcal{B}, \mu)$
is a probability space and $T: X \to X$ is an invertible measure-preserving transformation.
For a positive integer $\ell$ and functions $f_0, f_1, \dots, f_\ell \in L^\infty(\mu)$, an
$\ell$-fold multiple correlation sequence is a sequence $a: \mathbb{Z} \to \mathbb{C}$ defined
by
\begin{equation}\label{eq:corr_seq}
a(n) = \int_X f_0 \cdot T^n f_1 \cdot T^{2n} f_2 \cdots T^{\ell n} f_\ell \, d\mu.
\end{equation}
\end{definition}

The Furstenberg correspondence principle demonstrates that the structural properties of these sequences, specifically the positivity of $$\liminf_{N\rightarrow\infty}\frac{1}{N}\sum_{n=1}^Na(n)$$ when $f_0=f_1=\dots=f_{\ell}$ is a non-trivial positive function, dictate the
existence of arithmetic patterns within dense subsets of integers. Specifically, it implies an important result of Szemer\'edi \cite{szemeredi1975sets} stating that any subset of integers with positive upper
Banach density contains arbitrarily long arithmetic progressions, as well as various extensions (see e.g., \cite{bergelson1996polynomial,BHK,Franpoly,shalom2,ABB,ABS} and more).

Following this breakthrough, a major line of inquiry emerged regarding the precise structural classification of the sequences $a(n)$. It was long suspected that the asymptotic behavior of
multiple correlation sequences is governed by algebraic structures known as
\emph{nilsystems}.

\begin{definition}[Nilsystem and Nilsequence]
Let $G$ be a $d$-step nilpotent Lie group and $\Gamma$ be a discrete co-compact subgroup of
$G$. The compact homogeneous space $X = G/\Gamma$ is called a nilmanifold. Let $g \in G$ and
let $R_g: X \to X$ be the translation defined by $R_g(x\Gamma) = gx\Gamma$. The system
$(G/\Gamma, \mathcal{B}, \mu_{G/\Gamma}, R_g)$, where $\mu_{G/\Gamma}$ is the unique normalized
$G$-invariant Haar measure on $G/\Gamma$, is called a nilsystem.
A basic nilsequence is a sequence of the form $\psi(n) = F(g^n x_0)$, where $F \in C(G/\Gamma)$
and $x_0 \in G/\Gamma$. A nilsequence is any sequence that can be obtained as a uniform limit
of basic nilsequences. Similarly, $\psi$ is called a basic generalized nilsequence if $F$ is
Riemann-integrable and a generalized nilsequence is any sequence that can be obtained as a
uniform limit of basic generalized nilsequences.
\end{definition}
In the context of ergodic theory and multicorrelation sequences, these nilsystems were originally introduced by Conze and Lesigne in \cite{cl1,cl2,cl3}, and were later used by Host and Kra
\cite{host2005nonconventional} and Ziegler \cite{ziegler2007universal} who proved the following result independently:
\begin{theorem}
    Let $\XX=(X,\mathcal{B},\mu,T)$ be an ergodic invertible measure-preserving system, let $\ell\geq 1$ and $f_0,\dots,f_\ell\in L^\infty(\mu)$. If $a(n)$ is the multiple correlation sequence associated with $f_0,\dots,f_\ell$, then
    $$\lim_{N\rightarrow\infty}\frac{1}{N}\sum_{n=1}^N a(n) = 0$$ 
    whenever at least one of the $f_i$'s is orthogonal to the maximal factor of $\XX$ isomorphic to an inverse limit of $\ell$-step nilsystems (the $\ell$-step nilfactor). 
\end{theorem}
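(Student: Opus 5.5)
The plan is the standard Host--Kra argument: control the averages by the uniformity seminorms $\|\cdot\|_{U^{\ell+1}}$ (the Gowers--Host--Kra seminorms), then identify the kernel of $\|\cdot\|_{U^{\ell+1}}$ with the orthogonal complement of the $\ell$-step nilfactor $\mathcal{Z}_\ell$.

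\textbf{Step 1: reduction to an arbitrary index.} First I reduce to the case where the function orthogonal to the nilfactor is $f_i$ with $i\ge 1$. If it is $f_0$, I use invariance of $\mu$ under $T^{-n}$ together with the symmetry $n\mapsto -n$; after this the roles of the indices can be reshuffled. Alternatively, the van der Corput argument below also works directly with $f_0$ as a dual function.

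\textbf{Step 2: the seminorm estimate.} By induction on $\ell$ I prove that for $\|f_j\|_\infty\le 1$,
\[
\limsup_{N\to\infty}\Big|\frac1N\sum_{n=1}^N a(n)\Big|\le C\min_{1\le i\le \ell}\|f_i\|_{U^{\ell+1}}.
\]
\begin{itemize}
\item \emph{Base case $\ell=1$.} By the mean ergodic theorem the average equals $\int f_0\,\mathbb{E}(f_1\mid\mathcal{I})\,d\mu$. By ergodicity this is $\int f_0\,d\mu\int f_1\,d\mu$, which is bounded by $\|f_1\|_{U^2}$ (indeed by $\|f_1\|_{U^1}$).
\item \emph{Inductive step.} I apply the van der Corput lemma in the Hilbert space $L^2(\mu)$ to the vectors $u_n=\prod_{j=1}^\ell T^{jn}f_j$. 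After composing with $T^{-n}$, the inner products $\langle u_{n+h},u_n\rangle$ become $\ell-1$-fold correlations of the functions $f_{j+1}\cdot T^{(j+1)h}\bar f_{j+1}$. The inductive hypothesis bounds their averages by $\|f_\ell\cdot T^{\ell h}\bar f_\ell\|_{U^\ell}$. Averaging over $h$ and using the inductive definition $\|f\|_{U^{k+1}}^{2^{k+1}}=\lim_H \frac1H\sum_h\|f\cdot T^h\bar f\|_{U^k}^{2^k}$, together with Hölder's inequality, gives the bound by $\|f_\ell\|_{U^{\ell+1}}$. The dilation by $\ell$ is handled by the standard fact that the seminorm for $T^{\ell}$ is controlled by that for $T$. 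Since the indices $j$ can be permuted by a change of variables, the same bound holds with any $f_i$ in place of $f_\ell$.
\end{itemize}

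\textbf{Step 3 (main obstacle): the structure theorem.} It remains to show that $\|f\|_{U^{\ell+1}}=0$ whenever $f\perp L^2(\mathcal{Z}_\ell)$. This is the deep part. One defines $\mathcal{Z}_\ell$ as the factor characterized by the condition $\|f\|_{U^{\ell+1}}=0$ iff $\mathbb{E}(f\mid\mathcal{Z}_\ell)=0$; this follows from the construction of the cube measures $\mu^{[\ell+1]}$ and the duality of the seminorm with the dual functions. The hard content is then proving that this factor is an inverse limit of $\ell$-step nilsystems. Here I would invoke the Host--Kra structure theorem (or Ziegler's): it proceeds by induction through isometric extensions by compact abelian groups, shows that the relevant cocycles are of type $\ell$, and uses that the group of transformations preserving the cube structure is nilpotent Lie after passing to factors. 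I would cite this rather than reprove it.

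\textbf{Conclusion.} Given the structure theorem, if some $f_i\perp\mathcal{Z}_\ell$ then $\|f_i\|_{U^{\ell+1}}=0$, and Step 2 forces the averages to tend to zero.
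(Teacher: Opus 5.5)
Your proposal is correct and is the standard Host--Kra argument. A van der Corput induction gives control of the averages by $\|\cdot\|_{U^{\ell+1}}$, and the structure theorem shows that the factor $Z_\ell$ detected by this seminorm is an inverse limit of $\ell$-step nilsystems, hence contained in the maximal $\ell$-step nilfactor; this is exactly what the paper relies on, since it gives no proof of its own and simply cites \cite{host2005nonconventional} and \cite{ziegler2007universal}.

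Two points of phrasing need tightening. First, the dilation in the $h$-average needs only positivity: $\frac1H\sum_{h<H}a_{\ell h}\le \ell\cdot\frac1{\ell H}\sum_{h<\ell H}a_h$ for $a_h=\|f_\ell\cdot T^{h}\bar f_\ell\|_{U^\ell}^{2^\ell}\ge 0$, and a literal comparison $\|f\|_{U^k(T^\ell)}\lesssim\|f\|_{U^k(T)}$ is false for $k=1$. Second, the indices cannot be permuted arbitrarily, but control by each $f_i$ still follows by applying the inductive hypothesis directly to $f_i\cdot T^{ih}\bar f_i$, using the reversal $T\mapsto T^{-1}$ from your Step 1 when that function lands in position $0$.
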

Bergelson, Host and Kra \cite[Theorem 1.9]{BHK} took this theorem one step further. Namely, they showed that in an ergodic system every multiple correlation sequence $a(n)$ decomposes as $$a(n) = \psi(n) + \omega(n)$$ where $\psi$ is an $\ell$-step nilsequence and $\omega(n)$ a null sequence (i.e., $\lim_{N\rightarrow\infty}\frac{1}{N}\sum_{n=1}^N |\omega(n)|=0$). This is particularly useful when one wishes to study averages of multiple correlation sequences. Unfortunately, for many combinatorial applications such averages are insufficient (see for example the work of Frantzikinakis and Host \cite{FranHost} on the partition regularity of quadratic equations). Various generalizations of Bergelson, Host and Kra exist, see e.g., \cite{Leibman1,Leibman2,MCS,MCFranHost,integerpart,decomposition,morejoint,FranKucajoint,Leng2025}, but they all rely on a null sequence (sometimes along a sparse sequence) as the \emph{error term} (See also \cite{ShalomGrothendieck} for a different approach.).

In \cite{Fran}, Frantzikinakis formulated a sweeping structural conjecture classifying general multiple
correlation sequences. To frame it, recall that the case $\ell=1$ is settled by classical
spectral theory: by the Herglotz theorem on positive-definite sequences, every correlation
sequence $\int_X f_0\cdot T^{n}f_1\,d\mu$ is the sequence of Fourier coefficients of a finite
complex measure on the circle. For $\ell\ge2$ no comparably transparent spectral description
is available, and supplying a structural substitute is the content of the following problem.

\begin{conjecture}[Frantzikinakis Conjecture {\cite[Problem~1]{Fran}}]\label{prob:F1}
Let $\ell\geq 1$, let $\XX=(X,\mathcal{B},\mu,T)$ be an ergodic system and let $a(n)$ be an $\ell$-fold multiple correlation sequence. Then, for every $\varepsilon>0$ there exists a complex Borel measure $\nu$ of bounded variation on a compact metric space $S$ and for all $n\in \mathbb{N}$ a measurable maps $s\mapsto \psi_s(n)\in L^\infty(\nu)$, where $\psi_s$ is a generalized nilsequence, such that $$\left|a(n) - \int_S \psi_s(n)\,d\nu(s)\right| < \varepsilon$$ for all $n\in \mathbb{N}$. In other words, $a(n)$ can be approximated in $\ell^\infty(\N)$ by integral combinations of generalized nilsequences.
\end{conjecture}
Informally, one could view generalized nilsequences as \emph{high order phases} and Frantzikinakis asks whether there exists a \emph{high order spectral measure} $\nu=\nu(f_0,\dots,f_\ell)$ on some \emph{high order spectrum} $S$ such that $a(n)$ is \emph{approximately} an integral combination of some higher order phases.

This is far stronger than the result of Bergelson--Host--Kra mentioned above, replacing the error term (the null sequence) with a sequence that is small in $\ell^\infty(\N)$. However, the cost is that we must rely on \emph{generalized nilsequences} that are less understood than just \emph{nilsequences}. In \cite{BrietGreen} Bri\"et and Green produced, already for
$\ell=2$ with iterates $T^{n},T^{2n}$, a system and functions $f_0,f_1,f_2$ whose correlation
$\int_X f_0\cdot T^{n}f_1\cdot T^{2n}f_2\,d\mu$ is not an approximate integral combination of
$2$-step nilsequences. Whether generalized nilsequences suffice remains
open. Unfortunately, to this day there are no non-trivial instances where the Frantzikinakis conjecture is known to hold for any $\ell\geq 2$.

Cutting across this qualitative picture is a quantitative one: rather than ask what $a(n)$
looks like, one asks which Gowers--Host--Kra seminorm governs its size. It is now classical, see
\cite{host2005nonconventional}, that every system carries seminorms $\|\cdot\|_{U^{s}(\XX)}$,
$s\ge1$, together with the Host--Kra factors $Z_{s}(\XX)$, linked by
$\|f\|_{U^{s+1}(\XX)}=0\iff\mathbb E(f\mid Z_{s}(\XX))=0$. For ergodic systems, $Z_s(\XX)$
is an inverse limit of $s$-step nilsystems, so $\|\cdot\|_{U^{s+1}}$ measures the failure of
$f$ to be captured by its degree-$s$ nilfactor. In the setting of
\eqref{eq:corr_seq}, and more generally a pattern $(k_1,\dots,k_\ell)$ of distinct non-zero
integers, one says the correlation is \emph{controlled by $\|\cdot\|_{U^{s}(\XX)}$} if it vanishes
in the limit whenever one of its functions has vanishing $U^{s}(\XX)$-seminorm. 

In their work on the degree-lowering method, Frantzikinakis and Kuca \cite{FranKuca}
distilled this expectation into two conjectures. The first fixes the optimal degree of
seminorm control for averages along arithmetic progressions (we will not define all the notions in this conjecture formally as they are not necessary for this paper).

\begin{conjecture}[Frantzikinakis--Kuca {\cite[Conjecture~3]{FranKuca}}]\label{conj:FK3}
Let $a\colon\mathbb N\to\Z$ be strictly increasing. If $a$ is good for seminorm control
along $\ell$-term arithmetic progressions for the system $(X,\mathcal X,\mu,T)$, then it is
good for degree-$(\ell+1)$ seminorm control along $\ell$-term arithmetic progressions for this
system.
\end{conjecture}
Moreover, Frantzikinakis and Kuca show that this conjecture implies the following pointwise vanishing statement \cite[\S3.8]{FranKuca}.

\begin{conjecture}[Frantzikinakis--Kuca {\cite[Conjecture~4]{FranKuca}}]\label{conj:FK4}
Let $(G/\Gamma,\mu_{G/\Gamma},R_\alpha)$ be an ergodic nilsystem and let
$k_1,\dots,k_\ell\in\mathbb Z$ be non-zero and distinct. Then
\[
\lim_{n\to\infty}\int_{G/\Gamma} f_0(x)\cdot f_1(\alpha^{k_1 n}x)\cdots
f_\ell(\alpha^{k_\ell n}x)\,d\mu_{G/\Gamma}(x)=0
\]
whenever $f_0,\dots,f_\ell\in L^\infty(\mu_{G/\Gamma})$ satisfy $\|f_j\|_{U^{\ell+1}(G/\Gamma)}=0$ for
some $j\in\{0,\dots,\ell\}$.
\end{conjecture}

In the case $\ell=1$, it asserts that
$\|f\|_{U^2(X)}=0$ forces $\int g\cdot f(\alpha^{n}x)\,d\mu\to0$. This case was established by Ackelsberg, Richter and the author in \cite{ARS}, and independently by Frantzikinakis and Kuca in \cite[Theorem 3.7]{FranKuca} (see also \cite{HKM,ParrySpectral,Stepin} for previous partial results). By a \emph{degree lowering argument}, Frantzikinakis and Kuca \cite{FranKuca} reduce the
general conjecture to the case of an $(\ell+1)$-step nilsystem, and confirmed it whenever that
nilsystem is a unipotent affine transformation of a torus. For genuinely non-abelian
nilsystems the question was left open. In fact, Frantzikinakis and Kuca record that it remained open
already in the first instance $\ell=2$, $(k_1,k_2)=(1,2)$ \cite[\S3.8]{FranKuca}. It is
exactly this case that the present paper settles, in the negative.

\begin{theorem}[Main Result]\label{thm:main}
Conjecture~\ref{conj:FK4} is false. For $\ell=2$ and $(k_1,k_2)=(1,2)$, there exists an ergodic
nilsystem $(G/\Gamma,\mu_{G/\Gamma},R_\alpha)$ on the free $3$-step nilpotent Lie group on four
generators, and bounded functions $f_0,f_1,f_2\in L^\infty(\mu_{G/\Gamma})$ with
$\|f_i\|_{U^3(G/\Gamma)}=0$ for all $i$, such that
\begin{equation}\label{eq:main}
a(n)=\int_{G/\Gamma} f_0(x)\cdot f_1(\alpha^{n}x)\cdot f_2(\alpha^{2n}x)\,d\mu_{G/\Gamma}(x)
\end{equation}
does not tend to $0$: there is a constant $L>0$ and a sequence $n_k\to\infty$ with
$\lvert a(n_k)\rvert\to L$.
\end{theorem}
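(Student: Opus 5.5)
The plan is to build the counterexample from vertical characters of the free $3$-step nilpotent group $G$ on four generators $e_1,\dots,e_4$, with $\Gamma$ the lattice generated by the $e_i$. Take $\alpha=\exp(\sum_i \theta_i X_i)$ with $\theta_1,\dots,\theta_4$ rationally independent, so that its image in the torus $G/G_2\Gamma$ is irrational and $R_\alpha$ is ergodic. By the Host--Kra structure theory, the Conze--Lesigne factor of $G/\Gamma$ is $G/G_3\Gamma$. Hence any $f$ satisfying $f(gx)=\chi(g)f(x)$ for all $g\in G_3$, with $\chi$ a nontrivial character of the torus $G_3/(G_3\cap\Gamma)$, has $\|f\|_{U^3}=0$: such an $f$ is orthogonal to every $G_3$-invariant function. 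I will take $f_0,f_1,f_2$ with vertical characters $\chi_0,\chi_1,\chi_2$. Since $a(n)$ is unchanged when all three functions are translated by a common $g\in G_3$ (Haar measure is $G$-invariant and $G_3$ is central), we get $a(n)=\chi_0\chi_1\chi_2(g)\,a(n)$. So $a\not\equiv 0$ forces $\chi_0\chi_1\chi_2=1$, and I will impose this constraint.

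Next, compute $a(n)$ by Fourier analysis on $G/\Gamma$ in Mal'cev coordinates. In the free $3$-step group, $G_3$ is spanned by the basic commutators $[[e_i,e_j],e_k]$, giving $20$ coordinates, and $G_2/G_3$ has dimension $6$. A vertical-character function is determined by its restriction to a fundamental domain $\{(x_1,x_2,x_3)\}$ with $x_1\in[0,1)^4$, $x_2\in[0,1)^6$, $x_3\in[0,1)^{20}$, and it has a twisted-periodic form $f(x)=e(\chi\cdot x_3)\,F(x_1,x_2)$, where $F$ satisfies explicit quasi-periodicity relations under integer shifts. Write $\alpha^{n}x$ and $\alpha^{2n}x$ via the Baker--Campbell--Hausdorff formula. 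The $x_3$-components then pick up polynomial terms in $n$, of degree up to $3$, and bilinear and trilinear terms in $n$ and the lower coordinates of $x$. Integrating over $x_3$ removes the dependence because $\chi_0\chi_1\chi_2=1$. What remains is
\[
a(n)=\int F_0(y)\,F_1(\alpha^n y)\,F_2(\alpha^{2n}y)\,e(\Phi_n(y))\,dy,
\]
where $\Phi_n$ is a phase that is polynomial in $n$ and in $(x_1,x_2)$.

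The key point is to choose $\chi_1,\chi_2$, with $\chi_0=\overline{\chi_1\chi_2}$, such that the degree-$2$-in-$n$ contributions cancel and the terms that are trilinear in $(n,n,x_1)$ coming from $\alpha^n$ and $\alpha^{2n}$ cancel in the combination $\chi_1(\cdot)+\chi_2(2\,\cdot)$. The mechanism is the usual $1,2$ progression identity $\chi_1+4\chi_2=0$ applied to the relevant $3$-fold commutators, while the $n$-linear phase terms in $x_2$ survive. Freeness with four generators supplies enough independent commutators, coming from the Hall--Witt relations, to satisfy these linear constraints with nonzero characters. That is why four generators, rather than two, should be needed. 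With the $n^2$ terms cancelled, $\Phi_n(y)$ becomes a $2$-step-type phase in $n$ depending on $y$. I then choose $F_i$ to be smooth bump-type sections, for example Gaussian-weighted theta-like functions that build the required quasi-periodicity. With these choices, $a(n)$ reduces to a finite combination of terms of the form $e(\text{polynomial in } n\theta)$ times integrals that do not decay.

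Finally, choose $n_k$ by simultaneous Diophantine approximation: take $n_k\to\infty$ with $n_k\theta_i\to 0$ modulo $1$, and likewise for the relevant quadratic and cubic expressions in the $\theta_i$, which Weyl equidistribution of polynomial sequences on the torus allows. Along this sequence $\alpha^{n_k}$ tends to an element of $\Gamma$ modulo small errors, so $a(n_k)$ tends to an explicit integral $L$. I then check that $L\neq0$ for a suitable choice of bumps, for instance positive Gaussians, which give a positive integrand.

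The main obstacle is the linear-algebra step: finding vertical characters with $\chi_0\chi_1\chi_2=1$ for which the $2n$-versus-$n$ cancellation leaves a nonzero limit. I expect the obvious candidates to produce oscillatory integrals that vanish; this is exactly the Frantzikinakis--Kuca torus case. So I will first search among characters supported on commutators $[[e_i,e_j],e_k]$ with distinct indices, which exist only in the free group on at least three or four generators, and I will verify nonvanishing by explicit computation of the resulting Gaussian integral.
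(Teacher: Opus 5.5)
Your proposal has a genuine gap, and it sits exactly at the step you call the key point. There are three constraints on the top-layer characters, not two.
\begin{enumerate}
\item[(a)] $\chi_0+\chi_1+\chi_2=0$, from integrating out the top-layer variable.
\item[(b)] $\chi_1+2\chi_2=0$. The $n$-linear term $\tfrac n2\langle\chi_1+2\chi_2,[\alpha,x_2]\rangle$ has an $x_2$-slope that grows linearly in $n$. In the paper's computation a nonzero $\chi_1+2\chi_2$ makes $a(n)=0$ for all large $n$, so these terms must cancel rather than ``survive''.
\item[(c)] The $n^2$ condition.
\end{enumerate}
If you impose $\chi_1+4\chi_2=0$ as an identity in $V_3^*$, then (a), (b), (c) form a Vandermonde system with nodes $0,1,2$. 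This forces $\chi_0=\chi_1=\chi_2=0$ however many generators you take, so freeness and Hall--Witt relations cannot help. The actual $n^2$ coefficient is $\tfrac1{12}\langle\chi_1+4\chi_2,\mathrm{ad}_\alpha^2W^{(1)}\rangle$, which is quadratic in $\alpha$. What is really needed is therefore $(\mathrm{ad}_\alpha^2)^*\eta=0$ for $\eta=\chi_2$, a condition that couples $\eta$ to the rotation. When the ten products $\alpha_i\alpha_j$ are $\Q$-independent, this again forces $\eta=0$, because the symmetrized brackets $[e_i,[e_j,e_\ell]]+[e_j,[e_i,e_\ell]]$ span $V_3$. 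So $\alpha$ must lie on a special quadric variety while $1,\alpha_1,\dots,\alpha_4$ stay $\Q$-independent for ergodicity. In the paper this variety is $\alpha_2\alpha_4=3\alpha_1\alpha_3$, $\alpha_1\alpha_4=2\alpha_2\alpha_3$, and the explicit $\eta$ with entries $2,-9,1,6$ lies in the kernel. Your $\theta$ is generic, so the cancellation you rely on does not occur.

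The endgame also fails. Writing $\alpha^{n_k}=\epsilon_k\gamma_k$ with $\epsilon_k\to e$ and $\gamma_k\in\Gamma$ gives $R_\alpha^{n_k}\approx L_{\gamma_k}$, which is a nontrivial left translation of $G/\Gamma$. An ergodic nilsystem has Lebesgue spectrum, hence no rigidity, on the orthocomplement of its Kronecker factor, and all your $f_i$ live there. So no limit integral can be extracted this way.

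Even after the $n^2$ terms vanish, the surviving phase is affine in $x^{(1)}$ on floor-determined cells. Its slopes are $n$ times linear combinations of the fractional parts $\{n\alpha_\ell\}$, plus bounded bracket terms. For these to have nonzero, non-integer limits, the combinations must be $c/n_k+o(1/n_k)$ with prescribed $c$. Equidistribution controls fractional parts only to $o(1)$, so it cannot supply this. The paper instead does the following.
\begin{enumerate}
\item[(i)] It makes $\lfloor n_k\alpha_1\rfloor$ run through even convergent denominators of $\sqrt{3/2}$.
\item[(ii)] It uses the Pell identity $3q^2-2h^2=1$ to make the integer slope in the layer-one coordinate $x^{(1)}_3$ exactly $-3$, which the extra character $3e_3^*$ in $f_0$ cancels.
\item[(iii)] It builds a transcendental $\alpha$ by a nested-box Cantor scheme; the paper expects algebraic $\alpha$ to fail by Schmidt's subspace theorem.
\end{enumerate}
Finally, positive Gaussian bumps do not give a positive integrand once the oscillating phase $e(\Phi_n)$ is present. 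The claim $L\neq0$ has to come from computing the limiting slopes explicitly.
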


\begin{remark}
Since Frantzikinakis and Kuca prove that Conjecture~\ref{conj:FK3} implies
Conjecture~\ref{conj:FK4} \cite[\S3.8]{FranKuca}, the same example refutes
Conjecture~\ref{conj:FK3} as well. In Appendix~\ref{Leibmanfalse} we also use this counterexample to refute a conjecture of Leibman from \cite{Leibman1}. 
\end{remark}
We give a brief computation-free proof for Theorem \ref{thm:main}. This proof should be viewed as an overview rather than a rigorous proof. The full proof, as well as optional discussions and remarks (see Remark~\ref{informal:remark}, Remark~\ref{etachoice:rem} and  Section~\ref{informal}) detailing the choices of the parameters are given throughout the rest of this paper.
\begin{proof}[Proof (sketch) of Theorem \ref{thm:main}]
    Let $\f=(\f_{4,3},+,[\cdot,\cdot])$ be the free $3$-step nilpotent Lie algebra over $\R$ on four generators denoted by $e_1,e_2,e_3$ and $e_4$, equipped with the Hall basis (see Section~\ref{Hallbasis:section}). Let $G=(\f,\ast)$ denote the Lie group associated with $\f$ (see Section~\ref{corresponding Lie algebra:section}) and let $\Gamma\leq G$ denote the subgroup generated by $e_1,e_2,e_3,e_4$. Choose a strong Mal'cev basis $\Y$ adapted to $\Gamma$ in the natural manner (see Section~\ref{strongMalcev}), let $\tau:\R^{30}\rightarrow G$ denote the coordinate map associated with this basis (second-kind coordinates) and $\mu:\R^{30}\times \R^{30}\rightarrow \R^{30}$ denote the multiplication in these coordinates (i.e., $\mu(x,x') = \tau^{-1}(\tau(x)\ast \tau(x'))$, see Theorem~\ref{secondcoor:thm}). Then $\tau$ gives rise to a natural measure-theoretic isomorphism $G/\Gamma \cong [0,1)^{30}$, where $\tau([0,1)^{30})=F\subseteq G$ is the fundamental domain (see Lemma~\ref{lem:reduceFD}). Identifying $[0,1)^{30}$ with $\T^{30}$ in the obvious manner allows us to use Fourier analysis to study the multiple correlation sequences (see Section~\ref{Fourier:section}). The measure-theoretic decomposition $G/\Gamma = G/G_2\Gamma \times G_2/G_3\Gamma_2 \times G_3/\Gamma_3$ (here $\Gamma_i = \Gamma\cap G_i$) corresponds to the decomposition $\T^{30}=\T^{4}\times \T^{6}\times \T^{20}$ and in particular, a character $\xi\in\widehat{\T^{30}}\cong \Z^{30}$ corresponds to a function $f_\xi\in L^2(\mu_{G/\Gamma})$ of zero $U^{3}(G/\Gamma)$-seminorm if and only if $\xi^{(3)} \neq0$ where $\xi=(\xi^{(1)},\xi^{(2)},\xi^{(3)}) \in \Z^{4}\times \Z^{6}\times \Z^{20}$ (see \cite{host2005nonconventional}). Let $f_0,f_1,f_2$ be the functions corresponding to the characters
    $$\xi_0:=(\chi,0,\eta)\quad \xi_1 := (0,0,-2\eta)\qquad \xi_2 :=(0,0,\eta)$$ where $\chi\in \Z^4$ is the element $(0,0,3,0)$ and $\eta\in \Z^{ 20}$ is the element whose only non-zero coordinates in the Hall basis are:
     \[
\langle\eta,e_{4,12}\rangle=2,\quad \langle\eta,e_{1,13}\rangle=-9,\quad
\langle\eta,e_{2,14}\rangle=1,\quad \langle\eta,e_{2,23}\rangle=6.
\]
Letting $\alpha\in G$ be an element of the form $\alpha = \sum_{i=1}^4 \alpha_i e_i$ and computing $\alpha^m\ast x$ in second-kind coordinates, projecting to $[0,1)^{30}\cong \T^{30}$ and evaluating by the characters $\xi_0,\xi_1,\xi_2$ we get an explicit formula for $a(n)$. Note that in practice we do not compute this formula in full in this paper, but only in the special case where it violates the Frantzikinakis--Kuca conjecture. In particular, we have:
\begin{itemize}
    \item[(1)] If $1,\alpha_1,\dots,\alpha_4$ are independent over $\Q$, then $(G/\Gamma,R_\alpha)$ is ergodic (Green's theorem \cite{AuslanderGreenHahn}).
    \item[(2)] When $\alpha\in \mathcal{V}$ where $\mathcal{V} = \{\alpha\in (0,1)^4 : \alpha_2\alpha_4 = 3\alpha_1\alpha_3 \land \alpha_1\alpha_4 = 2\alpha_2 \alpha_3 \}$ then $\eta\in \ker (\ad_{\alpha}^2)^*$,\footnote{Here $\ad_{\alpha}^2:\f\rightarrow \f_3$ is the map sending $X$ to $[\alpha,[\alpha,X]]\in \f_3$.} and for some such $\alpha$, and for all $n$ such that $\{n\alpha_i\}< \frac{1}{2}$ for all $i=1,2,4$, the sequence $a(n)$ is of the form:
    $$a(n) = c(n) \cdot \int_{0}^1 e^{2\pi i (s_3(n)+3)x_3}dx_3 \cdot \prod_{j=1,2,4} \frac{e^{2\pi i t_j(n)\cdot s_j(n)}-1}{2\pi i s_j(n)},$$ where $c(n)\in S^1$ (and thus can be ignored by taking an absolute value) and $s_1,s_2,s_3,s_4$ are given explicitly in \eqref{eq:bracketV} and $t_j(n) = 1-2\{n\alpha_j\}$.
    \end{itemize}
    Theorem~\ref{thm:transalign} is a number-theoretic component that guarantees the existence of such $\alpha\in \mathcal{V}$ (necessarily with transcendental coordinates) and a subsequence $n_k\rightarrow \infty$ satisfying $\{n_k\alpha_i\}<\frac{1}{2}$ for all $i=1,2,4$ and all $k\in \N$ and such that all the sequences $s_1(n_k),s_2(n_k),s_4(n_k)$ converge to a non-integer constant, and $s_3(n_k)= -3$, simultaneously, as $k\rightarrow\infty$. In particular, $|a(n_k)|$ approaches a product of non-zero constants which contradicts the conjecture that $a(n)\rightarrow 0$ as $n\rightarrow \infty$. 
\end{proof}
\begin{remark}
From our analysis we expect that the Frantzikinakis--Kuca conjecture holds for every $3$-step nilmanifold $G/\Gamma$ and for $\mu_G$-almost every translation $\alpha\in G$. Furthermore, if the coordinates of $\alpha$ in $G/G_2\Gamma \cong \T^{\dim(G/G_2)}$ are algebraic, then again we expect that the Frantzikinakis--Kuca conjecture holds due to Schmidt's subspace theorem (see \cite{Schmidtsubspace}). However, both of these claims are beyond the scope of this paper and are left as open questions.
\end{remark}
\section*{Acknowledgement}
The author is supported by the Alon Fellowship. I thank Florian Richter for introducing this problem to me. I thank Nikos Frantzikinakis and Borys Kuca for useful suggestions on an earlier version of this manuscript, leading in particular to Appendix~\ref{Leibmanfalse}.
\section*{The role of AI}
AI was used in this paper for the following purposes: 
\begin{itemize}
    \item[(1)] Claude (Opus 4.8) was used to program codes for various computations, some of which ended up being useful for this paper (see Appendix~\ref{app:cert}). 
    \item[(2)] Claude and Gemini (Pro 3.1) were used for copy-editing.
\end{itemize}
\section{Constructing the nilmanifold}
\subsection{The free Lie algebra on $4$ generators}\label{Hallbasis:section}
Throughout, we denote by $\f=\f_{4,3}$ the free $3$-step nilpotent Lie algebra over $\R$ on four generators denoted by $e_1,e_2,e_3$ and $e_4$. We let $$\f=\f_1 \geq \f_2 \geq \f_3 \geq \f_4=\{0\}$$ denote the lower central series of $\f$ and write $V_i = \f_i/\f_{i+1}$ for all $i=1,2,3$. For every $i$, $V_i$ is a vector space over $\mathbb{R}$ and thus by abusing notation we may write $\f= \bigoplus_{i=1}^3 V_i$ and simultaneously, view each one of these vector spaces as a subset of $\f$. We say that $X\in \f$ has weight $i$, and write $\weight(X)=i$ if $X\in V_i$. We refer to the vector spaces $V_1,V_2,V_3$ as \emph{layers}; specifically, $V_1$ is layer $1$, $V_2$ is layer $2$ and $V_3$ is layer $3$.

The dimensions of the vector spaces $V_i$ can be computed via Witt's formula (cf., \cite{Reutenauer}): $\dim V_n = \frac{1}{n} \sum_{d\mid n} \mu(d) r^{n/d},$ where $r=4$ is the number of generators. Direct computation gives
\begin{equation}
    \dim V_1 = 4, \quad \dim V_2 = 6, \quad \dim V_3 =20 \Longrightarrow\quad  \dim \f = 30.
\end{equation}
Next, we shall choose a convenient basis for our Lie algebra. We do so layer by layer.

\noindent\emph{Layer $1$:} take the generators $e_1,e_2,e_3,e_4$.

\noindent\emph{Layer $2$:} for every $1\leq i,j\leq 4$, let $e_{ij} = [e_i,e_j]$. Note that $e_{ii}=0$, and $e_{ij}=-e_{ji}$. Since the algebra is free, as a basis for the second layer we may take
$$\{e_{ij} : 1\leq i<j\leq 4\} = \{e_{12},e_{13},e_{14},e_{23},e_{24},e_{34}\}.$$

\noindent\emph{Layer $3$:} for every $1\leq i,j,k \leq 4$ write $e_{k,ij}=[e_k,[e_i,e_j]]=[e_k,e_{ij}]$. We emphasize that generally $[e_k,[e_i,e_j]]\not= [[e_k,e_i],e_j]$, however the Jacobi identity gives
$$[[e_k,e_i],e_j] = e_{i,jk}+ e_{k,ij}.$$ These $20$ brackets are linearly independent and hence, by the dimension count above, form a basis of $V_3$, known as the \emph{Hall basis} (see \cite{Reutenauer} for the general construction):
$$\{e_{k,ij} : 1\leq i < j \leq 4, k\geq i\}.$$
\begin{center}
\renewcommand{\arraystretch}{1.15}
\begin{tabular}{ll}
\toprule
pair $(i,j)$ & basic commutators $e_{k,ij}$ \quad $(k\ge i)$\\
\midrule
$(1,2)$ & $e_{1,12},\ e_{2,12},\ e_{3,12},\ e_{4,12}$\\
$(1,3)$ & $e_{1,13},\ e_{2,13},\ e_{3,13},\ e_{4,13}$\\
$(1,4)$ & $e_{1,14},\ e_{2,14},\ e_{3,14},\ e_{4,14}$\\
$(2,3)$ & $e_{2,23},\ e_{3,23},\ e_{4,23}$\\
$(2,4)$ & $e_{2,24},\ e_{3,24},\ e_{4,24}$\\
$(3,4)$ & $e_{3,34},\ e_{4,34}$\\
\bottomrule
\end{tabular}
\end{center}
Enumerating these basis elements $\mathcal{E}:=(e_1,\dots,e_{30})$ where $e_1,\dots e_4$ are the first layer, $e_5,\dots,e_{10}$ the second layer (in arbitrary order), and $e_{11},\dots,e_{30}$ the third layer, we get an ordered basis for $\f$. 
\subsection{The corresponding Lie group and lattice}\label{corresponding Lie algebra:section}
Throughout, we let $G$ be the Lie group associated with the Lie algebra $\f$. More specifically, as topological spaces we write $G=\f=\R^{30}$, and the multiplication on $G$ is given by the Baker--Campbell--Hausdorff formula. Namely,
$$X\ast Y = X+Y+\frac{1}{2}[X,Y] + \frac{1}{12}\left([X,[X,Y]]-[Y,[X,Y]]\right),$$ where $[\cdot,\cdot]$ are the Lie-brackets on $\f$. It is a classical result in Lie theory that $G=(G,\ast)$ is a connected, simply connected, $3$-step nilpotent Lie group. Its identity is $0\in G$, the inverse of $X$ is $-X$, and the exponential map (denoted by $\exp:\f=\R^{30}\rightarrow G$) is the identity. While the $\exp$ map is trivial, we may still use it to distinguish between elements in $\f$ and $G$. Furthermore, to avoid confusion, from now on we write $(g,h) = g\ast h\ast g^{-1}\ast h^{-1}$ for the Lie group commutator and keep the $[X,Y]=XY-YX$ notation for the Lie algebra bracket.\\
\noindent Next, we define $\Gamma$ to be the group generated by $e_1,e_2,e_3$ and $e_4$. Namely, $$\Gamma = \left<e_1,e_2,e_3,e_4\right>$$ with respect to the multiplication on $G$. From the construction, $\Gamma$ is the free $3$-step nilpotent Lie group on $4$ generators over $\Z$.
\begin{theorem}\label{nilmanifold}
    In the setting above, $X=G/\Gamma$ is a $3$-step nilmanifold. Namely, $G$ is a $3$-step nilpotent Lie group and $\Gamma\leq G$ is a discrete co-compact subgroup.
\end{theorem}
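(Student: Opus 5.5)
The plan is to treat the two claims separately: first that $(G,\ast)$ is a $3$-step nilpotent Lie group, then that $\Gamma$ is a discrete co-compact subgroup.

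\emph{Nilpotency.} Since $\f$ is a $3$-step nilpotent Lie algebra, the Baker--Campbell--Hausdorff series truncates at degree $3$. The displayed formula therefore defines a polynomial, hence smooth, group law on $\R^{30}$. The identity is $0$ and the inverse of $X$ is $-X$, and associativity is the classical content of BCH. I would then show that the group commutator satisfies
\[
(X,Y)=[X,Y]+\text{(terms of weight}\ge 3),
\]
and, more generally, that commutators of $G$-elements lying in $\f_i$ and $\f_j$ lie in $\f_{i+j}$. This follows by expanding BCH. It gives $G_i=\f_i$ as sets, so $G_4=\{0\}$ while $G_3=\f_3\neq 0$. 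Hence $G$ is exactly $3$-step nilpotent, connected and simply connected.

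\emph{Discreteness and co-compactness.} I would use Malcev's criterion: a subgroup of a simply connected nilpotent Lie group is a lattice if and only if it is discrete and co-compact, and a sufficient condition is that it is generated by elements whose logarithms span a $\Q$-form with rational structure constants. Concretely, I would exhibit a strong Malcev basis directly, namely the Hall basis $\mathcal{E}$ scaled suitably. The order $V_1$, then $V_2$, then $V_3$ is compatible with the filtration.

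The main step is to show that $\Gamma$ is precisely the set of points
\[
\exp(t_1Y_1)\ast\cdots\ast\exp(t_{30}Y_{30}),\qquad t\in\Z^{30},
\]
for this basis, with second-kind coordinates. I would argue layer by layer via the exact sequence
\[
1\to\Gamma\cap G_3\to\Gamma\to\Gamma/(\Gamma\cap G_3)\to 1 .
\]
\begin{itemize}
\item For the top layer, $\Gamma\cap G_3$ should equal the $\Z$-span of the group commutators $(e_k,(e_i,e_j))$. Since these are central and equal $e_{k,ij}$ exactly in $\f_3$, this is a lattice $\Lambda_3$ in $V_3$.
\item For the middle layer, $(e_i,e_j)=e_{ij}+(\text{weight }3)$, so $\Gamma\cap G_2$ modulo $\Lambda_3$ is $\Z$-spanned by the $e_{ij}$.
\item For the bottom layer, the projection of $\Gamma$ to $V_1$ is $\Z^4$.
\end{itemize}
The denominators $\tfrac12$ and $\tfrac1{12}$ from BCH are a nuisance here. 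I would handle them by noting that products of integer elements land in a lattice with bounded denominators. If needed, I would rescale the top layers (for instance by $\tfrac1{12}$) so that the lattices in each quotient are exactly captured.

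Once this is established, the map $\tau:[0,1)^{30}\to G$ is a fundamental domain for $\Gamma$, and it is bounded, which gives co-compactness. Discreteness follows because each layer lattice is discrete and the coordinate map is a homeomorphism.

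I expect the main obstacle to be this exact identification of $\Gamma\cap G_i$ layer by layer, in particular controlling the rational (non-integer) corrections from BCH. If that becomes messy, the fallback is to cite Malcev's theorem directly: the structure constants in the Hall basis are integers, so $\Z$-linear combinations generate a lattice, and $\Gamma$ is commensurable with it. Commensurability suffices for discreteness and co-compactness.
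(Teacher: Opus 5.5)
\textbf{The main step fails.} Your nilpotency argument is fine. The problem is in the lattice part: no rescaling of the Hall basis $\mathcal{E}$ is a strong Mal'cev basis adapted to $\Gamma$.

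\begin{itemize}
\item The image of $\Gamma\cap G_2$ in $V_2$ is $\bigoplus_{i<j}\Z e_{ij}$. So an adapted basis with layer-$2$ vectors $c_{ij}e_{ij}$ forces $c_{ij}=\pm1$, and hence $\exp(e_{ij})\in\Gamma$.
\item But $\exp(e_{ij})=(e_i,e_j)\ast\exp\bigl(-\tfrac12(e_{i,ij}+e_{j,ij})\bigr)$. Since $\Gamma\cap G_3$ is exactly $\exp\bigl(\bigoplus\Z e_{k,ij}\bigr)$, with no denominators, $\exp(e_{ij})\notin\Gamma$.
\item Your bullet ``$\Gamma\cap G_2$ modulo $\Lambda_3$ is $\Z$-spanned by the $e_{ij}$'' is exactly where this slips. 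The statement is true modulo $G_3$, not modulo $\Lambda_3$.
\item Rescaling the top layer by $\tfrac1{12}$ goes the wrong way. Then $\tau(\Z^{30})$ would contain $\exp(\tfrac1{12}e_{k,ij})\notin\Gamma$. The BCH denominators appear only in first-kind coordinates.
\end{itemize}

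So the claim that $\tau([0,1)^{30})$ is a fundamental domain is unsupported. You derive both discreteness and co-compactness from it, so both are left unproved.

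\textbf{The missing idea.} You should \emph{shear} rather than scale. Take $Y_{ij}:=\log(\exp e_i,\exp e_j)=e_{ij}+\tfrac12(e_{i,ij}+e_{j,ij})$ and $Y_{k,ij}:=\log(\exp e_k,\exp Y_{ij})=e_{k,ij}$. Every $\exp(Y)$ then lies in $\Gamma$, so $\tau(\Z^{30})\subseteq\Gamma$. The reverse inclusion follows by induction on word length: check that $\tau(s)\ast e_a^{\pm1}\in\tau(\Z^{30})$ using explicit commutator formulas whose second-kind coordinates are integers. Discreteness is then immediate because $\tau$ is a homeomorphism. A layer-by-layer floor reduction into $\tau([0,1)^{30})$ gives compactness. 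This is the paper's route.

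\textbf{A route that avoids adaptedness.} For the theorem as stated you can bypass the adapted basis entirely, and this is the sound core of your sketch.
\begin{itemize}
\item Discreteness: in Hall coordinates, consider the set of $X$ with $X^{(1)}\in\Z^4$, $X^{(2)}\in\tfrac12\Z^6$, $X^{(3)}\in\tfrac1{12}\Z^{20}$. It is closed under the truncated BCH product and under $X\mapsto -X$, because the Hall basis has integer structure constants. So it is a discrete subgroup containing $\Gamma$.
\item Co-compactness: right-multiply any $g$ by suitable powers of the $e_a$, then of the $\exp(Y_{ij})$, then of the central $\exp(e_{k,ij})$. This moves $g$ into a bounded set.
\end{itemize}

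\textbf{Your fallback.} There, ``commensurable'' is asserted without proof. The clean citation is Mal'cev's theorem: a discrete subgroup of a simply connected nilpotent Lie group is co-compact in the smallest connected closed subgroup containing it. Here that subgroup is $G$, because its Lie algebra contains $e_1,\dots,e_4$ and hence all of $\f$.
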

We will prove this theorem later (see Section~\ref{FD:section}). We stress that the Hall basis introduced above is not \emph{adapted} to $\Gamma$ (the rational coefficients in the Baker--Campbell--Hausdorff formula imply that $e_i\ast e_j$ fails to be in $\Z^{30}$). This is an issue we resolve in the next section.
\subsection{A strong Mal'cev basis adapted to $\Gamma$}\label{strongMalcev}
\begin{definition}[Mal'cev basis]
Let $\g$ be a nilpotent Lie algebra of dimension $d$. An ordered basis $\mathcal{Y}=(Y_1,Y_2,\dots,Y_d)$ is a \emph{strong Mal'cev basis} if $\g_j=\mathrm{Span}_{\R}\{Y_i : i\geq j\}$ is an ideal of $\g$ for every $j$. It is \emph{adapted to} a lattice $\Gamma$ if $$\Gamma = \{ \exp(s_1Y_1)\ast\cdots\ast\exp(s_dY_d) : (s_1,\dots,s_d)\in \Z^d \}.$$
\end{definition}
A classical result of Mal'cev provides the existence of a strong Mal'cev basis adapted to a lattice. However, here we do not need to rely on a general result as we shall construct said basis by hand. Again, we work layer by layer.

\emph{Layer $1$.} Set 
$$Y_i = e_i \qquad i=1,2,3,4.$$

\emph{Layer $2$.} In order to correct for the $\frac{1}{2}$ component, we set 
$$Y_{ij} = e_{ij}+\frac{1}{2}(e_{i,ij}+e_{j,ij}) \qquad 1\leq i<j\leq 4.$$

\emph{Layer $3$.} Fortunately, once we have accounted for the second layer, there is no need to further correct the third layer and we can take
$$Y_{k,ij} = e_{k,ij} \qquad 1\leq i<j\leq 4,\, k\geq i.$$
Enumerate $\mathcal{Y}=(Y_1,\dots,Y_{30})$, where $Y_1,\dots,Y_4$ are the first layer coordinates, $Y_5\dots Y_{10}$ are the second layer coordinates (in arbitrary order), and $Y_{11},\dots,Y_{30}$ are the third layer coordinates.
\begin{lemma}
    The ordered basis $\Y$ is a strong Mal'cev basis for $\f$.
\end{lemma}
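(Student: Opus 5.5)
We must show two things: that $\mathcal{Y}$ is a basis of $\f$, and that for every index $j$ the span $\g_j:=\mathrm{Span}_{\R}\{Y_i : i\geq j\}$ is an ideal of $\f$. Both reduce to elementary facts about the grading $\f=V_1\oplus V_2\oplus V_3$ and the lower central series $\f_1\geq\f_2\geq\f_3$.

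\emph{Basis.} With respect to the Hall basis $\mathcal{E}$, the change of basis to $\mathcal{Y}$ is unitriangular. Indeed, $Y_i=e_i$ for $i\leq 4$ and $Y_{k,ij}=e_{k,ij}$ in layer $3$. In layer $2$ we have $Y_{ij}=e_{ij}+(\text{an element of }V_3=\f_3)$. Order $\mathcal{E}$ and $\mathcal{Y}$ compatibly, with the same enumeration of the second layer. Then the transition matrix is the identity plus entries lying strictly in the block mapping layer-$2$ indices to layer-$3$ indices. It is therefore invertible, and $\mathcal{Y}$ is a basis of $\f$.

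\emph{Ideal property.} We split into cases according to the layer of the index $j$.

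\begin{itemize}
\item[(a)] If $j\geq 11$, then $\g_j$ is a subspace of $\f_3=V_3$. Since $\f$ is $3$-step nilpotent, $[\f,\f_3]=\f_4=\{0\}$. Hence $[\f,\g_j]=0\subseteq\g_j$, so every subspace of the center $\f_3$ is an ideal.

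\item[(b)] If $5\leq j\leq 10$, then $\g_j$ contains all of $V_3=\f_3$, because all $Y_{k,ij}$ have index $\geq 11$. Moreover $\g_j\subseteq\f_2$, since each $Y_{ij}\in\f_2$. It follows that $[\f,\g_j]\subseteq[\f,\f_2]=\f_3\subseteq\g_j$. Note that this case does not depend on the order chosen within layer $2$, which matches the paper's ``arbitrary order''.

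\item[(c)] If $1\leq j\leq 4$, then $\g_j$ contains $\f_2$, because $\mathrm{Span}\{Y_5,\dots,Y_{30}\}=\f_2$ by the unitriangularity above. Hence $[\f,\g_j]\subseteq[\f,\f]=\f_2\subseteq\g_j$.
\end{itemize}

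These three cases cover every $j$, so each $\g_j$ is an ideal and $\mathcal{Y}$ is a strong Mal'cev basis.

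No step is a real obstacle. The one point requiring care is checking that $\mathrm{Span}\{Y_i: i\geq 5\}$ equals $\f_2$ and $\mathrm{Span}\{Y_i : i\geq 11\}$ equals $\f_3$ despite the $\tfrac12$-corrections in $Y_{ij}$. This holds because those corrections lie in $\f_3$, which is already spanned by the later basis vectors.

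Adaptedness to $\Gamma$ is not part of this lemma. It would be treated separately via the Baker--Campbell--Hausdorff formula.
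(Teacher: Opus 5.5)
Your proof is correct and takes essentially the same approach as the paper. The paper argues uniformly that $\g_j$ lies in $V_{\geq\ell}$, where $\ell$ is the layer of $Y_j$, while containing all of $V_{>\ell}$, so $[\f,\g_j]\subseteq V_{>\ell}\subseteq\g_j$; your three cases are this argument written out layer by layer, and your unitriangularity check that $\mathcal{Y}$ is a basis (with $\mathrm{Span}\{Y_i:i\geq 5\}=\f_2$) makes explicit a point the paper leaves implicit.
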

\begin{proof}
    Let $X,Y\in \f$. Then, a direct computation gives
$$\log (\exp X,\exp Y) = \log(\exp X\ast \exp Y\ast \exp(-X)\ast \exp(-Y)) = [X,Y]+\frac{1}{2}[X+Y,[X,Y]].$$ In particular,
\begin{equation}\label{Yij}
\log(\exp(Y_i),\exp(Y_j)) = [e_i,e_j]+\frac{1}{2}[e_i+e_j,[e_i,e_j]]=Y_{ij},
\end{equation} and,
\begin{equation}\label{Ykij}
\log ((\exp(Y_k),(\exp(Y_i),\exp(Y_j))) = \log((\exp(Y_k),\exp(Y_{ij}))) = Y_{k,ij}.
\end{equation}
First, we show that $\mathcal{Y}$ is a strong Mal'cev basis. Let $j$ be arbitrary, let $\ell(j)$ denote the layer of $Y_j$ and set $\g_j = \mathrm{Span}_{\R}\{Y_j,\dots, Y_{30}\}$. From the construction $V_{>\ell(j)}:=\bigoplus_{i>\ell(j)} V_i\subseteq \g_j.$ Since the basis $\Y$ is obtained from the standard basis $\mathcal{E}$ by adding to the second layer some components that lie in the third layer, we see that $\g_j = U_j\oplus V_{>{\ell}(j)}$ where $U_j\subseteq V_{\geq\ell(j)}$ is spanned by $\{Y_i : \ell(i)=\ell(j)\land i\geq j\}\subseteq V_{\ell(j)}$. Since $[V_a,V_b]\subseteq V_{a+b}$, we see that
$$[\f,\g_j] = [\f,U_j]+[\f,V_{>{\ell}(j)}] \subseteq V_{>{\ell(j)}} \subseteq \g_{j}$$ where the last inclusion follows from the fact that $\Y$ is ordered so that the elements in the layers of weight $>\ell(j)$ come after $j$. We conclude that $\g_j$ is indeed an ideal as required.
\end{proof}
\begin{theorem}[The \emph{second-kind coordinates}]\label{secondcoor:thm}
The ordered basis $\Y$ is adapted to $\Gamma$. In particular, the map 
$$\tau:\R^{30}\to G \qquad \tau(s)=\prod_{i=1}^{30} \exp(s_iY_i)$$ is a bijection, and the group law $\mu:\R^{30}\times \R^{30}\rightarrow \R^{30}$
\begin{equation}\label{law:def}
\mu(x,x') = \tau^{-1}(\tau(x)\ast \tau(x'))
\end{equation}
is a polynomial.
\end{theorem}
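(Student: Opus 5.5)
The plan is to prove the three claims in order: $\tau$ is a bijection, the group law $\mu$ is polynomial, and $\Y$ is adapted to $\Gamma$.

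\emph{Bijectivity and polynomiality.} Since $\Y$ is a strong Mal'cev basis (previous lemma), $\tau$ is a bijection by a layer-by-layer computation. By BCH, $\tau(s)$ has $V_1$-component $\sum_{i\le 4}s_iY_i$. Its $V_2$-component is $\sum s_{ij}e_{ij}$ plus a quadratic expression in $s_1,\dots,s_4$. Its $V_3$-component is $\sum s_{k,ij}e_{k,ij}$ plus a polynomial in the lower coordinates. This triangular structure lets us solve for $s$ given any $X\in G$, one layer at a time, and it shows that $\tau$ and $\tau^{-1}$ are both polynomial. Since $\ast$ is polynomial (BCH), $\mu=\tau^{-1}\circ\ast\circ(\tau\times\tau)$ is polynomial as well.

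\emph{Adaptedness.} Let $\Lambda=\tau(\Z^{30})$. The goal is $\Lambda=\Gamma$.

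First, $\Lambda\subseteq\Gamma$. By \eqref{Yij} and \eqref{Ykij}, we have $\exp(Y_{ij})=(e_i,e_j)\in\Gamma$ and $\exp(Y_{k,ij})=(e_k,(e_i,e_j))\in\Gamma$. Each $\exp(nY)$ is a power of $\exp(Y)$, so every factor of $\tau(s)$ with $s\in\Z^{30}$ lies in $\Gamma$.

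Second, $\Gamma\subseteq\Lambda$. It suffices to show that $\Lambda$ is a subgroup: it contains the generators $e_1,\dots,e_4$, so it would then contain $\Gamma$. For this it is enough that $\mu(\Z^{30}\times\Z^{30})\subseteq\Z^{30}$ and that $\tau^{-1}(\tau(s)^{-1})\in\Z^{30}$ for $s\in\Z^{30}$. I would argue this without computing $\mu$ explicitly, by a collection process.
\begin{itemize}
\item The elements $\exp(Y_{k,ij})$ are central. They also span $\exp(V_3)$, and the ordered product of their integer powers is additive, so the third-layer part is an abelian lattice.
\item The commutator of two second-layer elements vanishes, and by \eqref{Ykij} the commutator of a first-layer generator with a second-layer basis element is a third-layer basis element with integer exponent. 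Hence $\exp(aY_{ij})$ commutes with $\exp(bY_{kl})$ modulo $V_3$. More precisely, $\exp(aY_{ij})\ast\exp(bY_{kl})=\exp(aY_{ij}+bY_{kl})$, because $[Y_{ij},Y_{kl}]\in\f_4=0$. So the second layer also multiplies additively, modulo central corrections.
\item Commuting $\exp(bY_k)$ past $\exp(aY_{ij})$ produces $\exp(abY_{k,ij})$ by \eqref{Ykij}, since the commutator is bilinear at this depth.
\item Commuting $\exp(bY_i)$ past $\exp(aY_j)$ with $i>j$ produces $(\exp(bY_i),\exp(aY_j))$. Using the commutator identity $(g^a,h^b)$ expressed by Hall--Petresco type formulas, this equals $\exp(-abY_{ji})$ (up to sign convention) times third-layer terms with coefficients like $a\binom{b}{2}$ and $b\binom{a}{2}$, which are integers.
\end{itemize}
Rewriting $\tau(s)\ast\tau(s')$ in ordered form then only ever introduces integer exponents.

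The main obstacle is this last commutator collection for first-layer elements. Concretely, I would verify directly with BCH that
$$\exp(bY_i)\ast\exp(aY_j)=\exp(aY_j)\ast\exp(bY_i)\ast\exp(\pm abY_{ij})\ast\prod\exp(c\,Y_{k,ij})$$
with $c\in\{\binom a2 b,\ a\binom b2,\ ab^2,\dots\}\subseteq\Z$. The half-integer terms $\tfrac12 a^2b$ arising from BCH should be absorbed exactly by the $\tfrac12(e_{i,ij}+e_{j,ij})$ correction built into $Y_{ij}$, leaving binomial coefficients. Inverses are handled the same way, or by noting that $\Lambda$ closed under multiplication and having the form ordered-product-of-integer-powers implies closure under inverses by collecting $\tau(s)^{-1}=\exp(-s_{30}Y_{30})\cdots\exp(-s_1Y_1)$.

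Finally, discreteness and cocompactness of $\Gamma=\tau(\Z^{30})$ would follow from the bijection $\tau$ being a homeomorphism, with fundamental domain $\tau([0,1)^{30})$. That is deferred to Section~\ref{FD:section}.
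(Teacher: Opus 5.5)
Your proposal is correct, and it uses the same ingredients as the paper, but it organizes them differently in two places.

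\textbf{Bijectivity and polynomiality.} The paper works in the group, modulo $G_2$ and then modulo $G_3$. Along the way it extracts the explicit shape of the law, \eqref{mu1}--\eqref{mu3}, with $Q$ independent of $x'^{(2)}$ and $Q(\cdot,\cdot,0)=0$. You instead observe that $s\mapsto\log\tau(s)$ is unitriangular with a polynomial off-diagonal part. Hence $\tau^{-1}$ is polynomial, and $\mu$ is a composition of polynomials. This is shorter and fully sufficient for the statement. However, it does not produce the properties of $P$ and $Q$ that the paper later quotes from this proof (in Lemmas~\ref{lem:reduceFD}, \ref{lem:identification} and \ref{Psinlemma}), so those would have to be derived separately.

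\textbf{Adaptedness.} The paper actually proves this in Theorem~\ref{adaptedbasis}. It inducts on word length and only shows $\tau(s)\ast e_a^{\pm1}\in\tau(\Z^{30})$, so the only first-layer commutators it needs are $(e_b^{s_b},e_a^{\pm1})$. You prove closure of $\tau(\Z^{30})$ under arbitrary products, which requires the general commutator. Your prediction about it is right: for $i>j$,
$\log(\exp(bY_i),\exp(aY_j))=-abY_{ji}-a\binom{b}{2}e_{i,ji}-b\binom{a}{2}e_{j,ji}$.
The $\tfrac12$-correction built into $Y_{ji}$ absorbs the half-integers exactly. Your route gives $\mu(\Z^{30}\times\Z^{30})\subseteq\Z^{30}$ directly, whereas the paper deduces it from $\tau(\Z^{30})=\Gamma$.

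\textbf{Two small points.}
\begin{enumerate}
\item For $k<i$, the bracket $[e_k,e_{ij}]$ is not a single Hall element. By Jacobi it equals $e_{i,kj}-e_{j,ki}$, which is still an integral combination, so nothing breaks.
\item You do not need closure under inverses. Each $e_a^{-1}=\exp(-Y_a)$ is already $\tau$ of an integer vector, and every element of $\Gamma$ is a word in the $e_a^{\pm1}$, so closure under multiplication suffices.
\end{enumerate}
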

\begin{proof}
Let $G=G_1\supseteq G_2\supseteq G_3\supseteq G_4=\{e\}$ be the lower central series, so that
$G_2=[G,G]=\exp(V_2\oplus V_3)$ and $G_3=[G,G_2]=\exp(V_3)$. Thus, $G_i/G_{i+1}\cong V_i$.
For $s\in\R^{30}$, write $s=(s^{(1)},s^{(2)},s^{(3)})\in\R^4\times\R^6\times\R^{20}$ for the layer-coordinates of $s$, and note that
\[
\tau(s)=\tau_1(s^{(1)})*\tau_2(s^{(2)})*\tau_3(s^{(3)}),
\]
where $\tau_i$ is the restriction of $\tau$ to the $i$-th layer (the other layers set to $0$). By
\eqref{Yij}--\eqref{Ykij} each $\tau_i$ takes values in $G_i$. Next, we analyze $\tau$ and $\mu$ layer by layer.\\
\noindent\emph{Layer $1$.} Modulo $G_2$ we have $\tau(s)\equiv\tau_1(s^{(1)})$, and in $G/G_2$,
$\tau_1(s^{(1)})=\exp(s_1Y_1+s_2Y_2+s_3Y_3+s_4Y_4)$ (because the corrections from the Baker--Campbell--Hausdorff formula lie in $G_2$). Hence
$\tau_1\mod G_2:\R^4\to G/G_2$ is a linear bijection. Since $G/G_2$ is abelian,
$$\tau(x)*\tau(x')\equiv\exp\left(\sum_a(x_a+x'_a)Y_a\right)\pmod{G_2},$$ so
\begin{equation}\label{mu1}
\mu^{(1)}(x,x')=x^{(1)}+x'^{(1)},
\end{equation}
which is linear.\\
\noindent\emph{Layer $2$.} We work modulo $G_3$. Since $G_2$ is abelian, we have $\tau_2(s^{(2)})=\exp\left(\sum_{1\leq i<j\leq 4}s_{ij}e_{ij}\right)$. Now, since  $Y_{ij}\equiv e_{ij}\pmod{\f_3}$, we conclude that
$\tau_2 \mod G_3:\R^6\to G_2/G_3$ is a bijection. Define $\tau^{<3}:\R^{10}\to G/G_3$ by
$$\tau^{<3}(s^{(1)},s^{(2)})=\tau_1(s^{(1)})*\tau_2(s^{(2)})\pmod{G_3}$$

We prove that $\tau^{<3}$ is a bijection. Let $g\in G/G_3$ and consider its image $\overline{g}=gG_2\in G/G_2$. From the surjectivity of $\tau_1 \mod{G_2}$, $\overline{g} = \tau_1(x) \pmod{G_2}$ for a unique $x\in\R^4$ (Layer~$1$). In particular, $u:=\tau_1(x)^{-1}*g\in G_2/G_3$. Now, by the surjectivity of
$\tau_2 \mod G_3$, there is some $y\in\R^6$ with $\tau_2(y)=u \pmod{G_3}$. Therefore, $\tau^{<3}(x,y)=g$, proving that $\tau^{<3}$ is onto. Next, suppose that $\tau^{<3}(x,y)=\tau^{<3}(x',y')$. Quotienting by $G_2$ gives $x=x'$, from the injectivity of Layer~$1$. Then, cancelling $\tau_1(x)$
and using injectivity of $\tau_2$ gives $y=y'$.

For the group law, $G/G_3$ is $2$-step, so $G_2/G_3$ is central in $G/G_3$. Therefore,  (modulo $G_3$) we have,
\[
\tau(x)*\tau(x')
\equiv\tau_1(x^{(1)})*\tau_2(x^{(2)})*\tau_1(x'^{(1)})*\tau_2(x'^{(2)})
\equiv\tau_1(x^{(1)})*\tau_1(x'^{(1)})*\tau_2(x^{(2)}+x'^{(2)}),
\]
using that $\tau_2(x^{(2)})$ is central and $\tau_2$ additive mod $G_3$. The group commutator satisfies
$(\exp Y_i,\exp Y_j)=\exp(Y_{ij})$, so modulo $G_3$ (where $Y_{ij}\equiv e_{ij}$),
$$
\exp(x_iY_i)*\exp(x'_jY_j)\equiv\exp(x'_jY_j)*\exp(x_iY_i)*\exp(x_ix'_j\,Y_{ij})\qquad(i<j).
$$
Next, we re-order $\tau_1(x^{(1)})*\tau_1(x'^{(1)})$ by moving each primed factor to its unprimed partner. Each step contributes a layer~$2$ bilinear form (and since we work modulo $G_3$ there are no trilinear forms involved). Thus,  
\begin{equation}\label{mu2}
\mu^{(2)}(x,x')=x^{(2)}+x'^{(2)}+P(x^{(1)},x'^{(1)}),\qquad P_{ij}=-x_j\,x'_i\ \ (i<j),
\end{equation}
with $P$ a bilinear form (hence a quadratic polynomial in the coordinates of $x^{(1)},x'^{(1)}$).\\
\noindent\emph{Layer $3$.} Now $G_3$ is central, and $\tau_3=\exp:\R^{20}\xrightarrow{\ \sim\ }V_3=G_3$
is a (linear) bijection. We finally complete the proof that $\tau$ is a bijection. Let $g\in G$, from the layer~2 analysis, there is a unique $(x^{(1)},x^{(2)})\in \R^{10}$ so that $u:=\tau^{<3}(x^{(1)},x^{(2)})^{-1}\ast g \in G_3$. Since $\tau_3$ is surjective, we can find a unique $x^{(3)}\in \R^{20}$ with $\tau_3(x^{(3)})=u$. We conclude that $x=(x^{(1)},x^{(2)},x^{(3)})\in \R^{30}$ is unique with $\tau(x)=g$, as required.\\
\noindent To compute $\mu^{(3)}$ we re-order $\tau(x)\ast\tau(x')$ into the standard form
$\tau_1(z^{(1)})\,\tau_2(z^{(2)})\,\tau_3(z^{(3)})$, keeping track of the weight-$3$ contributions.
Since $G_3$ is central, the factors $\tau_3(x^{(3)})$ and $\tau_3(x'^{(3)})$ commute with everything
and may be collected on the right, contributing the linear term $x^{(3)}+x'^{(3)}$. Namely, 
\[
\tau(x)\ast\tau(x') = \tau_1(x^{(1)})\,\tau_2(x^{(2)})\,\tau_1(x'^{(1)})\,\tau_2(x'^{(2)})
\cdot\tau_3\left(x^{(3)}+x'^{(3)}\right).
\]
We move $\tau_1(x'^{(1)})$ to the left past $\tau_2(x^{(2)})$. As $[V_1,V_2]\subseteq V_3$ and $G_3$ is
central,
\[
\tau_2(x^{(2)})\,\tau_1(x'^{(1)}) = \tau_1(x'^{(1)})\,\tau_2(x^{(2)})\cdot c,\qquad c\in G_3,
\]
where $c$ is \emph{bilinear} in $(x^{(2)},x'^{(1)})$, giving rise to a $V_2\times V_1\to V_3$ contribution. Since
$(Y_k,Y_{ij})=Y_{k,ij}$ and $(Y_{ij},Y_k)=-Y_{k,ij}$, this map is integer-valued. After this move the
two layer-$2$ factors are adjacent, and because $[V_2,V_2]\subseteq V_4=\{0\}$ they commute and
combine with no weight-$3$ correction, $\tau_2(x^{(2)})\tau_2(x'^{(2)})=\tau_2(x^{(2)}+x'^{(2)})$. In
particular, $x'^{(2)}$ produces no weight-$3$ term. It remains to re-order
$\tau_1(x^{(1)})\tau_1(x'^{(1)})$. Its weight-$3$ contributions arise from the weight-$3$ part of the
two-term commutator and from the three-term commutator, and are functions of $(x^{(1)},x'^{(1)})$
alone, contributing a bilinear $V_1\times V_1\to V_3$ and a trilinear $V_1\times V_1\times V_1\to V_3$.
Collecting the three contributions,
\begin{equation}\label{mu3}
\mu^{(3)}(x,x') = x^{(3)}+x'^{(3)}+Q\bigl(x^{(1)},x^{(2)},x'^{(1)}\bigr),
\end{equation}
a degree-$3$ polynomial in the coordinates of $x^{(1)},x^{(2)},x'^{(1)}$. By construction $Q$ enjoys
the following properties, used throughout:
\begin{itemize}
\item[(a)] $Q$ does not depend on $x'^{(2)}$, since $x'^{(2)}$ crosses no layer-$1$ factor.
\item[(b)] $\mu^{(3)}$ is affine in $x'^{(3)}$ with coefficient $1$. In particular, $Q$ is
independent of $x'^{(3)}$.
\item[(c)] $Q(x^{(1)},x^{(2)},0)=0$. When $x'^{(1)}=0$ the factor
$\tau(x')=\tau_2(x'^{(2)})\tau_3(x'^{(3)})$ has no layer-$1$ part, so it crosses no layer-$1$ factor
and contributes no weight-$3$ term. Equivalently, every term of $Q$ is at least linear in $x'^{(1)}$.
\end{itemize}
\end{proof}
\begin{remark}
    The properties of $P$ and $Q$ established above will be of use throughout the paper. These properties explain the majority of our arguments, however, at some point we will have to compute both $P$ and $Q$ explicitly in full. The latter is an enormous computation that was established via a computer code. The algorithm can be extracted from our proof above, see also Appendix~\ref{app:cert}.
\end{remark}
\begin{theorem}[The basis is adapted to $\Gamma$]\label{adaptedbasis}
    The law $\mu$ introduced in the previous theorem is integer-valued (i.e., $\mu(\Z^{30}\times \Z^{30})\subseteq \Z^{30}$). Equivalently, $\Y$ is adapted to $\Gamma$ (i.e., $\tau(\Z^{30})=\Gamma$.)
\end{theorem}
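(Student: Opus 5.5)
The plan has two parts: first show that $\mu$ is integer-valued, then deduce $\tau(\Z^{30})=\Gamma$.

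\emph{Integrality of $\mu$.} By \eqref{mu1}, \eqref{mu2} and \eqref{mu3}, $\mu=(\mu^{(1)},\mu^{(2)},\mu^{(3)})$ with $\mu^{(1)}$ linear and $\mu^{(2)}=x^{(2)}+x'^{(2)}+P$, where $P_{ij}=-x_jx'_i$. These two components are visibly integer-valued on $\Z^{30}\times\Z^{30}$. It remains to show $Q(\Z^4,\Z^6,\Z^4)\subseteq\Z^{20}$. The proof of Theorem~\ref{secondcoor:thm} splits $Q$ into two pieces.
\begin{itemize}
\item[(i)] The piece from moving $\tau_1(x'^{(1)})$ past $\tau_2(x^{(2)})$. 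Here $G_2/G_3$-type elements $\tau_2(x^{(2)})=\prod\exp(x_{ij}Y_{ij})$ are commuted with $\exp(x'_kY_k)$. Since $G_3$ is central and the commutator is bilinear at this level, this produces $\prod \exp(\pm x_{ij}x'_kY_{k,ij})$ by \eqref{Ykij}. This is integral in the $Y_{k,ij}$-coordinates.

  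One caveat: when $k<i$, the element $e_{k,ij}$ is not a Hall basis element. It is rewritten through the Jacobi identity $e_{k,ij}=[[e_k,e_i],e_j]-e_{i,jk}$, i.e.\ as an integer combination of Hall elements (for instance $e_{1,23}=e_{2,13}-e_{3,12}$, with the appropriate signs). This keeps integrality because the Hall basis is an integral basis of the $\Z$-span of all $e_{k,ij}$.
\item[(ii)] The piece from reordering $\tau_1(x^{(1)})\tau_1(x'^{(1)})$ into $\tau_1(x^{(1)}+x'^{(1)})\tau_2(\cdot)\tau_3(\cdot)$. I would not expand BCH with its $\tfrac12,\tfrac1{12}$ coefficients. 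Instead I would argue at the level of the discrete group. For integer inputs, every factor $\exp(nY_a)=\exp(Y_a)^n$ lies in $\Gamma$. We then use collection: repeatedly apply the exact identities $\exp(Y_j)\exp(Y_i)=\exp(Y_i)\exp(Y_j)\exp(Y_{ij})^{-1}\cdot(\text{central correction})$ and their power versions, which follow from \eqref{Yij} and \eqref{Ykij}. Each swap produces only integer powers of $\exp(Y_{ij})$ and $\exp(Y_{k,ij})$. After rewriting via Jacobi as in (i), the exponents land in $\Z$.
\end{itemize}

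Concretely, I would prove the following statement $(\ast)$: for $m,n\in\Z$ and $i<j$,
\[
\exp(nY_j)\exp(mY_i)=\exp(mY_i)\exp(nY_j)\exp(-mnY_{ij})\,\exp\Bigl(\tbinom{m}{2}n\,c_1+m\tbinom{n}{2}c_2\Bigr),
\]
with $c_1,c_2$ integer combinations of the $Y_{k,ij}$. This is the Hall--Petresco formula in class $3$, derivable by induction on $m,n$ from the single-step commutators \eqref{Yij}. The binomial coefficients are integers, so integrality follows. All remaining swaps then involve central or layer-2 elements, which are handled by (i).

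\emph{Deducing $\tau(\Z^{30})=\Gamma$.} Integrality gives $\mu(\Z^{30}\times\Z^{30})\subseteq\Z^{30}$. Inversion is also integral: solve $\mu(x,y)=0$ layer by layer to get $y^{(1)}=-x^{(1)}$, $y^{(2)}=-x^{(2)}-P$ and $y^{(3)}=-x^{(3)}-Q$, all integral. Hence $\tau(\Z^{30})$ is a subgroup. It contains the generators $e_i=\tau(\text{unit vector})$, so $\Gamma\subseteq\tau(\Z^{30})$. Conversely, each $\tau(s)$ with $s$ integral is a product of $\exp(Y_a)^{s_a}$. By \eqref{Yij} and \eqref{Ykij}, $\exp(Y_{ij})$ and $\exp(Y_{k,ij})$ are iterated group commutators of the generators, hence lie in $\Gamma$. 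So $\tau(\Z^{30})\subseteq\Gamma$.

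The main obstacle is (ii), specifically verifying $(\ast)$ with integer (not half-integer) third-layer coefficients. The correction $\tfrac12(e_{i,ij}+e_{j,ij})$ built into $Y_{ij}$ is exactly what cancels the $\tfrac12$ in BCH, so I would check $(\ast)$ directly for $m=n=1$ via the commutator formula already computed, then induct. If bookkeeping gets heavy, I would fall back on the computer verification of $Q$ from Appendix~\ref{app:cert}.
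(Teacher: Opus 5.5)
Your proof is correct in substance, but it runs in the opposite direction to the paper's. The paper proves $\tau(\Z^{30})\subseteq\Gamma$ exactly as you do. For $\Gamma\subseteq\tau(\Z^{30})$ it inducts on word length and shows only that $\tau(s)\ast e_a^{\varepsilon}\in\tau(\Z^{30})$ for $s\in\Z^{30}$ and $\varepsilon=\pm1$. Integrality of $\mu$ then follows in one line from $\Gamma$ being a group.

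The paper therefore only needs commutators $(e_b^{s_b},e_a^{\pm1})$, where one exponent is $\pm1$, together with central brackets. You instead need a two-parameter collection formula for $\exp(nY_j)\exp(mY_i)$, plus a separate check of closure under inverses. That check can be skipped, since $e_a^{-1}=\exp(-Y_a)$ already lies in $\tau(\Z^{30})$ and closure under products then gives every word. In exchange, your route shows explicitly that the polynomials $P$ and $Q$ are integer-valued on integer points. You also need neither Hall--Petresco nor induction: the identity $\log(\exp X,\exp Y)=[X,Y]+\frac12[X+Y,[X,Y]]$ from the proof that $\Y$ is a strong Mal'cev basis applies verbatim with $X=nY_j$, $Y=mY_i$ for arbitrary $m,n$.

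You do need to fix $(\ast)$, because as written it is false. For $m=n=1$ it asserts $\exp(Y_j)\exp(Y_i)=\exp(Y_i)\exp(Y_j)\exp(-Y_{ij})$, but the commutator identity produces an extra central factor $\exp(e_{i,ij}+e_{j,ij})$. With the correction placed on the right, the correct formula is
\[
\exp(nY_j)\exp(mY_i)=\exp(mY_i)\exp(nY_j)\exp(-mnY_{ij})\exp\Bigl(n\binom{m+1}{2}e_{i,ij}+m\binom{n+1}{2}e_{j,ij}\Bigr).
\]
Your binomials $n\binom{m}{2}$ and $m\binom{n}{2}$ (with $c_1=-e_{i,ij}$, $c_2=-e_{j,ij}$) are correct only if the factor $(\exp(nY_j),\exp(mY_i))$ is put on the left, i.e.\ $\exp(nY_j)\exp(mY_i)=(\exp(nY_j),\exp(mY_i))\exp(mY_i)\exp(nY_j)$. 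For $m=\varepsilon$ this is exactly the paper's $w(b)$ computation. All coefficients are integers in either placement, so your conclusion stands.
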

\begin{proof}
Suppose first that we have already established that $\tau(\Z^{30})=\Gamma$. Let $x,x'\in \Z^{30}$. Since $\Gamma$ is a group, $\tau(x)\ast\tau(x')\in \Gamma$ and thus, $\mu(x,x') = \tau^{-1}(\tau(x)\ast\tau(x'))\in \Z^{30}.$ Therefore, it suffices to show that $\tau(\Z^{30})=\Gamma$. We begin with the inclusion $\tau(\Z^{30})\subseteq \Gamma$. First, since $Y_{ij}=(Y_i,Y_j)$ and $Y_{k,ij}=(Y_k,Y_{ij})$, every element in $\Y$ is in $\Gamma$. Furthermore, for each $1\leq j \leq 30$, since different powers $\exp(tY_j)$ and $\exp(sY_j)$ commute for all $t,s\in \Z$, we see that $t\mapsto \exp(tY_j)$ is a homomorphism and thus its image is in $\Gamma$. Thus, every element in the image of $\tau$ is a product of elements in $\Gamma$. Since $\Gamma$ is a group we get the desired inclusion. Next, we show that $\Gamma\subseteq \tau(\Z^{30})$.  Every $\gamma\in \Gamma$ is a finite product $\gamma = e_{a_1}^{\varepsilon_1}\ast\dots\ast e_{a_r}^{\varepsilon_r}$ for some $r\in \mathbb{N}$, $1\leq a_1,\dots,a_r\leq 4$ and $\varepsilon_i\in \{-1,1\}.$ Inducting on $r$ (when $r=0$ we get $e=\tau(0)$), it suffices to show that $\tau(s)\ast e_a^{\varepsilon}\in \tau(\Z^{30})$ whenever $s\in \Z^{30}$, $a\in\{1,2,3,4\}$ and $\varepsilon\in\{-1,1\}.$ 

Since $\f_3$ is central, we have
$$\tau(s)\ast e_a^{\varepsilon} = \tau_1(s^{(1)})\ast \tau_2(s^{(2)})\ast \tau_3(s^{(3)}) \ast \exp(\varepsilon e_a)  =\tau_1(s^{(1)})\ast \tau_2(s^{(2)})\ast \exp(\varepsilon e_a)\ast\tau_3(s^{(3)}).$$
Next, from a similar computation as \eqref{Yij}, we have
$\log((\exp(s_{ij}Y_{ij}),\exp(\varepsilon Y_a)))  = -\log ((\exp (\varepsilon Y_a),(\exp(s_{ij}Y_{ij})) = -s_{ij}\cdot \varepsilon\cdot Y_{a,ij} $

Since this commutator is central, we can move the commutators to the right while moving $\exp(\varepsilon e_a)$ to the left through all of $\tau_2(s^{(2)})$. We see that
$$\tau(s)\ast e_a^\varepsilon = \tau_1(s^{(1)})\exp(\varepsilon e_a)\tau_2(s^{(2)})\cdot \tau_3(s^{(3)})\cdot \tau_3(c_1)$$ where $c_1 = -\varepsilon \sum_{1\leq i<j\leq 4} s_{ij} Y_{a,ij},$ and we see that the coefficients in $c_1$ are integers. 

It remains to analyze the layer~$1$ product $$\tau_1(s^{(1)})*\exp(\varepsilon e_a) = \exp(s_1Y_1)\exp(s_2Y_2)\exp(s_3Y_3)\exp(s_4Y_4) \ast \exp(\varepsilon e_a).$$ If $a=4$, we can use $\exp(s_4Y_4) \ast \exp(\varepsilon Y_a) = \exp((s_4+\varepsilon)Y_4)$ and complete the proof since all the coordinates are now integer valued. Otherwise, we have to move $\exp(\varepsilon Y_a)$ to its sorted slot (past $\exp(s_bY_b)$ for $b>a$). This leaves a factor in $G_2$, denoted by $C$. The factor $C$ is a product of two types of commutators which we classify here:
\begin{enumerate}
    \item[(i)] The first type commutator $(\exp(s_bY_b),\exp(\varepsilon Y_a))$ for $b>a.$
    \item [(ii)] The second type commutator is obtained when we move a commutator of the first type to the right. It takes the form
    $((\exp(s_bY_b),\exp(\varepsilon Y_a)),\exp(s_cY_c))$ with $a<c<b$.
\end{enumerate}
Since the terms $(ii)$ are central, they can be freely moved to the right without additional contributions. It is easy to see that the contributions of the form $(ii)$ are integer valued, because they arise from an iterated bracket of length $3$. Indeed, as in \eqref{Ykij}, we see that
$\log ((\exp(s_bY_b),\exp(\varepsilon Y_a)),\exp(s_cY_c)) = -(s_b\cdot s_c\cdot \varepsilon) \cdot Y_{c,ab}$, and the coefficient is an integer. The contributions of the form $(i)$ are more complicated because we need to study their weight $3$ layer. To do so we may reduce matters to the subgroup generated by $e_a$ and $e_b$ (here $a,b\leq 4$). Using the formula, $$\log (\exp X,\exp Y) = \log(\exp X\ast \exp Y\ast \exp(-X)\ast \exp(-Y)) = [X,Y]+\frac{1}{2}[X+Y,[X,Y]],$$ we may then compute the commutator and find some $w(b)$ with
$(e_b^{s_b},e_a^{\varepsilon}) = \tau(w(b))$. We can see that this holds if we take $w(b)$ whose $(ab)$-coordinate (second layer) is $-\varepsilon \cdot s_b$, whose $(b,ab)$ - coordinate (third layer) is $-\varepsilon \binom{s_b}{2}$ and whose $(a,ab)$-coordinate is $\frac{s_b (\varepsilon -1)}{2}$, and all other coordinates are $0$. Since every coordinate is an integer, this completes the proof.
\end{proof}
\begin{remark}[Distinguishing between the first and second-kind coordinates]\label{firsttosecond}
    Given $g\in G$, we can associate $g$ with an element in the Lie algebra $\f=\R^{30}$ in two distinct manners. The first-kind coordinates system is associated with the map $\log : G\rightarrow \R^{30}$, which is the inverse of the $\exp$ map (in our context this is the identity map). These coordinates are useful for certain computations. However, ultimately, we seek to understand our objects in the second-kind coordinates system which is associated with the map $\tau^{-1}:G\rightarrow \R^{30}$ from Theorem~\ref{secondcoor:thm}.
\end{remark}
\subsection{Reduction to the fundamental domain via generalized-polynomial coordinates.}\label{FD:section}
Since $\tau$ is smooth, we see that $\Gamma=\tau(\Z^{30})$ is closed and discrete in $G$. In this section we show that the fundamental domain is $\mF=\tau([0,1)^{30})$ (thus also proving Theorem~\ref{nilmanifold}). In fact, we prove a stronger result, we show that the map $\mathrm{red}:G \to \mF$ which assigns every element $g\in G$ to its representative in the fundamental domain corresponds to a map $\tred:\R^{30}\rightarrow [0,1)^{30}$ in the second-kind coordinates so that the following diagram commutes:
$$
\begin{tikzcd}[column sep=2.4cm, row sep=1.8cm]
\R^{30}
  \arrow[r, "\tau"]
  \arrow[d, "\widetilde{\mathrm{red}}"']
& G
  \arrow[d, "\mathrm{red}"]
\\
{[0,1)^{30}}
  \arrow[r, "\tau"']
& \mF
\end{tikzcd}
$$
Furthermore, for $s\in \R^{30}$, $\tred(s)$ is a generalized polynomial in the coordinates of $s = (s_1,\dots,s_{30})$ (see \cite{BLgeneralized}). Namely, it is obtained by adding the floor function $\lfloor\cdot\rfloor:\R\rightarrow \Z$ to one or more of the coordinates of $s$. (e.g., $s_1 \cdot \lfloor s_2 + s_3\lfloor s_4\rfloor\rfloor$ is a generalized polynomial.). 
\begin{lemma}[Layered reduction]\label{lem:reduceFD}
Let $g\in G$. There exists a unique $\gamma\in\Gamma$ such that $g*\gamma\in\mF$. This defines the
reduction map
\[
\mathrm{red}\colon G\to\mF,\qquad \mathrm{red}(g):=g*\gamma .
\]
Moreover, the pullback along the coordinate map $\tau\colon\R^{30}\to G$:
\[
\widetilde{\mathrm{red}}:=\tau^{-1}\circ \mathrm{red}\circ\tau\colon\R^{30}\to[0,1)^{30},\qquad
\widetilde{\mathrm{red}}(s):=\tau^{-1}\left(\mathrm{red}\bigl(\tau(s)\bigr)\right)
\]
is a generalized polynomial in the
coordinates $s=(s^{(1)},s^{(2)},s^{(3)})$.
\end{lemma}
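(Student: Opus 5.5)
We plan to work entirely in second-kind coordinates. By Theorem~\ref{adaptedbasis}, right multiplication by $\gamma=\tau(m)$ with $m\in\Z^{30}$ becomes $s\mapsto \mu(s,m)$, with $\mu$ as in \eqref{mu1}, \eqref{mu2}, \eqref{mu3}. So we must show two things. First, for every $s\in\R^{30}$ there is a unique $m\in\Z^{30}$ with $\mu(s,m)\in[0,1)^{30}$. Second, $s\mapsto\mu(s,m(s))$ is a generalized polynomial. We determine $m=(m^{(1)},m^{(2)},m^{(3)})$ layer by layer, using the triangular structure of $\mu$.

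\emph{Layer $1$.} By \eqref{mu1}, $\mu^{(1)}(s,m)=s^{(1)}+m^{(1)}$. This lies in $[0,1)^4$ if and only if $m^{(1)}=-\lfloor s^{(1)}\rfloor$ coordinatewise. So $m^{(1)}$ is forced, and the first layer of $\tred(s)$ is $\{s^{(1)}\}=s^{(1)}-\lfloor s^{(1)}\rfloor$.

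\emph{Layer $2$.} By \eqref{mu2}, $\mu^{(2)}(s,m)=s^{(2)}+m^{(2)}+P(s^{(1)},m^{(1)})$, where $P$ is bilinear with integer coefficients and does not depend on $m^{(2)}$. With $m^{(1)}$ already fixed, set $u:=s^{(2)}+P(s^{(1)},-\lfloor s^{(1)}\rfloor)$. The unique admissible choice is $m^{(2)}=-\lfloor u\rfloor$, and the second layer of $\tred(s)$ is $u-\lfloor u\rfloor$. Each entry of $u$ is a polynomial in $s$ and $\lfloor s_i\rfloor$, so this is a generalized polynomial.

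\emph{Layer $3$.} By \eqref{mu3} and properties (a)--(b), $\mu^{(3)}(s,m)=s^{(3)}+m^{(3)}+Q(s^{(1)},s^{(2)},m^{(1)})$. The third layer of $m$ enters additively with coefficient $1$, and the earlier layers of $m$ enter only through $m^{(1)}$, which is already determined. Setting $v:=s^{(3)}+Q(s^{(1)},s^{(2)},-\lfloor s^{(1)}\rfloor)$, we are forced to take $m^{(3)}=-\lfloor v\rfloor$, and the third layer of $\tred(s)$ is $v-\lfloor v\rfloor$. Again this is a generalized polynomial, since $Q$ is a polynomial.

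At each layer the choice of $m$ was forced, so $m$ exists and is unique. Since $\tau$ is a bijection (Theorem~\ref{secondcoor:thm}) and $\Gamma=\tau(\Z^{30})$, this gives existence and uniqueness of $\gamma=\tau(m)$, and hence the reduction map $\mathrm{red}$. It also shows that $\mF$ is a fundamental domain, and $\mF$ has compact closure $\tau([0,1]^{30})$. Combined with the discreteness of $\Gamma$, this proves Theorem~\ref{nilmanifold}.

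The main point to check is that the dependence really is triangular. Concretely, the lower-layer coordinates of $\mu(s,m)$ must not depend on the higher-layer coordinates of $m$, and each layer of $m$ must enter its own layer with coefficient exactly $1$ (no integer multiple other than $\pm1$). Otherwise uniqueness could fail, or the floor corrections would not be exact. Both facts are read off from \eqref{mu1}--\eqref{mu3} and properties (a)--(b). I expect this to be the only delicate step; the rest is bookkeeping.
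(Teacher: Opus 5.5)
Your proof is correct and follows essentially the same layer-by-layer floor-correction argument as the paper, resting on the triangular form of $\mu$ in \eqref{mu1}--\eqref{mu3} together with properties (a)--(b). The only difference is organizational. You parametrize every candidate as $\gamma=\tau(m)$ with $m\in\Z^{30}$ and solve $\mu(s,m)\in[0,1)^{30}$ directly, so existence and uniqueness come out together. The paper instead multiplies successively by $\tau(\rho_1(s)),\tau(\rho_2(s)),\tau(\rho_3(s))$ to get existence, and then proves uniqueness separately by showing that $\tau^{-1}(\gamma^{-1}\ast\gamma')=0$ layer by layer.
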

 
\begin{proof}
Fix $g\in G$ and write $s=\tau^{-1}(g)=(s^{(1)},s^{(2)},s^{(3)})$ for its layer coordinates. We build
$\gamma$ one layer at a time, noting at each stage that the coordinates produced are generalized
polynomials in $s$.
 
\emph{Layer $1$.} Put $\rho_1(s):=(-\lfloor s^{(1)}\rfloor,0,0)$. Since $\rho_1(s)\in \Z^{30}$ we have that $\tau(\rho_1(s))\in\Gamma$, and
let $s':=\tau^{-1}\bigl(g*\tau(\rho_1(s))\bigr)=\mu\bigl(s,\rho_1(s)\bigr)$. From \eqref{mu1} we see that,
$s'^{(1)}=s^{(1)}-\lfloor s^{(1)}\rfloor\in[0,1)^4$, while from \eqref{mu2}--\eqref{mu3} the remaining
coordinates $s'^{(2)},s'^{(3)}$ are generalized polynomials in $s$.
 
\emph{Layer $2$.} Put $\rho_2(s):=(0,-\lfloor s'^{(2)}\rfloor,0)$ and let
$s'':=\tau^{-1}\bigl(g*\tau(\rho_1(s))*\tau(\rho_2(s))\bigr)=\mu\bigl(s',\rho_2(s)\bigr)$. As $\rho_2(s)$
has no layer-$1$ part, \eqref{mu1} gives $s''^{(1)}=s'^{(1)}\in[0,1)^4$. Moreover, since $P$ is bilinear with
$P(\cdot,0)=0$, \eqref{mu2} gives $s''^{(2)}=s'^{(2)}-\lfloor s'^{(2)}\rfloor\in[0,1)^6$. Thus, the
first ten coordinates of $s''$ lie in $[0,1)$, and $s''^{(3)}$ is a generalized polynomial in $s$.
 
\emph{Layer $3$.} Finally, put $\rho_3(s):=(0,0,-\lfloor s''^{(3)}\rfloor)$ and let
$s''':=\tau^{-1}\bigl(g*\tau(\rho_1(s))*\tau(\rho_2(s))*\tau(\rho_3(s))\bigr)=\mu\bigl(s'',\rho_3(s)\bigr)$.
Since $V_3$ is central, the arguments $\rho_3(s)^{(1)}=\rho_3(s)^{(2)}=0$ make the mixed term in
\eqref{mu3} vanish, so $\mu^{(3)}$ reduces to addition in the top layer: layers $1,2$ are unchanged and
$s'''^{(3)}=s''^{(3)}-\lfloor s''^{(3)}\rfloor\in[0,1)^{20}$. Hence $s'''\in[0,1)^{30}$, i.e.,
$g*\gamma\in\mF$ with $\gamma:=\tau(\rho_1(s))*\tau(\rho_2(s))*\tau(\rho_3(s))\in\Gamma$.

It is left to prove the uniqueness claim.  Suppose that $\gamma,\gamma'\in\Gamma$ both satisfy
$g\ast\gamma\in\mF$ and $g\ast\gamma'\in\mF$. Write $s=\tau^{-1}(g)$ and put
$t:=\tau^{-1}(g\ast\gamma)=\mu(s,\tau^{-1}(\gamma))$ and
$t':=\tau^{-1}(g\ast\gamma')=\mu(s,\tau^{-1}(\gamma'))$, both lying in
$[0,1)^{30}$ since $g\ast\gamma,g\ast\gamma'\in\mF$. Since
$g\ast\gamma'=(g\ast\gamma)\ast(\gamma^{-1}\ast\gamma')$ and
$\gamma^{-1}\ast\gamma'\in\Gamma=\tau(\Z^{30})$, the element
$w:=\tau^{-1}(\gamma^{-1}\ast\gamma')$ lies in $\Z^{30}$ and
$t'=\mu(t,w)$. We show $w=0$ layer by layer.

By \eqref{mu1}, $t'^{(1)}=t^{(1)}+w^{(1)}$ with $t^{(1)},t'^{(1)}\in[0,1)^4$ and
$w^{(1)}\in\Z^4$. Hence, $w^{(1)}=0$ and $t'^{(1)}=t^{(1)}$. As $P$ is bilinear with
$P(\cdot,0)=0$, \eqref{mu2} now gives $t'^{(2)}=t^{(2)}+w^{(2)}$ with
$t^{(2)},t'^{(2)}\in[0,1)^6$ and $w^{(2)}\in\Z^6$, forcing $w^{(2)}=0$ and
$t'^{(2)}=t^{(2)}$. Finally, since $w^{(1)}=0$, and $Q(\cdot,\cdot,0)=0$, the cross term in \eqref{mu3} vanishes, so
$t'^{(3)}=t^{(3)}+w^{(3)}$ with $t^{(3)},t'^{(3)}\in[0,1)^{20}$ and
$w^{(3)}\in\Z^{20}$, forcing $w^{(3)}=0$. Thus $w=0$, i.e.,
$\gamma^{-1}\ast\gamma'=e$ and $\gamma=\gamma'$.
\end{proof}

Next, we identify $G/\Gamma$ with the fundamental domain $\mathcal{F}$.
\begin{lemma}[A measure-theoretic isomorphism]\label{lem:identification}
Let $\pi\colon G\to X:=G/\Gamma$ denote the natural quotient map. The restrictions
\[
\Phi:=\tau^{-1}|_{\mF}\colon\mF\to[0,1)^{30},\qquad
\beta:=\pi|_{\mF}\colon\mF\to X
\]
are bijections. Moreover, the map $\Psi:=\Phi\circ\beta^{-1}\colon X\to[0,1)^{30}$ is a measure-theoretic isomorphism, where $[0,1)^{30}$ is equipped with the Lebesgue measure $m_{[0,1)^{30}}$.
\end{lemma}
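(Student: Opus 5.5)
The plan has three parts: show that $\Phi$ and $\beta$ are bijections, identify the Haar measure of $G$ in the coordinates $\tau$, and then push that measure through $\Psi$.

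\emph{Bijectivity.} For $\Phi$ this is immediate from Theorem~\ref{secondcoor:thm}: $\tau$ is a bijection $\R^{30}\to G$, so its inverse restricted to $\mF=\tau([0,1)^{30})$ is a bijection onto $[0,1)^{30}$. For $\beta$ I would use Lemma~\ref{lem:reduceFD}.
\begin{itemize}
\item Surjectivity: given $x=g\Gamma$, the element $\mathrm{red}(g)=g\ast\gamma$ lies in $\mF$ and satisfies $\pi(\mathrm{red}(g))=x$.
\item Injectivity: if $h,h'\in\mF$ and $h\Gamma=h'\Gamma$, then $h'=h\ast\gamma$ for some $\gamma\in\Gamma$. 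Both $h\ast e=h$ and $h\ast\gamma$ lie in $\mF$, so the uniqueness part of Lemma~\ref{lem:reduceFD} forces $\gamma=e$, hence $h=h'$.
\end{itemize}
This also shows that $\mF$ is a fundamental domain. Since $\tau(\overline{[0,1]^{30}})$ is compact and surjects onto $X$, it gives cocompactness and hence Theorem~\ref{nilmanifold}.

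\emph{Haar measure in coordinates.} The key claim is that $\tau_\ast m_{\R^{30}}$ is a Haar measure on $G$. From \eqref{mu1}--\eqref{mu3}, for fixed $x$ the left translation $x'\mapsto\mu(x,x')$ has the following form:
\begin{itemize}
\item layer $1$: $x'^{(1)}\mapsto x^{(1)}+x'^{(1)}$;
\item layer $2$: $x'^{(2)}+x^{(2)}+P(x^{(1)},x'^{(1)})$, where the added term depends only on $x'^{(1)}$;
\item layer $3$: $x'^{(3)}+x^{(3)}+Q(x^{(1)},x^{(2)},x'^{(1)})$, where by properties (a),(b) the added term depends only on the lower-layer coordinates $x'^{(1)}$.
\end{itemize}
So the map is block unitriangular, its Jacobian determinant is $1$, and Lebesgue measure is invariant under left translations. (The same holds for right translations using the symmetric structure; alternatively, nilpotent groups are unimodular.) Hence $\tau_\ast m$ is a left Haar measure $m_G$.

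\emph{The measure of $\mF$ and measurability.} Because $\beta$ is a bijection and $\mF$ is Borel (the image of a Borel set under the homeomorphism $\tau$), the standard fundamental-domain argument shows that $\mu_{G/\Gamma}(A)=m_G(\beta^{-1}A)/m_G(\mF)$ for Borel $A\subseteq X$. Concretely, $A\mapsto m_G(\beta^{-1}(A))$ is a finite $G$-invariant measure on $X$. Invariance is the one point to check: for $g\in G$, the set $\beta^{-1}(gA)$ is obtained from $g\beta^{-1}(A)$ by cutting it into countably many pieces $g\beta^{-1}(A)\cap\mF\gamma^{-1}$ and translating each on the right by $\gamma\in\Gamma$. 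Right translations preserve $m_G$ by unimodularity, and $\Gamma$ is countable, so invariance follows. Since $m_G(\mF)=m([0,1)^{30})=1$, uniqueness of the $G$-invariant probability measure on $X$ gives $\Psi_\ast\mu_{G/\Gamma}=m_{[0,1)^{30}}$.

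Measurability of $\Psi$ and $\Psi^{-1}$ follows from the facts already established:
\begin{itemize}
\item $\Psi^{-1}=\pi\circ\tau$ restricted to $[0,1)^{30}$ is continuous.
\item $\Psi=\tau^{-1}\circ\mathrm{red}\circ(\text{any Borel section})$; equivalently, $\Psi(g\Gamma)=\tred(\tau^{-1}(g))$, which is Borel because $\tred$ is a generalized polynomial.
\end{itemize}

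\emph{Main obstacle.} I expect the main subtlety to be the Jacobian/unimodularity step, i.e.\ checking carefully that translations in second-kind coordinates preserve Lebesgue measure. The triangular structure from properties (a)--(c) of $Q$ and the bilinearity of $P$ should handle it directly.
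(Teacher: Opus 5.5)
Your proposal is correct and follows the same route as the paper's proof: bijectivity of $\beta$ from the existence and uniqueness in Lemma~\ref{lem:reduceFD}, left-invariance of $\tau_*m_{\R^{30}}$ from the block unitriangular Jacobian of $x'\mapsto\mu(x,x')$, unimodularity, and then pushing $\mu_G|_{\mF}$ forward through $\Phi$. The only difference is that you justify a step the paper states without proof, namely that $A\mapsto m_G(\beta^{-1}A)$ is $G$-invariant (via the countable decomposition along right $\Gamma$-translates) and therefore equals $\mu_{G/\Gamma}$; you also check that $\Psi^{\pm1}$ are Borel.
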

 
\begin{proof}
Since $\mF=\tau([0,1)^{30})$ and $\tau$ is a bijection, we immediately get that $\Phi$ is also a bijection. The fact that $\beta$ is a bijection also follows immediately from the previous lemma. Thus, $\Psi$ is a bijection as a composition of bijections. It is left to show that $\Psi$ takes $\mu_{G/\Gamma}$ to the Lebesgue measure on $[0,1)^{30}.$ Let $m_{\R^{30}}$ denote the Lebesgue measure on $\R^{30}$. We first show that $\mu_G:=\tau_* m_{\R^{30}}$ is a Haar measure on $G$, i.e., that it is
left-invariant. For $s\in\R^{30}$ let $L_s\colon\R^{30}\to\R^{30}$ denote left-translation by
$\tau(s)$ read in second-kind coordinates, $L_s(s'):=\mu(s,s')$, so that
$\tau\bigl(L_s(s')\bigr)=\tau(s)\ast\tau(s')$. By \eqref{mu1}, \eqref{mu2} and \eqref{mu3}, each layer
of the group law has the form
\[
\mu^{(i)}(s,s')=s'^{(i)}+R_i\bigl(s,\,s'^{(1)},\dots,s'^{(i-1)}\bigr)\qquad(i=1,2,3),
\]
so $\mu^{(i)}$ depends on $s'$ through $s'^{(i)}$, with coefficient the identity, and otherwise only
through the strictly lower layers $s'^{(1)},\dots,s'^{(i-1)}$. Ordering the coordinates of
$\R^{30}=\R^{4}\times\R^{6}\times\R^{20}$ by layer, the Jacobian of $L_s$ in the variable $s'$ is
therefore block lower-triangular with identity diagonal blocks,
\[
D_{s'}L_s=\begin{pmatrix} I_4 & 0 & 0\\ \ast & I_6 & 0\\ \ast & \ast & I_{20}\end{pmatrix},
\qquad\text{so}\qquad \det D_{s'}L_s\equiv 1.
\]
By the change-of-variables formula $L_s$ preserves $m_{\R^{30}}$. Hence, for $g=\tau(s)$ and any Borel
set $B\subseteq\R^{30}$, using $g\ast\tau(B)=\tau\bigl(L_s(B)\bigr)$,
\[
\mu_G\bigl(g\ast\tau(B)\bigr)=m_{\R^{30}}\bigl(L_s(B)\bigr)=m_{\R^{30}}(B)=\mu_G\bigl(\tau(B)\bigr).
\]
As $\tau$ is a bijection, every Borel subset of $G$ has the form $\tau(B)$, so $\mu_G$ is
left-invariant. Moreover, since $G$ is nilpotent, it is unimodular, and $\mu_G$ is the (bi-invariant) Haar
measure on $G$. By construction, it coincides with the Lebesgue measure in the second-kind coordinates.

Next, observe that $\mu_{G/\Gamma}$ is exactly the push-forward of $\mu_G|_{\mF}$ under $\beta$. Thus, 
$$\Psi_* \mu_{G/\Gamma} = \Phi_* \beta^{-1}_* \mu_{G/\Gamma} = \Phi_* (\mu_G|_{\mF})$$ where $\mu_G|_{\mF}$ is the restriction of $\mu_G$ to $\mF$ (defined by assigning the measure zero to anything outside of $\mF$). Now, $\Phi = \tau^{-1}|_\mF$ while $\mF=\tau([0,1)^{30})$. We see that
$$\Phi_*(\mu_G|_{\mF}) = (\tau^{-1})_* \tau_*(m_{\R^{30}}|_{[0,1)^{30}}) = m_{[0,1)^{30}},$$ as required.
\end{proof}
\begin{remark}[Fourier analysis on nilmanifolds]
   By the uniqueness in the previous lemma, the map $\mathrm{red}:G \to \mathcal{F}$ factors through $G/\Gamma$. This shows in particular that $G/\Gamma$ is compact. Furthermore, identifying $\mathcal{F}$ with $[0,1)^{30}$ via $\tau^{-1}$, we may view every function in $L^\infty(\mu_{G/\Gamma})$ as a function in $L^\infty(\mu_{[0,1)^{30}})$. Since the push-forward of $\mu_G$ to $[0,1)^{30}$ via $\tau^{-1}\circ \mathrm{red}$ is the Lebesgue measure, $\mu_{[0,1)^{30}}$ is the Haar measure. Identifying $[0,1)^{30}$ with $\T^{30}$ in the obvious manner gives us an orthonormal basis of characters (hence Fourier analysis). This theory extends naturally to all nilmanifolds $G/\Gamma$ with $G$ a connected and simply connected Lie group. However, we stress that this construction is dependent on the choice of basis.
\end{remark}
Fourier analysis for nilmanifolds differs greatly from the abelian setting. In particular, in the latter the Fourier characters are eigenfunctions with respect to action by rotations. In the next sections we will study the action of some $\alpha\in G$ on the Fourier characters under the coordinate maps we just developed. This will show, roughly speaking, that the characters of $\T^{30}$ correspond to some \emph{higher order eigenfunctions} where the eigenvalue depends on lower-layer coordinates. This dependence is computed in the next sections.
\section{The orbit $\alpha^n\ast x$}\label{orbit:section}
The exact choice of $\alpha\in G$ which violates the Frantzikinakis--Kuca conjecture is crucial, but we postpone the choice till later. For now, we fix an arbitrary $\alpha\in G$, and for simplicity we shall further assume that $\alpha=\prod_{i=1}^4  \exp(\alpha_iY_i)$ for some $\alpha_1,\alpha_2,\alpha_3,\alpha_4\in [0,1)$ to be chosen later. Note that for ergodicity we need that $1,\alpha_1,\alpha_2, \alpha_3$ and $\alpha_4$ are independent over $\mathbb{Q}$ which we will assume throughout.

Our goal is to compute the second-kind coordinates for the powers of $\alpha$. This section is highly technical and consists of a sequence of direct computations. Nevertheless, these computations are necessary for deducing our results formally.

 Let $\Lambda = \log \alpha$ denote the first-kind coordinates of $\alpha$ (see Remark~\ref{firsttosecond}). Observe that in this case $\alpha^n = \exp(n\Lambda)$ for every $n\in \Z$. Thus, it will be convenient to start by writing down $\Lambda$ explicitly. Write $\Lambda = \Lambda^{(1)}+\Lambda^{(2)}+\Lambda^{(3)}$ for the layers of $\Lambda$, as usual. From the Baker--Campbell--Hausdorff formula we have
$$\Lambda^{(1)} = \sum_{i=1}^4 \alpha_ie_i  \qquad \Lambda^{(2)} = \frac{1}{2}\sum_{1\leq i<j\leq 4} \alpha_i\alpha_j e_{ij}$$ and finally we have:
\begin{align*}
\Lambda^{(3)}=\ 
&\tfrac1{12}\alpha_1^2\alpha_2\,e_{1,12}-\tfrac1{12}\alpha_1\alpha_2^2\,e_{2,12}-\tfrac13\alpha_1\alpha_2\alpha_3\,e_{3,12}-\tfrac13\alpha_1\alpha_2\alpha_4\,e_{4,12}\\
&+\tfrac1{12}\alpha_1^2\alpha_3\,e_{1,13}+\tfrac16\alpha_1\alpha_2\alpha_3\,e_{2,13}-\tfrac1{12}\alpha_1\alpha_3^2\,e_{3,13}-\tfrac13\alpha_1\alpha_3\alpha_4\,e_{4,13}\\
&+\tfrac1{12}\alpha_1^2\alpha_4\,e_{1,14}+\tfrac16\alpha_1\alpha_2\alpha_4\,e_{2,14}+\tfrac16\alpha_1\alpha_3\alpha_4\,e_{3,14}-\tfrac1{12}\alpha_1\alpha_4^2\,e_{4,14}\\
&+\tfrac1{12}\alpha_2^2\alpha_3\,e_{2,23}-\tfrac1{12}\alpha_2\alpha_3^2\,e_{3,23}-\tfrac13\alpha_2\alpha_3\alpha_4\,e_{4,23}\\
&+\tfrac1{12}\alpha_2^2\alpha_4\,e_{2,24}+\tfrac16\alpha_2\alpha_3\alpha_4\,e_{3,24}-\tfrac1{12}\alpha_2\alpha_4^2\,e_{4,24}\\
&+\tfrac1{12}\alpha_3^2\alpha_4\,e_{3,34}-\tfrac1{12}\alpha_3\alpha_4^2\,e_{4,34}.
\end{align*}
\begin{remark}
    Observe that each of the coefficients of each $e_{k,ij}$ is one of the following four: $\tfrac1{12}\alpha_i^2\alpha_j$,
$\ -\tfrac1{12}\alpha_i\alpha_j^2$, $\ +\tfrac16\alpha_i\alpha_j\alpha_k$, or
$-\tfrac13\alpha_i\alpha_j\alpha_k$. This observation will be used later when we choose $\alpha_1,\dots,\alpha_4.$
\end{remark}

Our next goal is to compute $\alpha^n$ in the second-kind coordinates.
\begin{corollary}\label{cor:secondkind}
Write $s(\alpha^n) = \tau^{-1}(\alpha^n)$. Then $n\mapsto s(\alpha^n)$ is a polynomial in $n$ of degree equal to the layer weight. More specifically, we have
$$s^{(1)}(\alpha^n) = n\cdot \alpha \qquad s^{(2)}(\alpha^n) = -\binom{n}{2} \sum_{i<j} \alpha_i\alpha_j e_{ij},$$ and the layer $3$ component is a cubic:
$$s^{(3)} (\alpha^n) = (n-n^3)\Lambda^{(3)} - \frac{n-n^2}{4} S_1 - \frac{n^2-n^3}{4} S_2$$ where $\Lambda^{(3)}$ is as above, and 
$$S_1:=\sum_{i<j} \alpha_i \alpha_j (e_{i,ij} + e_{j,ij}), \qquad S_2 = \sum_{i<j} \alpha_i \alpha_j [\alpha,e_{ij}]=2[\alpha,\Lambda^{(2)}].$$
\end{corollary}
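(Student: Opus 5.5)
We will compute $\tau^{-1}(\exp(n\Lambda))$ directly, converting first-kind coordinates to second-kind coordinates layer by layer. The idea is to write $n\Lambda = n\Lambda^{(1)}+n\Lambda^{(2)}+n\Lambda^{(3)}$ and find $s=(s^{(1)},s^{(2)},s^{(3)})$ with
\[
\tau(s)=\tau_1(s^{(1)})\ast\tau_2(s^{(2)})\ast\tau_3(s^{(3)})=\exp(n\Lambda).
\]

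\emph{First two layers.} We begin by computing $\log\tau_1(s^{(1)})$ for general $s^{(1)}\in\R^4$ via the Baker--Campbell--Hausdorff formula. Iterating $X\ast Y$ over the ordered product $\exp(s_1e_1)\ast\cdots\ast\exp(s_4e_4)$ gives
\[
\log\tau_1(s^{(1)}) = \sum_i s_ie_i+\tfrac12\sum_{i<j}s_is_je_{ij}+T_3(s^{(1)}),
\]
where $T_3$ is an explicit cubic in $V_3$. It collects the terms $\tfrac1{12}$ from each pairwise BCH and the $\tfrac12[\cdot,[\cdot,\cdot]]$ cross terms that arise when three factors are multiplied. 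Comparing layer $1$ gives $s^{(1)}=n\alpha$. Since $\tau_2$ and $\tau_3$ lie in $G_2$, and $G_2$ is abelian modulo $G_3$ and commutes with layer $1$ up to $V_3$, we have
\[
\log\tau(s)=\log\tau_1(s^{(1)})+\sum_{i<j}s_{ij}Y_{ij}+s^{(3)}+\tfrac12[\,\textstyle\sum_i s_ie_i,\ \sum_{i<j}s_{ij}e_{ij}\,].
\]
The last bracket comes from the BCH term $\tfrac12[X,Y]$ with $X$ of layer $1$ and $Y$ of layer $2$. Matching layer $2$ gives $\tfrac12 n^2\alpha_i\alpha_j+s_{ij}=\tfrac12 n\alpha_i\alpha_j$, hence $s_{ij}=-\binom n2\alpha_i\alpha_j$, which is the stated formula.

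\emph{Layer $3$.} Matching layer $3$ then gives
\[
s^{(3)} = n\Lambda^{(3)}-T_3(n\alpha)-\sum_{i<j}s_{ij}\,\tfrac12(e_{i,ij}+e_{j,ij})-\tfrac12\bigl[n\Lambda^{(1)},\textstyle\sum s_{ij}e_{ij}\bigr].
\]
The key observation is that $T_3$ is homogeneous cubic and $\log\tau_1(\alpha)=\log\alpha=\Lambda$. Therefore $T_3(\alpha)=\Lambda^{(3)}$ and $T_3(n\alpha)=n^3\Lambda^{(3)}$, so the first two terms combine to $(n-n^3)\Lambda^{(3)}$ without computing $T_3$ explicitly. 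Substituting $s_{ij}=-\tfrac{n^2-n}{2}\alpha_i\alpha_j$, the third term becomes $\tfrac{n^2-n}{4}S_1=-\tfrac{n-n^2}{4}S_1$. The fourth term becomes $\tfrac{n(n^2-n)}{4}[\alpha,\sum\alpha_i\alpha_je_{ij}]=\tfrac{n^3-n^2}{4}S_2=-\tfrac{n^2-n^3}{4}S_2$. This matches the claim, and the identity $S_2=2[\alpha,\Lambda^{(2)}]$ is immediate from $\Lambda^{(2)}=\tfrac12\sum\alpha_i\alpha_je_{ij}$.

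\emph{Main obstacle.} The delicate step is justifying the decomposition of $\log\tau(s)$ in the display above, specifically the cross term between layers $1$ and $2$. Here one applies BCH to $\exp(A)\ast\exp(B)$ with $A=\log\tau_1$ and $B\in V_2\oplus V_3$. Only $\tfrac12[A^{(1)},B^{(2)}]$ survives in layer $3$, because the $\tfrac1{12}$ terms have weight at least $4$. The sign and order conventions must be tracked carefully. As a consistency check, we will verify that $n=1$ returns $s=\tau^{-1}(\alpha)=(\alpha,0,0)$, and that $n=0$ gives $0$.
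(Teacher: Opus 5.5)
Your proposal is correct and follows the same route as the paper. Both write $\tau(s)=\tau_1\ast\tau_2\ast\tau_3$, observe that the only BCH correction surviving in layer~$3$ is $\tfrac12[\sigma,\rho]$, and match layers against $n\Lambda$. The paper leaves the final substitution as a ``direct computation''; you carry it out, using the homogeneity $T_3(n\alpha)=n^3\Lambda^{(3)}$ (which the paper's notation $\Lambda^{(3)}(s^{(1)})$ encodes implicitly), and your resulting coefficients of $\Lambda^{(3)}$, $S_1$ and $S_2$ are right.
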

Again, we notice an interesting phenomenon, the coefficient of third layer $e_{k,ij}$ is: 
\begin{itemize}
    \item[(1)] In $S_1$, it is $\alpha_i \alpha_j$ if $k\in \{i,j\}$ and zero otherwise.
    \item[(2)] In $S_2$, it is $\alpha_i\alpha_j\alpha_k$ if $k\in \{i,j\}$, $2\alpha_i\alpha_j\alpha_k$ if $i<k<j$ and zero otherwise.
\end{itemize}
\begin{proof} Recall that $\alpha=\exp \Lambda$ and that $\alpha^n = \exp(n\Lambda)$. Thus, $s(\alpha^n) = \tau^{-1}(\exp(n\Lambda))$ is the unique $s\in \R^{30}$ with $\tau(s)=\exp(n\Lambda)$. To compute this, we first compute $\log \tau (s)$ for some $s=(s^{(1)},s^{(2)},s^{(3)})$ and then plug in the coordinates we have already computed for $n\Lambda$.

Recall the decomposition $\tau(s) = \tau_1(s^{(1)})\ast \tau_2(s^{(2)})\ast \tau_3(s^{(3)})$ where
$$\tau_1(s^{(1)}) = \prod_{i=1}^4 \exp\left(s_iY_i\right),\quad \tau_2(s^{(2)}) = \prod_{1\leq i<j\leq 4} \exp\left(s_{ij}Y_{ij}\right),\quad \tau_3(s^{(3)}) = \prod_{\{(k,ij) : 1\leq i<j\leq 4,\, k\geq i\}} \exp\left(s_{k,ij}Y_{k,ij}\right).$$
Since $\f_3$ is central, we have that $$\log \tau_3(s^{(3)}) = \sum_{\{(k,ij) : 1\leq i<j\leq 4,\, k\geq i\}} s_{k,ij}e_{k,ij},$$ and since $\f_2$ is abelian, we have $$\log \tau_2(s^{(2)}) = \sum_{1\leq i<j\leq 4} s_{ij} Y_{ij} = \underbrace{\sum_{i<j}s_{ij}e_{ij}}_{\rho} + \frac{1}{2} \sum_{1\leq i<j\leq 4} s_{ij}(e_{i,ij}+e_{j,ij}).$$
Finally, we wish to compute $\log \tau_1(s^{(1)})$. Since $\tau_1(s^{(1)})=\prod_{i=1}^4 \exp(s_i Y_i)$, from the previous analysis of $\Lambda$, we get that
$$\log \tau_1(s^{(1)}) = \underbrace{\sum_{i=1}^4 s_ie_i}_{\sigma} + \frac{1}{2}\sum_{i<j} s_is_j e_{ij} + \Lambda^{(3)}(s^{(1)}).$$

Next, we combine everything together to compute $\log\tau(s).$ Since $\log \tau_3(s^{(3)})$ is central, $\log \tau(s) = \mathrm{BCH}(\log\tau_1(s^{(1)}),\log\tau_2(s^{(2)}))+\log \tau_3(s^{(3)})$, where $\mathrm{BCH}(X,Y) = X+Y+\frac{1}{2}[X,Y]+\frac{1}{12}([X,[X,Y]]-[Y,[X,Y]])$. Fortunately, $\tau_2(s^{(2)})$ is of weight $2$, which simplifies the equation as the last summands in the $\mathrm{BCH}$ formula vanish. Furthermore, observe that $\log \tau_2(s^{(2)})$ is a sum of a term of weight $2$ (denoted by $\rho$) and a term of weight $3$, and the term of weight $3$ also contributes nothing. Similarly, $\log \tau_1(s^{(1)})$ is a sum of a term of weight $1$ (denoted by $\sigma$) and a term of weight $2$ which does not contribute to the sum. Thus,
$$\log \tau(s) = \log \tau_1(s^{(1)}) + \log \tau_2(s^{(2)}) + \log \tau_3(s^{(3)}) + \frac{1}{2}[\sigma,\rho].$$
From this point the proof is a direct computation (plugging in the values of $n\Lambda$), see also Appendix~\ref{app:cert}.
\end{proof}
Next, we fix some $x\in G$. Throughout the rest of the paper we will analyze several properties on the coordinates of the representative of $\alpha^n\ast x$ in $[0,1)^{30}$. The following equations will be useful: Write $W=\log x=W^{(1)}+W^{(2)}+W^{(3)}$ and recall that $\alpha^n = \exp(n\Lambda)$. For $X\in \f$ we write $\ad_X:\f\rightarrow \f$ for the map $\ad_X(Y)=[X,Y]$. In particular, $\ad_X^2(Y) = [X,[X,Y]].$ Splitting by layer, with $\alpha=\Lambda^{(1)}$, we have \begin{equation}\label{eq:orbit}
\log(\alpha^n\ast x)=\mathrm{BCH}(n\Lambda,W)
= n\Lambda+W+\tfrac{n}{2}[\Lambda,W]+\tfrac{n^2}{12}[\Lambda,[\Lambda,W]]-\tfrac{n}{12}[W,[\Lambda,W]],
\end{equation}
a polynomial in $n$ of degree $2$. Splitting by
layer, with $\alpha=\Lambda^{(1)}$,
\begin{align*}
[\log(\alpha^n\ast x)]^{(1)} &= W^{(1)}+n\,\alpha,\\
[\log(\alpha^n\ast x)]^{(2)} &= W^{(2)}+n\,\Lambda^{(2)}+\tfrac{n}{2}[\alpha,W^{(1)}],\\
[\log(\alpha^n\ast x)]^{(3)} &= W^{(3)}+n\,\Lambda^{(3)}
+\tfrac{n}{2}\bigl([\alpha,W^{(2)}]+[\Lambda^{(2)},W^{(1)}]\bigr)
+\tfrac{n^2}{12}\,\ad_\alpha^2 W^{(1)}
-\tfrac{n}{12}\,[W^{(1)},[\alpha,W^{(1)}]].
\end{align*}
Later, we will compose these coordinates with the map $\mathrm{red}$ from Lemma~\ref{lem:reduceFD} to obtain a representative in $[0,1)^{30}$ which we identify with $\mathbb{T}^{30}$. 
\begin{remark}[Informal note for the reader]\label{informal:remark}
A key result behind most of our analysis, that is not used directly in this paper, is the fact that integrals over phases of the form $\int e^{2\pi i b(n)x}dx$, on any closed interval in $[0,1]$ approach zero whenever $b(n)\rightarrow\infty$. In our setting, the identification $G/\Gamma\cong [0,1)^{30}$, Fourier analysis, the analysis above and Lemma~\ref{lem:reduceFD} show that any multiple correlation sequence is an integral of such phases, in multiple variables, where $b(n)$ could either be a polynomial of degree $2$, or some generalized polynomial, and might even depend on other variables. As we will see in the next sections, this observation (assuming non-decay) leads to linear constraints on the top layer Fourier characters associated with $f_0,f_1,f_2$. For instance, in the case of Weyl systems these constraints form a Vandermonde system of linear equations in the top layer characters, forcing the top layers to vanish as the Frantzikinakis--Kuca conjecture predicts.\footnote{Since this result is not directly used in this paper, we leave it as an exercise to the reader.} We will show in the next sections that in ergodic nilsystems we can not avoid the first two equations from the Vandermonde system, but we could avoid the third one. The reason behind this phenomenon lies in the term $\frac{n^2}{12}\ad_{\alpha}^2 W^{(1)}$ from the equation above, being the only $n^2$-component in the equation. Namely, to avoid the Vandermonde scenario described above, it will be important to choose the coefficients in the Fourier expansion of $f_0,f_1,f_2$ in such a manner that they are "annihilated by $(\ad_{\alpha}^2)^*$" (see Lemma~\ref{lem:kernelvariety}.) The informal discussion in Section~\ref{informal} discusses these ideas in more detail.
\end{remark}
\section{Fourier analysis on nilmanifolds}\label{Fourier:section}
In the previous sections we have developed \emph{Fourier analysis} on $G/\Gamma$ where $G$ is the free $3$-step nilpotent Lie group and $\Gamma$ the lattice defined above. 
More specifically, we may identify $G/\Gamma$ with its
fundamental domain $\mF=\tau([0,1)^{30})$, which (as a measure space) we may then identify with
$[0,1)^{30}$ and hence with $\T^{30}$. Our identification maps $\mu_{G/\Gamma}$ to the Lebesgue measure
on $[0,1)^{30}$ and hence to the Haar measure on $\T^{30}$. In particular, the functions
$f_0,f_1,f_2\in L^\infty(\mu_{G/\Gamma})$ which we will choose soon, can be viewed as functions on
$\T^{30}$. The latter is a compact abelian group and thus $f_0,f_1$ and $f_2$ can be approximated via
characters of that group. Importantly, we may decompose by layers $\T^{30}=\T^4\times\T^6\times\T^{20}$,
and observe that since our (measure-theoretic) isomorphism $G/\Gamma\cong\T^{30}$ preserves the layers, a
character $\chi=(\chi^{(1)},\chi^{(2)},\chi^{(3)})\in\widehat{\T^{30}}$ corresponds to a function with zero
$U^3(G/\Gamma)$-seminorm if and only if $\chi^{(3)}\neq 0$ (see \cite{host2005nonconventional}). Thus, in order to find a contradiction to the
Frantzikinakis--Kuca conjecture, it suffices to find characters $f_0=\chi_0$,
$f_1=\chi_1$ and $f_2=\chi_2$ of $\T^{30}$ with $\chi_i^{(3)}\neq 0$ for all $i=0,1,2$ (so that, in
particular, $\|f_i\|_{U^3}=0$ for every $i$)\footnote{In fact, it suffices that at least one of these characters is non-zero.}, and yet
$$
    a(n) = \int_{\mathcal{F}} \chi_0(\tau^{-1}(x))\cdot \chi_1(\tau^{-1}(\mathrm{red}(\alpha^n\ast x)))\cdot
    \chi_2(\tau^{-1}(\mathrm{red}(\alpha^{2n}\ast x)))~d\mu_{G/\Gamma}(x)
$$
does not converge to zero as $n\rightarrow\infty$. Here $\mF=\tau([0,1)^{30})\subseteq G$ is the
fundamental domain and $\mu_{G/\Gamma}$ the invariant probability measure, which our identification sends
to the Lebesgue measure $m$ on $[0,1)^{30}$. Working in second-kind coordinates and identifying $\mF$ with
$[0,1)^{30}$ via $\tau^{-1}$, under which $\mu_{G/\Gamma}$ becomes $m$ and $\mathrm{red}$ becomes
$\widetilde{\mathrm{red}}$, we may further write
\begin{equation}\label{newa}
    a(n) = \int_{[0,1)^{30}} \chi_0(x)\cdot \chi_1(\widetilde{\mathrm{red}}(\mu(\tilde{\alpha}_n,x)))\cdot
    \chi_2(\widetilde{\mathrm{red}}(\mu(\tilde{\alpha}_{2n},x)))\, dx,
\end{equation}
where now $x\in[0,1)^{30}$, $\tilde{\alpha}_n=\tau^{-1}(\alpha^n)$, and the integration is with respect to the Lebesgue measure $dx$.

Analyzing a general form in coordinates for \eqref{newa} is theoretically possible, but technically difficult. To ease our computations we will start each section by imposing certain restrictions on $f_0,f_1,f_2$. First, we shall write 
$$f_0 = (\chi,0,\eta_0),\qquad f_1=(0,0,\eta_1),\qquad f_2=(0,0,\eta_2),$$ be characters on $\T^{30}$ (later identified with functions on $G/\Gamma$) for some $\eta=(\eta_0,\eta_1,\eta_2) \in \Z^{ 20}\times \Z^{ 20}\times \Z^{20}$, and $\chi\in \Z^{4}$ to be chosen later. We then have the simplified form
\begin{equation}\label{chara}
    a(n) = \int_{[0,1)^{30}} e\left(\left<\chi,x^{(1)}\right> + \sum_{i=0}^2 \left<\eta_i,\tred^{(3)}\bigl(\mu(\tilde{\alpha}_{in},x)\bigr)\right>\right)\,dx.
\end{equation}
Write $$\Psi_n(x) =\left<\chi,x^{(1)}\right> + \sum_{i=0}^2 \left<\eta_i,\tred^{(3)}\bigl(\mu(\tilde{\alpha}_{in},x)\bigr)\right>\pmod 1.$$ Our next goal is to integrate $a(n)$ layer by layer. Next we compute the integral over $x^{(3)}$, demonstrating that $\sum_{i=0}^2 \eta_i\neq0$ implies vanishing $\lim_{n\rightarrow\infty}a(n)=0.$ Indeed, by \eqref{mu1}, \eqref{mu2} and \eqref{mu3}, $x^{(3)}$ is a free variable in $\mu(\tilde{\alpha}_m,x)$ and only appears in $\mu^{(3)}$. Since $\eta_i$ are characters (an integer in $\Z^{20}$), the first fractional part in $\widetilde{\mathrm{red}}$ vanishes and we conclude that $x^{(3)}$ is a free variable in $\Psi_n(x)$. More specifically, $\Psi_n(x) = \sum_{i=0}^2 (\eta_i(x^{(3)})+\psi_i(x))$, where $\psi_i$ depend only on $x^{(1)}$ and $x^{(2)}$, but not on $x^{(3)}$.
By the orthogonality of characters, we see that $a(n)=0$, unless $\sum_{i=0}^2 \eta_i = 0$. Recall that we are allowed to choose $\eta_0,\eta_1,\eta_2$ as we wish (as long as not all of them are zero). Thus, we shall assume that $\sum_{i=0}^2 \eta_i = 0$ (an explicit choice of the $\eta$'s is given in Section~\ref{eta}) and see that in this case $\Psi_n$ depends only on $x^{(1)},x^{(2)}$. In particular, we have
\begin{equation}\label{alayer12}
    a(n) = \int_{[0,1)^{4}\times [0,1)^6} e\left(\Psi_n(x^{(1)},x^{(2)})\right) \,dx^{(1)}\, dx^{(2)}.
\end{equation}
Next, we integrate over the second layer.
\section{Integrating over $x^{(2)}$}
Before we proceed, we shall need some notations. First, for every $m\in \mathbb{N}$ we set
\begin{align*}
\phi^{(m)}&:[0,1)^4\rightarrow \Z^4\\
    \phi^{(m)}(x^{(1)}) &= \lfloor m\alpha + x^{(1)}\rfloor
\end{align*}
with the convention that $\phi^{(0)}\equiv0$, and the floor map is applied coordinate-wise. Let $\delta^{(n)}(x^{(1)}):=\phi^{(2n)}(x^{(1)})-2\phi^{(n)}(x^{(1)})$ denote the difference. Moreover, for every $\delta=(\delta_1,\delta_2,\delta_3,\delta_4)\in V_1$, write $\rho_{\delta}:V_2\rightarrow V_3$ for the map
$$\rho_{\delta}(x) = [\delta,x].$$ Its transpose $\rho_{\delta}^* : V_3^*\rightarrow V_2^*$ satisfies
$$\rho_{\delta}^*v (e_{ij}) = v([\delta,[e_i,e_j]]).$$ In particular, if $\eta\in \Z^{20}$ we may write $\rho_{\delta}^*\eta$ for the functional determined by $e_{ij}\mapsto  \left<\eta,[\delta,[e_i,e_j]]\right>.$ 
 The following proposition will be useful soon.
\begin{proposition}\label{deltacoord}
    Every coordinate of $\delta^{(n)}$ is in $\{-1,0,1\}$.
\end{proposition}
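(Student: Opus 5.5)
The statement is coordinate-wise, so I will fix $j\in\{1,2,3,4\}$, write $a=n\alpha_j$ and $y=x_j\in[0,1)$, and show $\lfloor 2a+y\rfloor-2\lfloor a+y\rfloor\in\{-1,0,1\}$. The plan is to rewrite both floors in terms of fractional parts. Put $u:=a+y$, so $\lfloor a+y\rfloor=\lfloor u\rfloor$ and $2a+y=2u-y$. Then
\[
\delta^{(n)}_j(x^{(1)})=\lfloor 2u-y\rfloor-2\lfloor u\rfloor=\lfloor 2\{u\}-y\rfloor ,
\]
because $2\lfloor u\rfloor$ is an integer and can be pulled out of the floor.

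Next I bound the argument $2\{u\}-y$. Since $\{u\}\in[0,1)$ and $y\in[0,1)$, we have $2\{u\}-y\in(-1,2)$. The floor of a real number in $(-1,2)$ lies in $\{-1,0,1\}$, which gives the claim. I would also record which case occurs: the value is $-1$ if $2\{u\}<y$, it is $1$ if $2\{u\}-y\ge 1$, and it is $0$ otherwise. This finer description may be useful later, when one restricts to $n$ with $\{n\alpha_i\}<\tfrac12$.

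The case $n=0$ is consistent with the convention $\phi^{(0)}\equiv 0$, since $\lfloor y\rfloor=0$ for $y\in[0,1)$. I do not expect any step to be an obstacle. The only care needed is to do the substitution $u=a+y$ correctly, rather than splitting the floor additively as $\lfloor 2a\rfloor+\lfloor y\rfloor$, which is not valid in general.
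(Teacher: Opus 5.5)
Your proof is correct and follows the paper's argument exactly: the paper also sets $y:=n\alpha_\ell+x^{(1)}_\ell$, rewrites $\delta^{(n)}_\ell=\lfloor 2\{y\}-x^{(1)}_\ell\rfloor$, and concludes from $2\{y\}-x^{(1)}_\ell\in(-1,2)$. Your finer case description is also accurate, and the paper uses the analogous case analysis later in Lemma~\ref{lem:factorization}(ii).
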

\begin{proof}
    The proof is a direct computation. Look at a coordinate $1\leq \ell\leq 4$, we have
    $$\delta_{\ell}^{(n)} = \lfloor 2n\alpha_{\ell}+x^{(1)}_{
\ell}\rfloor - 2\lfloor n\alpha_{\ell} + x_{\ell}^{(1)}\rfloor. $$ Set $y:= n\alpha_{\ell} + x_{\ell}^{(1)}$, we get
$$\delta^{(n)}_{\ell} = \lfloor 2y-x^{(1)}_{\ell}\rfloor - 2\lfloor y\rfloor = \lfloor 2y-x_\ell^{(1)} - 2\lfloor y\rfloor \rfloor = \lfloor 2\{y\} - x^{(1)}_{\ell}\rfloor = \lfloor 2\{n\alpha_{\ell}+x^{(1)}_{\ell}\} - x^{(1)}_{\ell}\rfloor.$$
Since $\{y\},x^{(1)}_{\ell}\in [0,1)$, $2\{y\}-x^{(1)}_{\ell} \in (-1,2)$ and therefore $\delta^{(n)}_{\ell}\in \{-1,0,1\}$, as required.
\end{proof}
\begin{lemma}[The behavior along $x^{(2)}$]\label{Psinlemma}
In the setting above, we have $$\Psi_n(x^{(1)},x^{(2)},0) = \sum_{i=0}^2 \rho_{\phi^{(in)}}^*\eta_i(x^{(2)}) + \Psi_n(x^{(1)},0,0) \pmod{1}$$ In other words, $\Psi_n(x^{(1)},x^{(2)})$ is affine in $x^{(2)}$ with integer slope that may depend on $x^{(1)}$ and $n$.
\end{lemma}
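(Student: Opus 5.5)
We will compute, for each $i\in\{0,1,2\}$ and $m=in$, the third-layer coordinates $\tred^{(3)}(\mu(\tilde\alpha_m,x))$ explicitly enough to isolate their dependence on $x^{(2)}$. The plan is to follow the three-step reduction of Lemma~\ref{lem:reduceFD} applied to $s:=\mu(\tilde\alpha_m,x)$, with $x=(x^{(1)},x^{(2)},0)$ and $x^{(1)}\in[0,1)^4$, $x^{(2)}\in[0,1)^6$.

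\textbf{Before reduction.} By \eqref{mu1}, $s^{(1)}=m\alpha+x^{(1)}$, so $\rho_1(s)=(-\phi^{(m)}(x^{(1)}),0,0)$. By \eqref{mu2}, $s^{(2)}=\tilde\alpha_m^{(2)}+x^{(2)}+P(m\alpha,x^{(1)})$. By \eqref{mu3} and property (a), $s^{(3)}=\tilde\alpha_m^{(3)}+Q(m\alpha,\tilde\alpha_m^{(2)},x^{(1)})$, which is independent of $x^{(2)}$ since the third argument of $Q$ is the primed layer-one coordinate.

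\textbf{First reduction step.} We compute $s'=\mu(s,\rho_1(s))$. Here $x^{(2)}$ now sits in the unprimed second-layer slot, and the primed layer-one entry is $-\phi^{(m)}$. As explained in the proof of Theorem~\ref{secondcoor:thm}, the part of $Q$ involving $x^{(2)}$ comes only from moving $\tau_1(-\phi^{(m)})$ past $\tau_2(s^{(2)})$. This part is the bilinear term obtained from the commutators $(\exp(s_{ij}Y_{ij}),\exp(tY_k))$, whose logarithm is $-s_{ij}tY_{k,ij}$, with $t=-\phi^{(m)}_k$. Hence the $x^{(2)}$-dependence of $s'^{(3)}$ is exactly the term $[\phi^{(m)},x^{(2)}]=\rho_{\phi^{(m)}}(x^{(2)})$, after fixing signs. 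All other terms depend only on $x^{(1)}$, $n$ and $\alpha$. Meanwhile $s'^{(2)}=s^{(2)}+P(s^{(1)},-\phi^{(m)})$.

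\textbf{Second and third reduction steps.} The second step subtracts $\lfloor s'^{(2)}\rfloor$ in layer two. Since $\rho_2$ has no layer-one part, property (c) shows that $Q$ contributes nothing, so $s''^{(3)}=s'^{(3)}$. Although $\lfloor s'^{(2)}\rfloor$ depends on $x^{(2)}$ in a complicated way, it never enters layer three. The third step subtracts the integer vector $\lfloor s''^{(3)}\rfloor$. Pairing with the integer vectors $\eta_i$ kills this term modulo $1$. Therefore
\[
\langle\eta_i,\tred^{(3)}(\mu(\tilde\alpha_{in},x))\rangle\equiv\langle\eta_i,\rho_{\phi^{(in)}}(x^{(2)})\rangle+c_i(x^{(1)},n)\pmod 1 .
\]
Summing over $i$ and noting that $\langle\eta_i,\rho_\delta x^{(2)}\rangle=\rho_\delta^*\eta_i(x^{(2)})$ gives the claim. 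Setting $x^{(2)}=0$ identifies the constant term as $\Psi_n(x^{(1)},0,0)$. The slope is integral because $\phi^{(in)}\in\Z^4$ and $[e_k,e_{ij}]=e_{k,ij}$ has integer Hall coordinates.

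\textbf{Main obstacle.} The delicate point is the first reduction step. We must verify that no hidden $x^{(2)}$-dependence enters layer three through the trilinear part of $Q$, or through the $\frac12(e_{i,ij}+e_{j,ij})$ correction in $Y_{ij}$. For the trilinear part, it involves only layer-one arguments. For the correction, it is absorbed because we commute the group elements $\exp(s_{ij}Y_{ij})$, whose commutators with layer one are exact. We also need the sign of the bilinear term to match $\rho_{\phi^{(m)}}$ rather than its negative. We will check this with the explicit commutator formula used in the proof of Theorem~\ref{adaptedbasis}, where $\log(\exp(s_{ij}Y_{ij}),\exp(\varepsilon Y_a))=-s_{ij}\varepsilon Y_{a,ij}$. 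With $\varepsilon=-\phi^{(m)}_a$, this gives $+\phi^{(m)}_a s_{ij}Y_{a,ij}$, as required.
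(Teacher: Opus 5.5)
Your proposal is correct and follows essentially the paper's route. Both arguments isolate the only $x^{(2)}$-dependence of the reduced top layer as the cross term $[s_m^{(2)},c_m^{(1)}]=[\phi^{(m)},s_m^{(2)}]$, which comes from moving the layer-one correction $-\phi^{(m)}$ past $\tau_2(s_m^{(2)})$; the layer-two correction drops out of $Q$, and the layer-three correction vanishes modulo $1$ against the integer $\eta_i$. The only difference is bookkeeping: the paper applies the group law once with the full lattice element $c_m=\tau^{-1}(\gamma_m)$, whereas you follow the three floors of Lemma~\ref{lem:reduceFD} one at a time, and your explicit sign check via $\log(\exp(s_{ij}Y_{ij}),\exp(\varepsilon Y_a))=-s_{ij}\varepsilon Y_{a,ij}$ agrees with the paper's.
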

\begin{proof}
Throughout we fix $x^{(1)}\in[0,1)^4$ and view $\Psi_n(x^{(1)},x^{(2)}) = \Psi_n(x^{(1)},x^{(2)},0)$ as a function depending only on $x^{(2)}$. Write $g_x:=\tau(x^{(1)},x^{(2)},0)$ and, for $m\in\Z$, $g_m:=\alpha^m\ast g_x$. By Lemma~\ref{lem:reduceFD}, there is a
unique $\gamma_m\in\Gamma$, such that $g_m\ast \gamma_m \in \mF$. By construction, $g_m\ast \gamma_m = \mathrm{red}(g_m).$ Now, let $s_m = \tau^{-1}(g_m)\in \R^{30}$ and $c_m=\tau^{-1}(\gamma_m)\in \Z^{30}$. We have $\tred\left(\mu(\tilde{\alpha}_m,x)\right) = \tau^{-1}(g_m\ast \gamma_m) = \mu(s_m,c_m).$ In particular, 
$$
\tred^{(3)}\left(\mu(\tilde{\alpha}_m,x)\right)  =\mu^{(3)}(s_m,c_m).$$
Thus, we shall study the dependence of $\mu^{(3)}(s_m,c_m)$ on $x^{(2)}$. From \eqref{mu3} we have
\begin{equation}\label{eq:t3decomp}
    \mu^{(3)}(s_m,c_m) = s_m^{(3)}+c_m^{(3)}+Q(s_m^{(1)},s_m^{(2)},c_m^{(1)}).
\end{equation}

We follow the $x^{(2)}$-dependence of each piece in \eqref{eq:t3decomp} (with $x^{(1)},m$ fixed).
\begin{itemize}
\item By \eqref{mu1}, $s_m^{(1)}=m\alpha+x^{(1)}$ is independent of $x^{(2)}$. Since 
$\mathrm{red}(g_m)\in\mathcal F$ forces $s_m^{(1)}+c_m^{(1)}\in[0,1)^4$ with $c_m^{(1)}\in\Z^4$,
uniqueness gives $c_m^{(1)}=-\lfloor s_m^{(1)}\rfloor=-\phi^{(m)}(x^{(1)})$. 
\item Let $s(\alpha^m) = \tau^{-1}(\alpha^m)$, as in Corollary~\ref{cor:secondkind}. By \eqref{mu2}, $s_m^{(2)}=s^{(2)}(\alpha^m)+x^{(2)}+P\bigl(s^{(1)}(\alpha^m),x^{(1)}\bigr)=:x^{(2)}+d_m(x^{(1)})$ is
affine in $x^{(2)}$, with coefficient $1$. 
\item $s_m^{(3)}$ is independent of $x^{(2)}$: in
$s_m^{(3)}=\mu^{(3)}\bigl(s(\alpha^m),(x^{(1)},x^{(2)},0)\bigr)$. Indeed, the slot $x'^{(2)}=x^{(2)}$ can enter only through $Q$ which is independent of $x'^{(2)}$.
\item $c_m^{(2)},c_m^{(3)}$ may depend on $x^{(2)}$. However, the slot $x'^{(2)}=c_m^{(2)}$ does not enter $Q$, so $c_m^{(2)}$ appears in
\eqref{eq:t3decomp} nowhere, while $c_m^{(3)}\in\Z^{20}$ enters only as the explicit integer summand (which vanishes after applying $\eta_i$).
\end{itemize}
Hence, the sole $x^{(2)}$-dependence in \eqref{eq:t3decomp} is the single $Q$-term
$[\,s_m^{(2)},c_m^{(1)}\,]$, linear in $x^{(2)}$ (and the integer $c_m^{(3)}$ which vanishes under $\eta$). Therefore, absorbing the $x^{(1)}$-dependence into some $D_m(x^{(1)})$, we are left with 
$$ \left<\eta_i, [s_m^{(2)},c_m^{(1)}]\right>=\left<\eta_i,-[c_m^{(1)},s_m^{(2)}]\right> =  -\left<\eta_i, \rho_{c_m^{(1)}}(s_m^{(2)})\right>= \rho_{\phi^{(m)}}^*\eta_i(s_m^{(2)}).$$ Since $s_m^{(2)} = x^{(2)} + d_m(x^{(1)})$, absorbing the latter into $D_m(x^{(1)})$, we are left with $\rho_{\phi^{(m)}}^*\eta_i(x^{(2)})+  D_m(x^{(1)}).$

Thus, $\Psi_n(x^{(1)},x^{(2)},0) = \left<\chi,x^{(1)}\right> + \sum_{i=0}^2 D_{in}(x^{(1)}) + \sum_{i=0}^2 \rho^*_{\phi^{(in)}}\eta_i(x^{(2)}).$ Substituting $x^{(2)}=0$ gives $\Psi_n(x^{(1)},0,0)=\left<\chi,x^{(1)}\right> + \sum_{i=0}^2D_{in}(x^{(1)})$ from which it follows that $\Psi_n(x^{(1)},x^{(2)}) = \sum_{i=0}^2 \rho^*_{\phi^{(in)}}\eta_i(x^{(2)}) + \Psi_n(x^{(1)},0,0),$ as required. 
\end{proof}
Write $\sigma_n:=\sum_{i=0}^2\rho^*_{\phi^{(in)}}\eta_i\in V_2^*$ for the slope produced by
Lemma~\ref{Psinlemma}. It has integer coordinates, since
$\rho^*_\delta\eta_i(e_{jk})=\langle\eta_i,[\delta,e_{jk}]\rangle$ is an integer combination of the
$\delta_\ell$ whenever $\eta_i\in\Z^{20}$. Thus
\[
\Psi_n(x^{(1)},x^{(2)})=\sigma_n(x^{(2)})+\Psi_n(x^{(1)},0,0)\pmod1,
\]
and, because $x^{(2)}\mapsto e(\sigma_n(x^{(2)}))$ is a character of $\T^6$, integrating over
$x^{(2)}$ keeps only its trivial part:
\begin{equation}\label{eq:x2integral}
\int_{[0,1)^6}e\bigl(\Psi_n(x^{(1)},x^{(2)})\bigr)\,dx^{(2)}
=e\bigl(\Psi_n(x^{(1)},0,0)\bigr)\cdot\mathbf 1_{\{\sigma_n=0\}}.
\end{equation}

We claim that, unless $\eta_1+2\eta_2=0$, the indicator in \eqref{eq:x2integral} vanishes for all
large $n$, so that $a(n)=0$. Since $\phi^{(0)}\equiv0$ we have
$\sigma_n=\rho^*_{\phi^{(n)}}\eta_1+\rho^*_{\phi^{(2n)}}\eta_2$. Writing
$\phi^{(2n)}=2\phi^{(n)}+\delta^{(n)}$ with $\delta^{(n)}\in\{-1,0,1\}^4$
(Proposition~\ref{deltacoord}) and using that $\delta\mapsto\rho^*_\delta\zeta$ is linear,
\[
\sigma_n(x^{(1)})=\rho^*_{\phi^{(n)}(x^{(1)})}\,(\eta_1+2\eta_2)+\rho^*_{\delta^{(n)}(x^{(1)})}\,\eta_2 .
\]
The second term is bounded uniformly in $n$ and $x^{(1)}$. For the first, write
$\phi^{(n)}(x^{(1)})=\lfloor n\alpha\rfloor+\epsilon$ with $\epsilon\in\{0,1\}^4$, so that
\[
\sigma_n(x^{(1)})=\rho^*_{\lfloor n\alpha\rfloor}(\eta_1+2\eta_2)+O_\eta(1),
\]
the error being uniform in $x^{(1)}$. Suppose $\zeta:=\eta_1+2\eta_2\neq0$. The linear map $\delta\mapsto\rho^*_\delta\zeta$ is non-zero
(otherwise $\zeta$ would annihilate $[V_1,V_2]=V_3$ and hence vanish), so there are indices
$1\le i<j\le4$ and integers $c_1,\dots,c_4$, not all zero, with
\[
\rho^*_{\delta}\zeta(e_{ij})=\sum_{k=1}^4 c_k\,\delta_k,\qquad c_k=\langle\zeta,[e_k,e_{ij}]\rangle\in\Z .
\]
Evaluating at $\delta=\lfloor n\alpha\rfloor$ and writing $\lfloor n\alpha_k\rfloor=n\alpha_k-\{n\alpha_k\}$,
\[
\rho^*_{\lfloor n\alpha\rfloor}\zeta(e_{ij})=n\sum_{k}c_k\alpha_k-\sum_{k}c_k\{n\alpha_k\}.
\]
Since $1,\alpha_1,\dots,\alpha_4$ are independent over $\Q$ and the $c_k$ are integers not all zero,
$\sum_k c_k\alpha_k\neq0$. Therefore, this coordinate of $\sigma_n(x^{(1)})$ satisfies
$|\rho^*_{\lfloor n\alpha\rfloor}\zeta(e_{ij})|\to\infty$ uniformly in $x^{(1)}$ (recall
$\sigma_n=\rho^*_{\lfloor n\alpha\rfloor}\zeta+O_\eta(1)$). In particular, $\sigma_n(x^{(1)})\neq0$ for
every $x^{(1)}$ once $n$ is large, so $\mathbf 1_{\{\sigma_n=0\}}$ is eventually identically zero.

We see that any non-vanishing example must satisfy $\eta_1+2\eta_2=0$. Combined with the
constraint $\eta_0+\eta_1+\eta_2=0$ from the previous section this forces
\[
(\eta_0,\eta_1,\eta_2)=(\eta,-2\eta,\eta),\qquad \eta:=\eta_2\in\Z^{20}\ \text{(chosen in Section~\ref{eta}).}
\]
With this choice $\sigma_n=\rho^*_{\delta^{(n)}}\eta$, where $\delta^{(n)}=\phi^{(2n)}-2\phi^{(n)}$, and
\eqref{eq:x2integral} yields the following.
\begin{corollary}\label{cor:explicit}
In the setting above, with $(\eta_0,\eta_1,\eta_2)=(\eta,-2\eta,\eta)$,
\[
\int_{[0,1)^6}e\bigl(\Psi_n(x^{(1)},x^{(2)})\bigr)\,dx^{(2)}
=e\bigl(\Psi_n(x^{(1)},0,0)\bigr)\cdot\mathbf 1_{\{\delta^{(n)}(x^{(1)})\in K\}},
\qquad K=\{v\in\Z^4:\rho^*_v\eta=0\}.
\]
\end{corollary}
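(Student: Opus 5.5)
The plan is to specialize Lemma~\ref{Psinlemma} and identity \eqref{eq:x2integral} to $(\eta_0,\eta_1,\eta_2)=(\eta,-2\eta,\eta)$, and then identify the event $\{\sigma_n=0\}$ with $\{\delta^{(n)}\in K\}$.

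First, by Lemma~\ref{Psinlemma}, for fixed $x^{(1)}$ the phase $\Psi_n(x^{(1)},x^{(2)})$ equals $\sigma_n(x^{(2)})+\Psi_n(x^{(1)},0,0)$ modulo $1$. Here
\[
\sigma_n=\sum_{i=0}^2\rho^*_{\phi^{(in)}(x^{(1)})}\eta_i
\]
is an integer functional on $V_2$; its integrality was noted right after that lemma. Since $x^{(2)}\mapsto e(\sigma_n(x^{(2)}))$ is a character of $\T^6$, orthogonality of characters gives \eqref{eq:x2integral}. So it remains only to compute $\sigma_n$.

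Next, I use $\phi^{(0)}\equiv 0$, so that $\rho^*_{\phi^{(0)}}\eta_0=0$. I then use linearity of $\delta\mapsto\rho^*_\delta\zeta$ in $\delta$, together with linearity in $\zeta$. Writing $\phi^{(2n)}=2\phi^{(n)}+\delta^{(n)}$, we get
\[
\sigma_n=\rho^*_{\phi^{(n)}}(-2\eta)+\rho^*_{2\phi^{(n)}+\delta^{(n)}}\eta=\rho^*_{\delta^{(n)}}\eta .
\]
Hence $\sigma_n(x^{(1)})=0$ exactly when $\delta^{(n)}(x^{(1)})$ lies in $K=\{v\in\Z^4:\rho^*_v\eta=0\}$. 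This uses that $\delta^{(n)}(x^{(1)})\in\Z^4$, which holds since it is a difference of integer floor vectors; Proposition~\ref{deltacoord} even places it in $\{-1,0,1\}^4$. Substituting this into \eqref{eq:x2integral} gives the stated formula.

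No step here is a genuine obstacle. The only point needing care is that the identity holds pointwise in $x^{(1)}$. The indicator is a function of $x^{(1)}$ through $\delta^{(n)}(x^{(1)})$, and this is legitimate because the $x^{(1)}$-dependence of the slope enters only via the floors $\phi^{(m)}(x^{(1)})$, exactly as tracked in the proof of Lemma~\ref{Psinlemma}.
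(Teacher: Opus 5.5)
Your proposal is correct and is essentially the paper's own argument. You combine Lemma~\ref{Psinlemma} with the orthogonality identity \eqref{eq:x2integral}, then use $\phi^{(0)}\equiv 0$, $\phi^{(2n)}=2\phi^{(n)}+\delta^{(n)}$ and linearity of $\delta\mapsto\rho^*_\delta\zeta$ to get $\sigma_n=\rho^*_{\phi^{(n)}}(\eta_1+2\eta_2)+\rho^*_{\delta^{(n)}}\eta_2=\rho^*_{\delta^{(n)}}\eta$, so that $\{\sigma_n=0\}=\{\delta^{(n)}\in K\}$ pointwise in $x^{(1)}$.
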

In the next section, we shall integrate over $x^{(1)}$. This part is particularly interesting as it involves both a quadratic term, a linear term, and most crucially generalized quadratic terms.
\subsection{Choosing $\eta$}\label{eta}
We choose $\eta\in \Z^{20}$, once and for all, whose only non-zero coordinates are:
 \[
\langle\eta,e_{4,12}\rangle=2,\quad \langle\eta,e_{1,13}\rangle=-9,\quad
\langle\eta,e_{2,14}\rangle=1,\quad \langle\eta,e_{2,23}\rangle=6.
\]
\begin{remark}[Optional remark on the choice of $\eta$.]\label{etachoice:rem}
  This $\eta$ was produced by mistake. At an earlier attempt of producing a counterexample the rotation was chosen first. A seemingly reasonable choice was $$\alpha=(\sqrt{2},\sqrt{3},\sqrt{5},\sqrt{30},0,\dots,0)$$ (only the first layer coordinates are non-zero) because it is the simplest one where $\ker (\ad_{\alpha}^2)^*$ is non-trivial and $\eta$ was the simplest vector in this kernel (which was necessary to avoid the Vandermonde system discussed earlier). In retrospect, this choice of $\alpha$, and any choice of $\alpha$ with algebraic coordinates can not produce a counterexample to the Frantzikinakis--Kuca conjecture due to Schmidt's subspace theorem (see \cite{Schmidtsubspace}). In particular, the set $\mathcal{V}$ (introduced below) was defined in retrospect as the variety of all $\alpha$ satisfying  $\eta\in \ker(\ad_{\alpha}^2)^*$ for this choice of $\eta$. Then, we were fortunate to be able to choose from $\mathcal{V}$ the transcendental $\alpha$ that was eventually used for the counterexample.
\end{remark}
\begin{lemma}[Properties of $\eta$]\label{lem:factorization}
Abbreviate $\eta=(2,-9,1,6)$ in the ordered basis $(e_{4,12},e_{1,13},e_{2,14},e_{2,23})$ of its support, and let $\delta=(\delta_1,\delta_2,\delta_3,\delta_4)\in \Z^4$ be arbitrary. Then,
\begin{enumerate}
\item[(i)] In the basis $(e_{12},e_{13},e_{14},e_{23},e_{24},e_{34})$ of $V_2$,
\[
\rho^{*}_{\delta}\eta=\bigl(2\delta_4,\,-9\delta_1,\,\delta_2,\,6\delta_2,\,-\delta_1,\,0\bigr).
\]
In particular, $\rho^{*}_{\delta}\eta=0\iff\delta_1=\delta_2=\delta_4=0$, and the kernel of $\delta\mapsto\rho^{*}_{\delta}\eta$ is
\[
K=\{\delta:\rho^{*}_{\delta}\eta=0\}=\Z e_3=\{(0,0,*,0)\}\subset\Z^4 .
\]
\item[(ii)] Write $\theta_{\ell}:=\{n\alpha_{\ell}\}$ and $E^{(n)}_{\ell}:=\{x\in[0,1):\delta^{(n)}_{\ell}(x)=0\}$, where $\ell\in \{1,2,3,4\}$ and $\delta^{(n)}_{\ell}$ is the $\ell^{\mathrm{th}}$-coordinate of $\delta^{(n)}$. Then,
$$
E_{\ell}^{(n)}=
\begin{cases}
[0,1-2\theta_{\ell}\,) & \theta_{\ell}<\frac{1}{2},\\[2pt]
[2-2\theta_{\ell},1) & \theta_{\ell}>\frac{1}{2},\\[2pt]
\emptyset & \theta_{\ell}=\frac{1}{2},
\end{cases}
\qquad
\bigl|E_\ell^{(n)}\bigr|=\bigl|\,1-2\{n\alpha_{\ell}\}\,\bigr|.
$$
where $|E_{\ell}^{(n)}|$ is the length of the interval (or Lebesgue measure of) $E_{\ell}^{(n)}$.
\item[(iii)] Consequently, from Corollary~\ref{cor:explicit} we have
$a(n)=\int_{[0,1)^4}\mathbf 1_{\{\delta^{(n)}\in K\}}(x^{(1)})\cdot \,e\bigl(\Psi_n(x^{(1)})\bigr)\,dx^{(1)}$, factorizes as
\begin{equation}\label{finala}a(n)=\int_{E_n^{(1)}\times E_n^{(2)}\times[0,1)\times E_n^{(4)}}e\bigl(\Psi_n(x^{(1)})\bigr)\,dx^{(1)}.
\end{equation}
\end{enumerate}
\end{lemma}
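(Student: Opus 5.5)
For part (i), I will compute $\rho^*_\delta\eta(e_{ij})=\langle\eta,[\delta,e_{ij}]\rangle=\sum_k\delta_k\langle\eta,e_{k,ij}\rangle$. The key point is that $e_{k,ij}$ with $k<i$ is not a Hall basis element, so it has to be rewritten in the Hall basis by the Jacobi identity. Recall $[[e_k,e_i],e_j]=e_{i,jk}+e_{k,ij}$, which gives $e_{k,ij}=-e_{i,kj}\cdot(\dots)$-type rewrites. More precisely, for $k<i<j$ we have $e_{k,ij}=e_{i,kj}-e_{j,ki}$, up to the sign conventions $e_{ab}=-e_{ba}$.

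I will go through the six pairs $(i,j)$ and the four values of $k$. For each I record which of the four support vectors $e_{4,12},e_{1,13},e_{2,14},e_{2,23}$ occur, with their coefficients.
\begin{itemize}
\item For $e_{12}$: only $k=4$ hits the support, giving $2\delta_4$.
\item For $e_{13}$: $k=1$ gives $-9\delta_1$. The cases $k=2$ (which is $e_{2,13}$, a Hall element not in the support) and $k=4$ contribute nothing.
\item For $e_{14}$: $k=2$ gives $\delta_2$.
\item For $e_{23}$: $k=2$ gives $6\delta_2$. The case $k=1$ requires the Jacobi rewrite $e_{1,23}=e_{2,13}-e_{3,12}$, which involves no support element.
\item For $e_{24}$: the case $k=1$ gives $e_{1,24}=e_{2,14}-e_{4,12}$, so the coefficient is $1-2=-1$, i.e.\ $-\delta_1$.
\item For $e_{34}$: $e_{1,34}=e_{3,14}-e_{4,13}$ and $e_{2,34}=e_{3,24}-e_{4,23}$, neither in the support, giving $0$.
\end{itemize}
The signs in these rewrites are the main place errors can creep in. I will double-check them against the stated Jacobi form, and against the computer certificate of Appendix~\ref{app:cert} if needed. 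The kernel statement then follows immediately: the vector vanishes iff $\delta_4=\delta_1=\delta_2=0$.

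For part (ii), I will reuse the formula from the proof of Proposition~\ref{deltacoord}: $\delta^{(n)}_\ell=\lfloor 2\{\theta_\ell+x\}-x\rfloor$. I split according to whether $x<1-\theta_\ell$, in which case $\{\theta_\ell+x\}=\theta_\ell+x$ and $\delta_\ell=\lfloor 2\theta_\ell+x\rfloor$, or $x\ge1-\theta_\ell$, in which case $\delta_\ell=\lfloor 2\theta_\ell+x-2\rfloor$.
\begin{itemize}
\item If $\theta_\ell<\tfrac12$: the first region gives $0$ exactly when $x<1-2\theta_\ell$. In the second region we have $2\theta_\ell+x-2<0$, so $\delta_\ell=-1$. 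Hence $E=[0,1-2\theta_\ell)$.
\item If $\theta_\ell>\tfrac12$: the first region always has $2\theta_\ell+x\ge1$. In the second region the value is $0$ exactly when $x\ge 2-2\theta_\ell$, and since $2-2\theta_\ell\ge 1-\theta_\ell$ this condition lies inside the region. Hence $E=[2-2\theta_\ell,1)$.
\item If $\theta_\ell=\tfrac12$: both regions miss $0$, so $E=\emptyset$.
\end{itemize}
In each case the length is $|1-2\theta_\ell|$.

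For part (iii), I will combine Corollary~\ref{cor:explicit} with (i). The condition $\delta^{(n)}(x^{(1)})\in K$ holds iff $\delta^{(n)}_\ell(x_\ell)=0$ for $\ell=1,2,4$. Since $\delta^{(n)}_\ell$ depends only on $x_\ell$, the indicator factors as $\mathbf 1_{E^{(1)}_n}(x_1)\mathbf 1_{E^{(2)}_n}(x_2)\mathbf 1_{E^{(4)}_n}(x_4)$, with no constraint on $x_3$. Substituting this into \eqref{alayer12}, after the $x^{(2)}$-integration of Corollary~\ref{cor:explicit}, gives \eqref{finala}.
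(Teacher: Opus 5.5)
Your proposal is correct and follows the paper's proof essentially step for step: the same column-by-column evaluation of $\langle\eta,[e_k,e_{ij}]\rangle$ with the Jacobi rewrite $e_{k,ij}=e_{i,kj}-e_{j,ki}$ for $k<i$ in (i), the same split according to whether $\theta_\ell+x<1$ or $\theta_\ell+x\ge 1$ in (ii), and the same factorization of the indicator over $x_1,x_2,x_4$ (with $x_3$ free) in (iii). In (ii), your condition $x<1-2\theta_\ell$ on the first region is actually stated more carefully than in the paper. There the condition is written as $0\le 2\theta<1$, dropping the $x$, although the paper's final description of $E^{(n)}_\ell$ is still correct.
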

 
\begin{proof}
We start with $(i)$. By definition, $\rho^*_{\delta}\eta(e_{ij}) = \left<\eta,[\delta,e_{ij}]\right>= \sum_{\ell=1}^4 \delta_{\ell}\cdot \left<\eta,[e_\ell,e_{ij}]\right>.$
From the definition of our basis, we have
\[
[e_{\ell},[e_i,e_j]]=
\begin{cases}
e_{\ell,ij} & \ell\ge i,\\
e_{i,\ell j}-e_{j,\ell i} & \ell<i,
\end{cases}
\qquad(i<j),
\]
the second line being the Jacobi identity $[e_\ell,[e_i,e_j]]=[e_i,[e_\ell,e_j]]-[e_j,[e_\ell,e_i]]$. Pairing with $\eta$, which is supported on
$\{e_{4,12},e_{1,13},e_{2,14},e_{2,23}\}$, we read off each column:
\begin{itemize}
\item $e_{12}$: $[e_\ell,[e_1,e_2]]=e_{\ell,12}$ for all $\ell$. However, only $\ell=4$ meets the support of $\eta$, $\left<\eta,e_{4,12}\right>=2$. Multiplying by $\delta_4$ gives the first-kind coordinate.
\item $e_{13}$: $[e_\ell,[e_1,e_3]]=e_{\ell,13}$. However, only $\ell=1$ meets the support of $\eta$, $\left<\eta,e_{1,13}\right>=-9$. Multiplying by $\delta_1$ gives the second-kind coordinate. 
\item $e_{14}$: $[e_\ell,[e_1,e_4]]=e_{\ell,14}$. However, only $\ell=2$ meets the support of $\eta$, $\left<\eta,e_{2,14}\right>=1$. Multiplying by $\delta_2$ gives the third coordinate.
\item $e_{23}$: for $\ell\ge2$, $[e_\ell,[e_2,e_3]]=e_{\ell,23}$, of which only $\ell=2$ meets the support, $\left<\eta,e_{2,23}\right>=6$. For $\ell=1$,  $[e_1,[e_2,e_3]]=e_{2,13}-e_{3,12}$, both off the support. Thus, the fourth quantity is $\delta_2\cdot \left<\eta,e_{2,23}\right>=6\delta_2.$
\item $e_{24}$: for $\ell\ge2$, $[e_\ell,[e_2,e_4]]=e_{\ell,24}$, all outside the support of $\eta$. For $\ell=1$,
$[e_1,[e_2,e_4]]=e_{2,14}-e_{4,12}$, paired with $\eta$ gives $1-2=-1$. Multiplying by $\delta_1$ give the fifth coordinate $-\delta_1$.
\item $e_{34}$: for $\ell\ge3$, $e_{\ell,34}$ is outside the support. For $\ell=1,2$,
$[e_1,[e_3,e_4]]=e_{3,14}-e_{4,13}$ and $[e_2,[e_3,e_4]]=e_{3,24}-e_{4,23}$, all outside the support. This gives the last coordinate.
\end{itemize}
Thus, $\rho_{\delta}^* \eta = 0$ if and only if all the coordinates are zero which gives $\delta_1=\delta_2=\delta_4=0$, while $\delta_3$ is unconstrained. Therefore, 
$K=\Z e_3$, as required.\\

Now we prove $(ii)$. Fix $1\leq \ell\leq 4$, write $x=x^{(1)}_{\ell}$ and $\theta:=\theta_{\ell}=\{n\alpha_{\ell}\}$. From the computation in the proof of Proposition \ref{deltacoord}, we see that $\delta_{\ell}^{(n)}(x) = 0$ if and only if $0\leq 2\{\theta+x\}-x < 1.$ Thus, we consider two cases: if $\theta+x < 1$, then $\{\theta+x\}=\theta+x$ and $\delta_{\ell}^{(n)}(x) = 0$ when $$0\leq 2\theta <1 \iff 0\leq \theta<\frac{1}{2}, $$ and if $\theta+x\geq 1$, then $\{\theta+x\} = \theta+x-1$ and $\delta_{\ell}^{(n)}(x)=0$ when
$$0\leq 2\theta + x - 2 < 1 \iff 2-2\theta\leq x\leq 3-2\theta.$$ However, since $x<1\leq 3-2\theta$, the right inequality is automatic. The left inequality $x\geq 2-2\theta$ is only possible when $\theta>\frac{1}{2}$, and then $x\geq 2-2\theta \geq 1-\theta$ so $\theta+x\geq 1$. 
The two cases partition $[0,1)$, so $E_\ell^{(n)}$ is their union, giving the stated description. Its length is
$1-2\theta$ for $\theta<\tfrac12$ and $1-(2-2\theta)=2\theta-1$ for $\theta>\frac{1}{2}$, i.e., $|1-2\theta|$ in both,
and $0$ at $\theta=\frac{1}{2}$.\\
Finally, we deduce $(iii)$ by noting that from $(i)$, $\delta^{(n)}\in K$ if and only if $\delta^{(n)}_1=\delta^{(n)}_2=\delta^{(n)}_4=0$ (with no condition on $\delta^{(n)}_3$). Since $\delta_{\ell}^{(n)}$ is only a function of $x^{(1)}_{\ell}$, the indicator splits into a product of single-variable indicators independent of $x_3^{(1)}.$ Thus,
\[
\mathbf 1_{\{\delta_n\in K\}}=\mathbf 1_{E_n^{(1)}}(x^{(1)}_{1})\,\mathbf 1_{E_n^{(2)}}(x^{(1)}_{2})\,\mathbf 1_{E_n^{(4)}}(x^{(1)}_{4}).
\]
Inserting this into Corollary~\ref{cor:explicit} and restricting the domain of
integration accordingly,
\[
a(n)=\int_{[0,1)^4}\mathbf 1_{E_n^{(1)}}\mathbf 1_{E_n^{(2)}}\mathbf 1_{E_n^{(4)}}\,e\bigl(\Psi_n(x^{(1)})\bigr)\,dx^{(1)}
=\int_{E_n^{(1)}\times E_n^{(2)}\times[0,1)\times E_n^{(4)}}e\bigl(\Psi_n(x^{(1)})\bigr)\,dx^{(1)},
\]
as required.
\end{proof}
\section{Integrating over $x^{(1)}$}
\subsection{Informal discussion.}\label{informal}
Recall that the top layers of the characters attached to $f_0,f_1,f_2$ are $\eta_0,\eta_1,\eta_2\in \Z^{20}$. In the previous sections we noticed that non-vanishing requires two constraints:
\begin{equation}\label{2vandermonde}
\begin{cases}
    \eta_0 + \eta_1 + \eta_2 = 0,\\ \eta_1 + 2\eta_2=0
\end{cases}.
\end{equation}
The first is forced by orthogonality of characters, without which $a(n)$ vanishes identically. The second however is dependent on the coefficient of $n$. We saw that since $\alpha\mapsto [\alpha,x^{(2)}]$ is \textit{linear} in $\alpha$, the $n$-coefficient is a fixed integral linear form in the coordinates $\alpha_1,\dots,\alpha_4$ which are independent over $\Q$ due to ergodicity.\\
The coefficient of $n^2$ is of different nature. Taking a look at \eqref{eq:orbit} we see that the only $n^2$-term of the orbit is $\frac{n^2}{12}\ad_{\alpha}^2 W^{(1)}$, so after substituting the times $0$ for $f_0$, $n$ for $f_1$ and $2n$ for $f_2$ we get the $n^2$-coefficient
$$\frac{1}{12}\left<\eta_1+4\eta_2,\ad_{\alpha}^2 W^{(1)}\right>.$$
Theoretically, this might force the third linear equation $\eta_1+4\eta_2=0$, which together with \eqref{2vandermonde} is a Vandermonde system of linear equations with a unique solution $\eta_0=\eta_1=\eta_2=0$. The key difference here is that $\alpha\mapsto \ad_{\alpha}^2 W^{(1)} = [\alpha,[\alpha,W^{(1)}]]$ is \textit{quadratic} in $\alpha$. Thus, each coordinate of $(\ad_{\alpha}^2)^*(\eta_1+4\eta_2)\in V_1^*$ is an integral linear combination of the ten products $\alpha_i\alpha_j$ for $1\leq i \leq j \leq 4$. It can be shown that when all these ten products are irrational and independent over $\Q$ (which is the case e.g., for almost every $\alpha$), then the last Vandermonde equation $\eta_1+4\eta_2=0$ is forced, and $\eta_0=\eta_1=\eta_2=0$ as the Frantzikinakis--Kuca conjecture predicts. We will therefore have to restrict to a family of rotations $\mathcal{V}$, defined below, which satisfies rational linear dependencies between these $10$ monomials.
\subsection{Rotations annihilating the $n^2$-coefficient}
Consider the variety,
$$\mathcal{V} = \{\alpha \in (0,1)^4 : \alpha_2\alpha_4 = 3\alpha_1\alpha_3 \,\land\, \alpha_1\alpha_4 = 2\alpha_2\alpha_3\}.$$
\begin{lemma}\label{Vproperties}
    For every $\alpha\in \mathcal{V}$ we have $3\alpha_1^2 = 2\alpha_2 ^2$. In particular,
    $$\mathcal{V} = \{(t,\sqrt{\frac{3}{2}}t,u,\sqrt{6}u)\in(0,1)^4 : t,u\in \R\}.$$
\end{lemma}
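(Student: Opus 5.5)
Multiplying the two defining equations of $\mathcal{V}$ and cancelling positive factors gives the relation $3\alpha_1^2=2\alpha_2^2$; everything then comes from that and one substitution. For the identity, take $\alpha\in\mathcal{V}$, so all coordinates lie in $(0,1)$ and are strictly positive. Multiplying the relation $\alpha_2\alpha_4=3\alpha_1\alpha_3$ by $\alpha_1\alpha_4=2\alpha_2\alpha_3$ side by side gives
\[
\alpha_1\alpha_2\alpha_4^2 = 6\,\alpha_1\alpha_2\alpha_3^2 .
\]
Dividing by $\alpha_1\alpha_2>0$ yields $\alpha_4^2=6\alpha_3^2$, so $\alpha_4=\sqrt6\,\alpha_3$ by positivity. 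Substituting this into $\alpha_2\alpha_4=3\alpha_1\alpha_3$ and dividing by $\alpha_3>0$ gives $\sqrt6\,\alpha_2=3\alpha_1$, that is, $\alpha_2=\sqrt{3/2}\,\alpha_1$. Squaring gives $2\alpha_2^2=3\alpha_1^2$, which is the stated identity.

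For the parametrization, the argument above shows that every $\alpha\in\mathcal{V}$ has the form $(t,\sqrt{3/2}\,t,u,\sqrt6\,u)$ with $t=\alpha_1$ and $u=\alpha_3$. For the converse inclusion, take such a point lying in $(0,1)^4$ and check both equations directly. The first is $\alpha_2\alpha_4=\sqrt{3/2}\sqrt6\,tu=3tu=3\alpha_1\alpha_3$. The second is $\alpha_1\alpha_4=\sqrt6\,tu$, while $2\alpha_2\alpha_3=2\sqrt{3/2}\,tu=\sqrt6\,tu$, so they agree.

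Note that membership in $(0,1)^4$ forces $t,u>0$. This positivity is essential in the argument: it lets us divide by the coordinates and take positive square roots. No step is a real obstacle; the only care needed is to keep track of positivity throughout.
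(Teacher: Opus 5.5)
Your proof is correct and essentially the paper's route: multiply the two defining relations and cancel $\alpha_1\alpha_2$ to get $\alpha_4^2=6\alpha_3^2$, then substitute back to obtain $\alpha_2=\sqrt{3/2}\,\alpha_1$. The paper obtains $3\alpha_1^2=2\alpha_2^2$ first, by equating the two expressions for $\alpha_4$; your opening sentence credits multiplication with this identity, though your detailed derivation correctly gets it by squaring at the end. You also verify the reverse inclusion, which the paper leaves implicit.
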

\begin{proof}
    Isolating $\alpha_4$, we get
    $$\frac{3\alpha_1\alpha_3}{\alpha_2} = \frac{2\alpha_2\alpha_3}{\alpha_1}$$ multiplying by $\alpha_1\cdot \alpha_2$ and dividing by $\alpha_3$ gives the first claim. Now let $t=\alpha_1$ and $u=\alpha_3$. We see that $\alpha_2\cdot \alpha_4 = 3tu$ while $\alpha_4 = 2\alpha_2 \cdot \frac{u}{t}.$ Multiplying both equations we get $\alpha_4^2 = 6u^2\Rightarrow \alpha_4 = \sqrt{6}u.$ Substituting in one of the equations we get that $\alpha_2 = \sqrt{\frac{3}{2}}t$, as required.
\end{proof}
Consider the map $\ad_{\alpha}^2:\f\rightarrow V_3$, and note that it factors through $V_1$ (it is trivial on $V_2\oplus V_3$). Let $(\ad_{\alpha}^2)^* : V^*_3\rightarrow V^*_1$ denote the transpose. Our next goal is to find a non-trivial element in the kernel of this map for $\alpha\in \mathcal{V}$. Recall that $\eta \in \Z^{20} \subseteq V_3^*$ denote the element with the $4$ non-zero coordinates: \[
\langle\eta,e_{4,12}\rangle=2,\quad \langle\eta,e_{1,13}\rangle=-9,\quad
\langle\eta,e_{2,14}\rangle=1,\quad \langle\eta,e_{2,23}\rangle=6,
\]
\begin{lemma}[The top phase $\eta$ vanishes under $(\ad_{\alpha}^2)^*$.]
\label{lem:kernelvariety}
Let $\alpha=\sum_{j=1}^4\alpha_j e_j\in (0,1)^4\subseteq V_1$ and $\eta\in V_3^*$. Then in the dual basis $(e_1^{*},e_2^{*},e_3^{*},e_4^{*})$, the coordinates of $(\ad_\alpha^{2})^*\eta\in V_1^{*}$ are $$3(3\alpha_1\alpha_3-\alpha_2\alpha_4),\quad 3(\alpha_1\alpha_4-2\alpha_2\alpha_3),\quad  -3(3\alpha_1^2-2\alpha_2^2),\quad 0,$$ respectively.
Consequently,
$$
\eta\in\ker(\ad_\alpha^{2})^*\iff\alpha\in\mathcal V$$
\end{lemma}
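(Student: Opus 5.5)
The plan is to compute $(\ad_\alpha^2)^*\eta$ directly, one coordinate at a time. By definition, for $m\in\{1,2,3,4\}$,
\[
\bigl((\ad_\alpha^2)^*\eta\bigr)(e_m)=\langle\eta,[\alpha,[\alpha,e_m]]\rangle=\sum_{k,l=1}^4\alpha_k\alpha_l\,\langle\eta,[e_k,[e_l,e_m]]\rangle .
\]
So it suffices to compute the $16$ integers $c^{(m)}_{kl}:=\langle\eta,[e_k,[e_l,e_m]]\rangle$ for each $m$ and then symmetrize in $(k,l)$. Each coefficient of $\alpha_k\alpha_l$ with $k\neq l$ is $c^{(m)}_{kl}+c^{(m)}_{lk}$, and each coefficient of $\alpha_k^2$ is $c^{(m)}_{kk}$.

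To evaluate $c^{(m)}_{kl}$, I first write $[e_l,e_m]=\pm e_{ij}$ with $i<j$. This gives a sign $-1$ when $l>m$, and the term is $0$ when $l=m$. I then rewrite $[e_k,e_{ij}]$ in the Hall basis using the rule already recorded in the proof of Lemma~\ref{lem:factorization}(i):
\[
[e_k,e_{ij}]=e_{k,ij}\ \ (k\ge i),\qquad [e_k,e_{ij}]=e_{i,kj}-e_{j,ki}\ \ (k<i),
\]
where the second case comes from the Jacobi identity. Finally, I pair the result with $\eta$, which is supported only on $e_{4,12},e_{1,13},e_{2,14},e_{2,23}$ with values $2,-9,1,6$. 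Because the support is so small, most of the $c^{(m)}_{kl}$ vanish immediately. Only those $(k,l)$ whose bracket reaches one of the four supported basis vectors, directly or through the Jacobi correction, contribute.

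As a sanity check, consider $m=4$. The only contributions come from $(k,l)=(2,1)$, giving $+1$ via $e_{2,14}$, and $(k,l)=(1,2)$, giving $e_{2,14}-e_{4,12}$, i.e.\ $1-2=-1$. These cancel, so the fourth coordinate is $0$. I would carry out the analogous bookkeeping for $m=1,2,3$. Each should collapse to two monomials with the asserted coefficients $3(3\alpha_1\alpha_3-\alpha_2\alpha_4)$, $3(\alpha_1\alpha_4-2\alpha_2\alpha_3)$ and $-3(3\alpha_1^2-2\alpha_2^2)$. The main obstacle is purely sign and index bookkeeping: the antisymmetry $e_{ij}=-e_{ji}$, the ordering conventions of the Hall basis, and the Jacobi rewriting when $k<i$. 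I would cross-check against the certified computation in Appendix~\ref{app:cert}.

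For the equivalence, the first direction is immediate. If $\eta\in\ker(\ad_\alpha^2)^*$, the first two coordinates vanish, which is exactly the defining equations $\alpha_2\alpha_4=3\alpha_1\alpha_3$ and $\alpha_1\alpha_4=2\alpha_2\alpha_3$ of $\mathcal V$, so $\alpha\in\mathcal V$. For the converse, let $\alpha\in\mathcal V$. The first two coordinates vanish by definition, and Lemma~\ref{Vproperties} gives $3\alpha_1^2=2\alpha_2^2$, so the third coordinate vanishes as well; that lemma uses $\alpha\in(0,1)^4$ to justify the divisions. The fourth coordinate is identically zero, so $\eta\in\ker(\ad_\alpha^2)^*$.
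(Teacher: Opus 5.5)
Your proposal is correct and follows essentially the same route as the paper. The paper also evaluates $\langle\eta,[\alpha,[\alpha,e_\ell]]\rangle$ one coordinate at a time, but it avoids computing your sixteen constants $c^{(m)}_{kl}$ per coordinate: it reuses $\rho^*_\alpha\eta=(2\alpha_4,-9\alpha_1,\alpha_2,6\alpha_2,-\alpha_1,0)$ from Lemma~\ref{lem:factorization}(i) and pairs it with $[\alpha,e_\ell]$ written in the $e_{ij}$ basis. The cases $m=1,2,3$ that you deferred do work out as you expected; the only nonzero constants are $c^{(1)}_{13}=9$, $c^{(1)}_{24}=-1$, $c^{(1)}_{42}=-2$, $c^{(2)}_{14}=1$, $c^{(2)}_{41}=2$, $c^{(2)}_{23}=-6$, $c^{(3)}_{11}=-9$, $c^{(3)}_{22}=6$, together with the two you found for $m=4$. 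Your explicit appeal to Lemma~\ref{Vproperties} for the vanishing of the third coordinate on $\mathcal V$ supplies a step the paper leaves implicit.
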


\begin{proof}
Recall that $\ad_\alpha^2 e_\ell=[\alpha,[\alpha,e_\ell]]$. From Lemma~\ref{lem:factorization}, we also have that $\rho^*_{\alpha}\eta = (2\alpha_4,-9\alpha_1,\alpha_2,6\alpha_2,-\alpha_1,0)$ in the basis $(e_{12},e_{13},e_{14},e_{23},e_{24},e_{34})$. We have
$$\left<\eta,\ad_{\alpha}^2 e_{\ell}\right> = \rho_{\alpha}^*\eta([\alpha,e_{\ell}])\qquad [\alpha,e_{\ell}] = \sum_{j\not = \ell} \alpha_j [e_j,e_{\ell}],$$ where the last equality follows from bilinearity of the bracket and the fact that $e_{\ell}$ commutes with itself. Now, we shall write $[\alpha,e_{\ell}]$ in the basis $(e_{12},e_{13},e_{14},e_{23},e_{24},e_{34})$ and then apply $\rho^*_{\alpha}{\eta}$ (i.e., take an inner product with $(2\alpha_4,-9\alpha_1,\alpha_2,6\alpha_2,-\alpha_1,0)$.
\[
\begin{aligned}
\ell=1:&\ \ [\alpha,e_1]=-\alpha_2e_{12}-\alpha_3e_{13}-\alpha_4e_{14}
&&\!\!\!\mapsto\ -\alpha_2(2\alpha_4)-\alpha_3(-9\alpha_1)-\alpha_4(\alpha_2)=3(3\alpha_1\alpha_3-\alpha_2\alpha_4),\\
\ell=2:&\ \ [\alpha,e_2]=\alpha_1e_{12}-\alpha_3e_{23}-\alpha_4e_{24}
&&\!\!\!\mapsto\ \alpha_1(2\alpha_4)-\alpha_3(6\alpha_2)-\alpha_4(-\alpha_1)=3(\alpha_1\alpha_4-2\alpha_2\alpha_3),\\
\ell=3:&\ \ [\alpha,e_3]=\alpha_1e_{13}+\alpha_2e_{23}-\alpha_4e_{34}
&&\!\!\!\mapsto\ \alpha_1(-9\alpha_1)+\alpha_2(6\alpha_2)-\alpha_4(0)=-3(3\alpha_1^{2}-2\alpha_2^{2}),\\
\ell=4:&\ \ [\alpha,e_4]=\alpha_1e_{14}+\alpha_2e_{24}+\alpha_3e_{34}
&&\!\!\!\mapsto\ \alpha_1(\alpha_2)+\alpha_2(-\alpha_1)+\alpha_3(0)=0 .
\end{aligned}
\]
We conclude that $\eta\in \ker (\ad_{\alpha}^2)^*\iff \alpha\in \mathcal V$, as required.
\end{proof}
\subsection{Computing the final integral}
From now on we fix $\alpha \in \mathcal{V}$ to be chosen later, $\chi\in \Z^{4} =  \widehat{\T^4}$ to be chosen later, and let $\eta, f_0,f_1$ and $f_2$ be as in the previous sections. We compute the final integral \eqref{finala} explicitly for these choices. To do so we need to compute $\Psi_n(x^{(1)}) = \Psi_n(x^{(1)},0,0).$
For an orbit point $\alpha^m\ast x$ with $x=(x^{(1)},0,0)$ put
\[
u^{(m)}_\ell:=m\alpha_\ell+x^{(1)}_{\ell}\quad(\text{the un-floored layer-$1$ coordinate}),\qquad
\phi^{(m)}_\ell:=\lfloor u^{(m)}_\ell\rfloor=\lfloor m\alpha_\ell+x^{(1)}_{\ell}\rfloor,
\]
and let $P(m):=\bigl\langle\eta, \tred^{(3)}(\mu(\tilde{\alpha}_m,x)) \bigr\rangle$ be the top-layer phase of that
single point. 

The next proposition is a direct (but very long) computation (see Appendix~\ref{app:cert}).

\begin{proposition}\label{prop:Dn}
Let $m$ be arbitrary and for the sake of notational simplicity write $\phi_{\ell}^{(m)}=\phi_{\ell}$ and $u^{(m)}_{\ell} = u_{\ell}$ for all $1\leq \ell \leq 4$. Then for any $\alpha=(\alpha_1,\dots,\alpha_4)$,
\[
P(m)=-\frac{9}{2}\,\phi_1^{2}u_3+3\,\phi_2^{2}u_3+\phi_1\phi_2\,u_4+2\,\phi_1\phi_4\,u_2
+\phi_1A_1+\phi_2A_2+\phi_4A_4+A_0,
\]
where, writing $\sigma:=m^2+m$,
\[
\begin{aligned}
A_1&=-\tfrac12(9\alpha_1\alpha_3+\alpha_2\alpha_4)\,\sigma+m\,(9\alpha_3u_1+\alpha_4u_2)-2u_2u_4+\tfrac92u_3,\\
A_2&=\tfrac12(\alpha_1\alpha_4+6\alpha_2\alpha_3)\,\sigma-m\,(\alpha_4u_1+6\alpha_3u_2)-3u_3,\\
A_4&=\alpha_1\alpha_2\,\sigma-2\alpha_2\,m\,u_1,
\end{aligned}
\]
and $A_0$ is the floor-free part and a polynomial of degree $3$ in $m$ whose coefficients are
products of the $\alpha_j$ with the $u_{\ell}$ (fully recorded below).
Consequently, $$\Psi_n(x^{(1)}) =\left<\chi,x^{(1)}\right> -2P(n) + P(2n) \pmod 1.$$ 
\end{proposition}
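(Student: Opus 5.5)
The plan is to compute $P(m)=\langle\eta,\tred^{(3)}(\mu(\tilde\alpha_m,x))\rangle$ for $x=(x^{(1)},0,0)$ by the same reduction used in the proof of Lemma~\ref{Psinlemma}. Write $g_m=\alpha^m\ast\tau(x)$, $s_m=\tau^{-1}(g_m)$, and let $c_m=\tau^{-1}(\gamma_m)\in\Z^{30}$ be the integer correction produced by Lemma~\ref{lem:reduceFD}. Then \eqref{eq:t3decomp} gives
\[
P(m)\equiv\langle\eta,s_m^{(3)}\rangle+\langle\eta,Q(s_m^{(1)},s_m^{(2)},c_m^{(1)})\rangle \pmod 1.
\]
This holds because $\langle\eta,c_m^{(3)}\rangle\in\Z$. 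The Proposition therefore reduces to three explicit computations:
\begin{enumerate}
\item[(a)] the uncorrected top-layer coordinate $s_m^{(3)}$;
\item[(b)] the layer-$2$ coordinate $s_m^{(2)}$;
\item[(c)] the polynomial $Q$, evaluated at $c_m^{(1)}=-\phi^{(m)}$.
\end{enumerate}

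\emph{Step 1: floor-free data.} I will compute $s_m=\mu(\tilde\alpha_m,x)$ from Corollary~\ref{cor:secondkind} and the layer formulas \eqref{mu2}--\eqref{mu3}. Equivalently, I can take \eqref{eq:orbit} with $W=\log\tau(x^{(1)},0,0)$ and convert first-kind to second-kind coordinates using the formula for $\log\tau(s)$ from the proof of Corollary~\ref{cor:secondkind}. Everything here is polynomial in $m$ and $x^{(1)}$. Rewriting $x^{(1)}_\ell=u_\ell-m\alpha_\ell$ turns it into a polynomial in $m$, the $\alpha_j$ and the $u_\ell$, of degree at most $3$ in $m$. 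Its pairing with $\eta$ (together with any floor-free contribution in Step~3) is what becomes $A_0$. Nothing subtle happens in this step; it is bookkeeping.

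\emph{Step 2: structure of $Q$.} Next I make $Q(y^{(1)},y^{(2)},c^{(1)})$ explicit, following the reordering argument in the proof of Theorem~\ref{secondcoor:thm}. Its pieces are:
\begin{enumerate}
\item[(i)] the bilinear term $[y^{(2)},c^{(1)}]$, i.e.\ $-\rho_{c^{(1)}}(y^{(2)})$;
\item[(ii)] a bilinear $V_1\times V_1$ part, coming from the weight-$3$ corrections in reordering $\tau_1(y^{(1)})\tau_1(c^{(1)})$, including the $\tfrac12$-shifts built into $Y_{ij}$ and the $\binom{c}{2}$-type terms seen in the proof of Theorem~\ref{adaptedbasis};
\item[(iii)] a trilinear $V_1^{\otimes 3}$ part.
\end{enumerate}
Pairing with $\eta$ kills most terms, since $\eta$ is supported only on $e_{4,12},e_{1,13},e_{2,14},e_{2,23}$. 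I will use the column computation of Lemma~\ref{lem:factorization}(i), namely $\rho^*_\delta\eta=(2\delta_4,-9\delta_1,\delta_2,6\delta_2,-\delta_1,0)$, to evaluate (i) immediately. In that computation the paired variables are $\phi_1,\phi_2,\phi_4$ only, never $\phi_3$. This explains why only $\phi_1,\phi_2,\phi_4$ carry coefficients $A_i$ and why no $\phi_3$ appears.

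\emph{Step 3: collecting terms.} I then substitute $y^{(1)}=u$ and $c^{(1)}=-\phi$, together with the explicit $s_m^{(2)}$ from Step~1, and sort terms by their degree in $\phi$.
\begin{enumerate}
\item[(i)] The quadratic-in-$\phi$ part should come out as $-\tfrac92\phi_1^2u_3+3\phi_2^2u_3+\phi_1\phi_2u_4+2\phi_1\phi_4u_2$. It arises from the trilinear piece with two $c$-slots, plus the $\binom{c}{2}$ corrections.
\item[(ii)] The linear-in-$\phi$ part gives $A_1,A_2,A_4$. The $\sigma=m^2+m$ terms come from $s_m^{(2)}$, since $-\binom m2\alpha_i\alpha_j$ plus the cross terms in $P(s^{(1)}(\alpha^m),x^{(1)})$ rewritten in the $u$'s produce $m^2+m$ combinations. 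The $m\,\alpha_ju_\ell$ and $u_\ell u_k$ terms come from the remaining pieces.
\end{enumerate}
Terms cubic in $\phi$ pair to integers, or vanish on the support of $\eta$, and are dropped modulo~$1$. I expect this sign- and coefficient-tracking to be the main obstacle. A single sign convention in the Jacobi rewriting or in $Y_{ij}$ changes the coefficients, so I would check the final formula by computer (Appendix~\ref{app:cert}) against direct numerical evaluation of $\tau^{-1}(\mathrm{red}(\alpha^m\ast x))$ at random points.

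\emph{Step 4: the consequence.} With $(\eta_0,\eta_1,\eta_2)=(\eta,-2\eta,\eta)$ and $x^{(2)}=x^{(3)}=0$, I evaluate \eqref{chara}. The contribution of $f_0$ is $\langle\eta,x^{(3)}\rangle=0$, i.e.\ $P(0)\equiv0$, because $\phi^{(0)}=0$, $u=x^{(1)}$ and the $m=0$ point involves no reduction of the top layer beyond an integer. The contributions of $f_1$ and $f_2$ are $-2P(n)$ and $P(2n)$. Adding $\langle\chi,x^{(1)}\rangle$ yields $\Psi_n(x^{(1)})\equiv\langle\chi,x^{(1)}\rangle-2P(n)+P(2n)\pmod 1$.
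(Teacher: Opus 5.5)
Your plan follows the paper's proof. You reduce modulo $1$ to $\langle\eta,s_m^{(3)}\rangle+\langle\eta,Q(s_m^{(1)},s_m^{(2)},-\phi)\rangle$, expand this explicitly (in practice by computer), sort by degree in $\phi$, and finish with $P(0)=0$ and the weights $(\eta,-2\eta,\eta)$ at times $0,n,2n$.

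One justification in your Step~2 is wrong, and you should not use it to discard terms early. The column computation $\rho^*_\phi\eta=(2\phi_4,-9\phi_1,\phi_2,6\phi_2,-\phi_1,0)$ shows only that the single bracket $[s_m^{(2)},-\phi]$ is free of $\phi_3$. It does not explain why $\phi_3$ is absent from $P(m)$, because the rest of $Q$ is built from pieces that individually do contain $\phi_3$:
\begin{itemize}
\item The BCH term $\tfrac1{12}\langle\eta,[u,[u,-\phi]]\rangle$ contributes $\tfrac34\phi_3u_1^2-\tfrac12\phi_3u_2^2$. This is cancelled only by $\langle\eta,\Lambda^{(3)}(u)+\Lambda^{(3)}(-\phi)-\Lambda^{(3)}(u-\phi)\rangle$, the weight-$3$ parts of $\log\tau_1$.
\item The $\phi_1\phi_3u_1$ and $\phi_2\phi_3u_2$ terms likewise cancel only across several brackets.
\end{itemize}
This is the cancellation the paper alludes to. So the absence of $\phi_3$ is an output of your full expansion in Step~3, not a consequence of the column computation.

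Two minor corrections:
\begin{itemize}
\item There are no cubic-in-$\phi$ terms to drop, since $\mu(0,x')=x'$ forces $Q(0,0,c^{(1)})=0$. Fractional coefficients would not ``pair to integers'' anyway.
\item The $m\,\alpha_ju_\ell$ terms of $A_1,A_2,A_4$ come from the single bracket, via $s^{(2)}_{m,ij}=\tfrac{\sigma}{2}\alpha_i\alpha_j-m\alpha_ju_i$. Only $-2u_2u_4+\tfrac92u_3$ in $A_1$ and $-3u_3$ in $A_2$ come from the rest of $Q$.
\end{itemize}
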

\begin{remark}
   The floor-free part $A_0$ is equal to 
    \[
A_0(m)=c_3\,m^3+c_2\,m^2+c_1\,m,\qquad A_0(0)=0,
\]
where (writing $x_\ell:=x^{(1)}_{\ell}$)
\[
\begin{aligned}
c_3&=-\tfrac12\bigl(3\alpha_1^2\alpha_3-2\alpha_1\alpha_2\alpha_4-2\alpha_2^2\alpha_3\bigr),\\[2pt]
c_2&=-\tfrac14\bigl(-9\alpha_1^2\alpha_3+2\alpha_1\alpha_2\alpha_4+6\alpha_2^2\alpha_3+9\alpha_1\alpha_3-6\alpha_2\alpha_3\\
   &\qquad\quad+18\alpha_1\alpha_3\,x_1-6\alpha_2\alpha_4\,x_1-2\alpha_1\alpha_4\,x_2-12\alpha_2\alpha_3\,x_2-4\alpha_1\alpha_2\,x_4\bigr),\\[2pt]
c_1&=-\tfrac14\bigl(3\alpha_1^2\alpha_3+2\alpha_1\alpha_2\alpha_4-2\alpha_2^2\alpha_3-9\alpha_1\alpha_3+6\alpha_2\alpha_3\\
   &\qquad\quad-18\alpha_1\alpha_3\,x_1-2\alpha_2\alpha_4\,x_1+2\alpha_1\alpha_4\,x_2+12\alpha_2\alpha_3\,x_2+4\alpha_1\alpha_2\,x_4\\
   &\qquad\quad+18\alpha_3\,x_1^2-12\alpha_3\,x_2^2+18\alpha_3\,x_1-12\alpha_3\,x_2-8\alpha_2\,x_1x_4-4\alpha_4\,x_1x_2\bigr).
\end{aligned}
\]
\end{remark}
We have computed $\alpha^m\ast x$ in the first-kind coordinates. For simplicity, we define the \emph{peel} map as the coordinate-change map. Namely,
\begin{equation}\label{peel}
\mathrm{peel}=\tau^{-1}\circ \exp :\R^{30}\rightarrow \R^{30}.
\end{equation} In our setting, $\exp$ is the identity so $\mathrm{peel}$ is merely the inverse of $\tau$. Recall moreover that $\tau(s)=\tau_1(s^{(1)})\ast \tau_2(s^{(2)})\ast \tau_3(s^{(3)})$ from the proof of Theorem~\ref{secondcoor:thm}. Writing $v=v^{(1)}+v^{(2)}+v^{(3)}$ for the layers of $v=\log g$ the peel $s=\mathrm{peel}(v)$ is obtained layer by layer by
$$s^{(1)} = v^{(1)},\qquad s^{(2)} = \left[\log(\tau_1(s^{(1)})^{-1}\ast g)\right]^{(2)},\qquad s^{(3)}=\left[\log\bigl(\tau_2(s^{(2)})^{-1}\ast\tau_1(s^{(1)})^{-1}\ast g\bigr)\right]^{(3)}.$$
\begin{proof}
Throughout, we abbreviate the layer-$1$ coordinates, writing
$x_\ell:=x_\ell^{(1)}$ for $\ell=1,2,3,4$.
Write $g=\alpha^m\ast x$, with $x=\tau(x^{(1)},0,0)$ and let $s(g) = (s^{(1)}(g),s^{(2)}(g),s^{(3)}(g))$ be its second-kind coordinates. Let $\gamma=\tau(c)\in \Gamma$ be the unique element so that $g\ast \gamma \in \mF$. Then,
$$\tred^{(3)}(\mu(\tilde{\alpha}_m,x)) = \mu^{(3)}(s(g),c)$$ and $$P(m) = \left<\eta,\mu^{(3)}(s(g),c)\right> \pmod 1.$$

Recall that $s^{(1)}(g) = m\alpha+x^{(1)}$, this forces 
\[
c^{(1)}=-\phi,\qquad \phi:=\phi^{(m)}=\lfloor m\alpha+x_1\rfloor\in\Z^4 .
\]
Throughout $u_\ell=u^{(m)}_\ell=m\alpha_\ell+x^{(1)}_{\ell}$, so $\phi_\ell=\lfloor u_\ell\rfloor$.
 Recall formula \eqref{mu3}:
\begin{equation}\label{eq:mu3struct}
\mu^{(3)}(y,y')=y^{(3)}+y'^{(3)}+Q\bigl(y^{(1)},y^{(2)},\,y'^{(1)}\bigr),
\end{equation}
where the cross term $Q$ has the following three properties, all established in Theorem~\ref{secondcoor:thm} and are used below.
\begin{itemize}
\item[(a)] $Q$ \emph{does not depend on} $y'^{(2)}$.
\item[(b)] $\mu^{(3)}$ is affine in $y'^{(3)}$ with coefficient $1$. In particular, $Q$ does not depend on $y'^{(3)}$.
\item[(c)] $Q(\cdot,\cdot,0)=0$: every term of $Q$ is a bracket containing $y'^{(1)}$, hence is at least
linear in $y'^{(1)}$ and vanishes when $y'^{(1)}=0$.
\end{itemize}

\noindent\emph{Step 1: the reduction floors above layer $1$ do not affect $P(m)$ modulo $1$.}
Apply \eqref{eq:mu3struct} with $y=s(g)$ and $y'=c$:
\[
\mu^{(3)}(s(g),c)=s^{(3)}(g)+c^{(3)}+Q\bigl(s^{(1)}(g),\,s^{(2)}(g),c^{(1)}\bigr).
\]
By property (a) the layer-$2$ component $c^{(2)}$ of the reduction does not occur on the right-hand side ($Q$ is independent of $c^{(2)}$). By property (b) the layer-$3$ component $c^{(3)}$ appears only as the additive
$c^{(3)}\in\Z^{20}$, and since $\eta$ has integer coefficients,
\[
\left<\eta,\,c^{(3)}\right>\equiv 0\ (\bmod 1).
\]
Finally $c^{(1)}=-\phi$ as noted. Substituting,
\begin{equation}\label{eq:Pphi}
P(m)\equiv \left<\eta,s^{(3)}(g)\right>+\left<\eta,Q\bigl(s^{(1)}(g),s^{(2)}(g),-\phi\bigr)\right> \pmod 1 .
\end{equation}
Both terms are polynomials in $(m,\alpha,x_1)$ assembled from $\Lambda=\log \alpha$ (see Section~\ref{orbit:section}),
the power coordinates $s(\alpha^m)$ (Corollary~\ref{cor:secondkind}), and the group law. Additionally, the second term is a
polynomial in the single layer-$1$ floor $\phi$ of degree $\le 2$ (the bracket length in a $3$-step algebra).
Write
\[
A_0:=\left<\eta,\,s^{(3)}(g)\right>\quad(\text{the floor-free part}),\qquad
\Pi(\phi):=\left<\eta,\,Q\bigl(s(g)^{(1)},s(g)^{(2)},-\phi\bigr)\right>,
\]
so that $P(m)\equiv A_0+\Pi(\phi)$ and $\Pi(0)=0$ by (c).
 
\medskip
\noindent\emph{Step 2: the floor polynomial $\Pi(\phi)$ does not involve $\phi_3$ nor any quadratic coefficient.}
Expanding $Q\bigl(s^{(1)}(g),s^{(2)}(g),-\phi\bigr)$ by the group law and reducing every iterated bracket to the
Hall basis gives (direct and long computation)
\begin{equation}\label{eq:Pi}
\Pi(\phi)=-\tfrac92\phi_1^{2}u_3+3\phi_2^{2}u_3+\phi_1\phi_2u_4+2\phi_1\phi_4u_2
+\phi_1A_1+\phi_2A_2+\phi_4A_4,
\end{equation}
with $A_1,A_2,A_4$ as in the statement. Next, we study each component of $P(m)$ separately:
 
\emph{Quadratic part.} The terms of $Q$ quadratic in $y'^{(1)}=-\phi$ are the double brackets in which
$\phi$ occupies the two outer slots and an unfloored coordinate $u_c$ the inner slot. Ordering the two
floor indices as $a\le b$ and reducing to the Hall basis via the Jacobi relations of
Lemma~\ref{lem:factorization}, the coefficient of the monomial $\phi_a\phi_b\,u_c$ is
\[
\kappa_{a,b}\,\left<\eta,\,[e_b,[e_a,e_c]]\right>,\qquad
\kappa_{a,b}=\begin{cases}\tfrac12,& a=b,\\[2pt] 1,& a<b,\end{cases}
\]
the factor $\tfrac12$ symmetrising a repeated floor slot. This is non-zero only when the Hall-reduced
bracket meets $\operatorname{supp}\eta=\{e_{4,12},e_{1,13},e_{2,14},e_{2,23}\}$. The four matches are exactly
\[
\phi_1^2u_3\Rightarrow\tfrac12\left<\eta,e_{1,13}\right>=-\tfrac92,\quad
\phi_2^2u_3\Rightarrow \tfrac12\left<\eta,e_{2,23}\right>=3,
\]
\[
\phi_1\phi_2\,u_4\Rightarrow\left<\eta,e_{2,14}\right>=1,\quad
\phi_1\phi_4u_2\Rightarrow\left<\eta,e_{4,12}\right>=2,
\]
giving the quadratic component of \eqref{eq:Pi}. In every element of $\operatorname{supp}\eta$ the index $3$ occurs
only in the \emph{inner} (unfloored) slot (namely, $e_{1,1\mathbf 3},e_{2,2\mathbf 3}$) and never as a leading floor
index. Therefore, no quadratic monomial $\phi_a\phi_b$ with $3\in\{a,b\}$ survives, and the quadratic part carries no
$\phi_3$. 

Next, we study the linear component. The terms of $Q$ linear in $\phi$ are of two kinds: the single bracket
$[\,y^{(2)},y'^{(1)}]=[s^{(2)}(g),-\phi]$ (coefficient $1$), and the mixed double bracket
with $\phi$ in the inner slot, together with the layer-$2$ Mal'cev corrections produced by the peel. For the
single bracket, $[e_{ij},e_\ell]=-e_{\ell,ij}$ has leading index $\ell$, so its $\phi_3$ part lands in
$\operatorname{span}\{e_{3,ij}\}=[e_3,V_2]$, which $\eta$ annihilates because $\rho^*_{e_3}\eta=0$ (indeed
$e_3\in K$, Lemma~\ref{lem:factorization}). The inner-slot $\phi_3$ contributions and the peel corrections cancel upon
pairing with $\eta$. Equivalently, the coefficient of $\phi_3$ in \eqref{eq:Pphi} is identically zero. Thus $\Pi$ depends on $\phi_1,\phi_2,\phi_4$ only. Reading off \eqref{eq:Pi}, this establishes that $\Pi$
\begin{itemize}
    \item[(i)] contains no $\phi_3$;
    \item[(ii)] has quadratic part $-\tfrac92\phi_1^2u_3+3\phi_2^2u_3+\phi_1\phi_2u_4+2\phi_1\phi_4u_2$; and
    \item[(iii)] has linear part $\phi_1A_1+\phi_2A_2+\phi_4A_4$.
\end{itemize}
 
\noindent\emph{Step 3: the floor-free part $A_0$.} Recall that $A_0=\left<\eta,s^{(3)}(g)\right>$ is the
$\eta$-pairing of the canonical (un-reduced) top layer of $\alpha^m\ast x$, i.e., $P(m)$ with $\phi=0$. Using
$s(g)=\operatorname{peel}\bigl(\mathrm{BCH}(m\Lambda,W)\bigr)$ with $W=\log\tau(x^{(1)},0,0)$
(\eqref{eq:orbit}, Corollary~\ref{cor:secondkind}), this is a cubic in $m$ whose
coefficients are products of the $\alpha_j$ with the $u_{\ell}$. It has no constant term since $\alpha^0\ast x=x$ is
already reduced, so $A_0(0)=P(0)=0$.\\

\noindent\emph{Concluding the proof.} Combining Steps 1--3, $P(m)\equiv A_0+\Pi(\phi)\pmod 1$ is the displayed formula for
$P(m)$. Finally, the correlation phase at gap $n$ is the sum of the three single-factor phases plus $\left<\chi,x^{(1)}\right>$. Factor $i$
carrying weight $\eta_i=c_i\eta$ at time $k_i n$, where $(c_0,c_1,c_2)=(1,-2,1)$ and $(k_0,k_1,k_2)=(0,1,2)$.
Hence, using the linearity of the inner product and $P(0)=0$, 
\[
\Psi_n(x^{(1)})\equiv \left<\chi,x^{(1)}\right>+\sum_{i=0}^{2}c_i\,P(k_i n)=\left<\chi,x^{(1)}\right>-2P(n)+P(2n)\pmod 1,
\]
as required.
\end{proof}
 
\subsection{Choosing the transcendental rotation}
In the context of Lemma~\ref{lem:factorization}, we need to integrate $\Psi_n$ against the intervals $E_{\ell}^{(n)}$, that are never empty (since $\{n\alpha_\ell\}\neq\tfrac12$ for irrational $\alpha_\ell$) but could in principle be one of two types (see part $(ii)$ of the lemma). Fortunately, for our analysis it suffices to focus on the first option for $E_{\ell}^{(n)}$, as we will be able to reduce matters to this case eventually. We are ready to compute $\Psi_n(x^{(1)})$ explicitly in this region. 
\begin{lemma}[Explicit computation of $\Psi_n(x^{(1)})$]
Set $a_{\ell}:=\lfloor n\alpha_{\ell}\rfloor$ and suppose that $\{n\alpha_{\ell}\}<\frac{1}{2}$ for all $\ell=1,2,4$. Then on the region of integration $E^{(n)}_1\times E_2^{(n)}\times [0,1)\times E_4^{(n)}$, we have that $\Psi_n$ is affine in $x^{(1)}$, and  
$$\Psi_n(x^{(1)}) = C_n+ \chi(x^{(1)}) + s_1 x^{(1)}_1 + s_2 x_2^{(1)} + s_3 x^{(1)}_3 + s_4 x^{(1)}_4 {\pmod 1}$$ where  $C_n:=\Psi_n(0,0,0,0)\in\R$ is a constant independent of $x^{(1)}$, and
\begin{equation}\label{eq:genslopes}
\begin{aligned}
s_1&=n\bigl(18\,a_1\alpha_3-2\,a_2\alpha_4-4\,a_4\alpha_2\bigr)
+n^{2}\bigl(3\alpha_2\alpha_4-9\alpha_1\alpha_3\bigr),\\
s_2&=4\,a_1a_4-2\,a_1\alpha_4 n-12\,a_2\alpha_3 n
+\bigl(\alpha_1\alpha_4+6\alpha_2\alpha_3\bigr)n^{2},\\
s_3&=-9\,a_1^{2}+6\,a_2^{2}\in\Z,\qquad\qquad
s_4=2\bigl(a_1a_2-2\,a_1\alpha_2 n+\alpha_1\alpha_2 n^{2}\bigr).
\end{aligned}
\end{equation}
\end{lemma}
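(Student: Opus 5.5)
The plan is to substitute into the formula of Proposition~\ref{prop:Dn}, using $\Psi_n=\langle\chi,x^{(1)}\rangle-2P(n)+P(2n)$, after first pinning down the floors on the integration region.

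\emph{Step 1: the floors.} On $E^{(n)}_\ell$ with $\theta_\ell=\{n\alpha_\ell\}<\tfrac12$ we have $x_\ell<1-2\theta_\ell$. Hence $\theta_\ell+x_\ell<1$ and $2\theta_\ell+x_\ell<1$, which gives
\[
\phi^{(n)}_\ell=a_\ell,\qquad \phi^{(2n)}_\ell=2a_\ell\qquad(\ell=1,2,4).
\]
This is just Lemma~\ref{lem:factorization}(ii), since $\delta^{(n)}_\ell=0$ there. No $\phi_3$ appears in $P$ (Proposition~\ref{prop:Dn}), so $x_3$ ranges freely over $[0,1)$. With the floors constant, $P(n)$ and $P(2n)$ are polynomials in $x^{(1)}$ on the region, and $\Psi_n$ is a polynomial in $x^{(1)}$ with constant term $C_n=\Psi_n(0)$.

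\emph{Step 2: linearity.} I would show that all terms of degree $\ge2$ in $x$ cancel in $-2P(n)+P(2n)$. These terms come from three sources.
\begin{itemize}
\item The term $-2u_2u_4\phi_1$ in $A_1$. Its $x$-quadratic part is $-2x_2x_4\phi_1$, and the combination gives $-2x_2x_4(-2a_1+2a_1)=0$.
\item The quadratic coefficients in $c_1 m$. Their combination is $-2n+2n=0$.
\item The terms of $A_0$ linear in $x$ with coefficient of $m^2$. These are linear in $x$, so they cause no problem.
\end{itemize}
More generally, any term of the form $(\text{degree-one floor})\times(\text{function of }x)$ scales by $2$ between $m=n$ and $m=2n$ whenever the remaining $m$-dependence is absent. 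This gives the cancellation $-2\cdot(\cdot)+2\cdot(\cdot)$. Terms carrying extra powers of $m$ survive but are only linear in $x$, since each $u_\ell$ enters linearly except in $u_2u_4$ and in $c_1$.

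\emph{Step 3: the slopes.} I would collect the coefficient of each $x_\ell$, using $u_\ell^{(m)}=m\alpha_\ell+x_\ell$.

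For $x_3$: $u_3$ appears only in $-\tfrac92\phi_1^2u_3+3\phi_2^2u_3+\tfrac92\phi_1-3\phi_2$ and in $c_1$ through $\alpha_3$-terms. Then
\[
-2\bigl(-\tfrac92a_1^2+3a_2^2\bigr)+\bigl(-18a_1^2+12a_2^2\bigr)=-9a_1^2+6a_2^2 .
\]
The linear floor terms cancel as $-2a+2a$. The $c_1$ contributions with $x_1^2$ do not involve $x_3$, so they play no role here. This gives $s_3\in\Z$.

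The slopes $s_1,s_2,s_4$ are obtained the same way. They come from:
\begin{itemize}
\item the mixed quadratic floor terms (for example $\phi_1\phi_2u_4$ gives $x_4(-2a_1a_2+4a_1a_2)=2a_1a_2$ toward $s_4$);
\item the $m\,u$ terms in $A_1,A_2,A_4$;
\item the $x$-linear parts of $c_2m^2+c_1m$. Here the combination $-2n+2n$ kills $c_1$, and $c_2$ contributes $(-2n^2+4n^2)=2n^2$ times its $x$-coefficient.
\end{itemize}
For example, for $s_4$ the $c_2$ term gives $2n^2\cdot\alpha_1\alpha_2$, and the $A_4$ term $-2\alpha_2mu_1$ combined with the floors gives the $-4a_1\alpha_2n$ contribution. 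As a sanity check on the $n^2$ coefficients, I would compare them with Lemma~\ref{lem:kernelvariety}, since they should be proportional to $(\ad_\alpha^2)^*\eta$-type combinations.

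\emph{Main obstacle.} The hard part is the bookkeeping in Step 3. In particular, cross terms such as $\phi_1\cdot m\cdot u_\ell$ produce products like $a_1\alpha_3 n$ with a factor $2$ or $4$ depending on which floor scales. I would organize this by tabulating each monomial of $P(m)$ together with its scaling factor $(\phi\mapsto2\phi,\ m\mapsto2m)$ before combining.
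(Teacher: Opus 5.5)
Your plan is essentially the paper's proof. You freeze the floors ($\phi^{(n)}_\ell=a_\ell$, $\phi^{(2n)}_\ell=2a_\ell$ for $\ell=1,2,4$), note that the only $x$-quadratic pieces ($-2\phi_1u_2u_4$ and the quadratic part of $c_1$) cancel in $P(2n)-2P(n)$, and read off $s_\ell=\partial_{x_\ell}P(2n)-2\partial_{x_\ell}P(n)$, with the pure $n^2$ part equal to $2n^2\,\partial_{x_\ell}c_2$.

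There are two small bookkeeping slips. First, $\phi_4\cdot(-2\alpha_2mu_1)$ depends on $x_1$, so it produces the $-4a_4\alpha_2n$ in $s_1$; the $-4a_1\alpha_2n$ in $s_4$ comes instead from $\partial_{x_4}$ of $-2\phi_1u_2u_4$. Second, your sanity check against Lemma~\ref{lem:kernelvariety} only works after writing $a_\ell=n\alpha_\ell-\{n\alpha_\ell\}$: for instance, the pure $n^2$ coefficient $2\alpha_1\alpha_2$ of $s_4$ does not match the $0$ in that lemma, but the total $n^2$ coefficient $2\alpha_1\alpha_2-4\alpha_1\alpha_2+2\alpha_1\alpha_2$ does.
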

Before we prove the lemma we observe that for $\alpha=(t,ct,u,2cu)\in \mathcal{V}$, setting $b_{\ell} :=\{n\alpha_{\ell}\}$ we get
   \begin{equation}\label{eq:bracketV}
\begin{aligned}
s_1&=n\bigl(-18u\,b_1+\sqrt6\,u\cdot 2b_2+\sqrt6\,t\cdot 2b_4\bigr), \\
s_2&=4b_1b_4+n\bigl(-2\sqrt6\,u\,b_1+12u\,b_2-4t\,b_4\bigr),\\
s_3&=6tn\,\bigl(3b_1-\sqrt6\,b_2\bigr)+\bigl(-9b_1^{2}+6b_2^{2}\bigr), \\
s_4&=2b_1b_2+tn\,\bigl(\sqrt6\,b_1-2b_2\bigr).
\end{aligned}
\end{equation}
\begin{proof}
The key observation here is in the fact that when $\{n\alpha_{\ell}\}<\frac{1}{2}$, and $E_{\ell}^{(n)} = [0,1-2\{n\alpha_{\ell}\})$ the dependence on $x$ of the bracket vanishes. Namely, we have
$$\phi_{\ell}^{(n)} = \lfloor n\alpha_{\ell}+x_{\ell}\rfloor = \lfloor n\alpha_{\ell}\rfloor = a_{\ell},\qquad \phi_{\ell}^{(2n)}=2a_{\ell}$$ for $\ell=1,2,4.$ Indeed, the first equality holds because $\{n\alpha_{\ell}\}+x_{\ell}<1$ on the region of integration, and the second because $\delta_{\ell}^{(n)}=0$ on the cell (so $\phi_{\ell}^{(2n)} = 2\phi_{\ell}^{(n)}$). While the index $\ell=3$ is unconstrained, we recall that $\phi_3$ never enters $P(m)$ (by Proposition~\ref{prop:Dn}).\\

Next, we show that $\Psi_n$ is affine in $x^{(1)}$. Recall that by Proposition~\ref{prop:Dn} we have $$\Psi_n(x^{(1)})=\left<\chi,x^{(1)}\right>-2P(n)+P(2n)\pmod 1.$$ We need to show that any monomial of $\Psi_n$ that is quadratic in the coordinates of $x^{(1)}=(x_1,x_2,x_3,x_4)$ vanishes. First, $u_{\ell}=m\alpha_{\ell}+x_{\ell}$ are affine in $x$, and as we mentioned above $\phi_{\ell}$ is now independent of $x$. Thus, the only quadratic in $x$ to $P(m)$ are:
\begin{itemize}
    \item[(a)] The term $-2\phi_1 u_2 u_4$ inside $\phi_1A_1$, whose quadratic part is $-2\phi_1x_2x_4$.
    \item[(b)] The quadratic part of the floor-free cubic $A_0$ (Specifically, $18\alpha_3x_1^2-12\alpha_3 x_2^2 - 8 \alpha_2x_1x_4 -4\alpha_4 x_1 x_2$). The key point is that since it occurs only inside $c^{(1)}$ its coefficient in $m$ is linear. 
\end{itemize}
In $P(2n)-2P(n)$ each cancels by a different mechanism. For $(a)$, since $\phi_1$ is now independent of $x$, the coefficient of $x_2x_4$ is $-2\phi_1^{(n)}$ at time $n$ and $-2\phi_1^{(2n)}$ at time $2n$, so in the difference $P(2n)-2P(n)$ it is
$$-2\phi_1^{(2n)} -2(-2\phi_1^{(n)}) = 4a_1 - 4a_1 =0.$$ For $(b)$ it vanishes because the coefficient in $n$ is linear. This shows that $\Psi_n$ is affine in $x^{(1)}$ and therefore must take the form
$$\Psi_n(x^{(1)}) = C_n + \chi(x^{(1)}) + s_1x_1 + s_2 x_2 +s_3 x_3 +s_4 x_4 \pmod 1$$ where $C_n:=\Psi_n(0).$\\

Next we study the slopes $s_1,s_2,s_3,s_4$. Observe that $s_{\ell} = \partial_{x_{\ell}} \Psi_n = \partial_{x_{\ell}} P(2n)- 2\partial_{x_{\ell}} P(n)$ (where we recall that the floors are now constant), other than $s_3$, where $\Psi_n$ has the additional $\chi(x^{(1)})=3x_3$ summand. We record $s_3$ in full where the rest are identical in spirit. Only the $u_3$-monomial depends on $x_3$ (recall that $A_0$ is independent of $x_3$). Thus,
$$\partial_{x_3} P(m) = -\frac{9}{2}\phi_1^2 + 3\phi_2^2 + \frac{9}{2}\phi_1 - 3\phi_2$$ where the last two summands are from $A_1$ and $A_2$ respectively. Write $\phi^{(n)}=a,\phi^{(2n)}=2a$ we get,
$$
s_3=-2\bigl(-\tfrac92a_1^{2}+3a_2^{2}+\tfrac92a_1-3a_2\bigr)+\bigl(-18a_1^{2}+12a_2^{2}+9a_1-6a_2\bigr)=-9a_1^{2}+6a_2^{2}.
$$
The same type of argument yields $s_1,s_2$ and $s_4$ as claimed in  \eqref{eq:genslopes}. Importantly, the coefficient of $n^2$ comes solely from the $n^2$-coefficient of $A_0$ (weighted $-2n^2 + (2n)^2 = 2n^2$), giving  
$$2n^2 \partial_{x_1} c_2 = n^2 (3\alpha_2 \alpha_4 - 9\alpha_1 \alpha_3), \quad 2n^2 \partial_{x_2} c_2 = n^2(\alpha_1\alpha_4 + 6\alpha_2 \alpha_3),\quad 2n^2 \partial_{x_4} c_2 = 2\alpha_1 \alpha_2 n^2,$$
while $\partial_{x_3}c_2 =0$.

Let $\alpha\in \mathcal{V}$, and write $\alpha=(t,ct,u,2cu)$ with $\alpha_1=t$, $\alpha_3=u$ and $c=\frac{\sqrt{6}}{2}$. Write $a_{\ell}=\lfloor n \alpha_{\ell}\rfloor$ and $b_{\ell} = \{n \alpha_{\ell}\}$ and substitute into \eqref{eq:genslopes}. In each $s_{\ell}$ the coefficient of $n^2$ vanishes on $\mathcal{V}$ (which is exactly the essence of Lemma~\ref{lem:kernelvariety}), leaving only the affine components as in \eqref{eq:bracketV}. 
\end{proof}
\textbf{Notations and preliminaries on continued fractions:} For $x\in \R$, we write $\mathrm{round}(x)$ for the integer in $\Z$ that is closest to $x$. Namely, $\mathrm{round}(x) = \lfloor x+\frac{1}{2}\rfloor$. An irrational $x$ is called \emph{quadratic} if it is the solution to a quadratic equation with rational coefficients. In that case, Lagrange's theorem (see e.g., \cite[Theorem 3.13]{EinsiedlerWardbook}) implies that the continued fraction associated with $x$ is eventually periodic. The continued fraction of a quadratic irrational is a sequence $[a_0;\overline{a_1,a_2,\dots,a_k}]$ where
$$x = a_0 + \frac{1}{a_1+\frac{1}{a_2+\dots}}$$ and the sequence on the right hand side is infinite and periodic in $a_1,\dots,a_k$ (namely $a_{j+k}=a_j$ for all $j\geq 1$). The sequence of \emph{complete quotients} $x_j$ and \emph{partial quotients} $a_j$ of a continued fraction are defined recursively as follows: $x_0 = x$ and $a_0 = \lfloor x\rfloor$. Then, for all $j\geq 0$, $x_{j+1} = \frac{1}{x_j - a_j}$ and $a_{j+1}=\lfloor x_{j+1}\rfloor.$
\begin{theorem}[Transcendental rotations  violating the conjecture]\label{thm:transalign}
Fix $\varphi^{*}\in\bigl(0,{\tfrac{1}{\sqrt6}}\bigr)$ and $c=\sqrt{\frac{3}{2}}$ as before. There exist uncountably many
pairs $(\alpha_1,\alpha_4)$ and an increasing sequence $(n_k)_{k\in \mathbb{N}}$ (in fact, $n_{k+1}\gg n_k^2$) such that  $1,\alpha_1,\alpha_2,\alpha_3,\alpha_4$ are linearly independent over $\Q$, and 
\[
s_1(n_k)\to\frac{\alpha_3}{\alpha_1},\qquad
s_2(n_k)\to\Bigl(8c\,\varphi^{*2}+\frac3c\Bigr)\frac{\alpha_3}{\alpha_1},\qquad s_3(n_k)\rightarrow -3, \qquad 
s_4(n_k)\to 2c\,\varphi^{*2}-\frac1{2c}.
\]
as $k\to\infty$.
\end{theorem}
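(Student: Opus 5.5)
The plan is to build $\alpha$ by a nested-interval (Cantor-type) construction in the two free parameters $t=\alpha_1$ and $u=\alpha_3$ of Lemma~\ref{Vproperties}, so that $\alpha=(t,ct,u,\sqrt6 u)$ with $c=\sqrt{3/2}$ and $\sqrt6=2c$. The first thing to notice is that $s_3=-9a_1^2+6a_2^2$ is an integer, where $a_\ell=\lfloor n\alpha_\ell\rfloor$. So $s_3(n_k)\to-3$ forces, for large $k$,
\[
2a_2^2-3a_1^2=-1 .
\]
This is a Pell-type equation. I will take $(a_1,a_2)=(p_k,q_k)$ from its infinitely many solutions, which come from the eventually periodic continued fraction of the quadratic irrational $c$ (Lagrange's theorem). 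Each such solution satisfies $2(q_k-cp_k)(q_k+cp_k)=-1$, hence $q_k-cp_k=-\frac{1}{4cp_k}+O(p_k^{-3})$.

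\emph{Step 1: the choice of $t$ and $n_k$.} I impose $n_kt=p_k+b_1$ with $b_1=\{n_kt\}\to\varphi^*$. Then
\[
b_2=\{n_kct\}=cb_1+\tfrac{1}{4cp_k}+O(p_k^{-2}),
\]
so $a_2=q_k$ automatically, and $b_1,b_2<\tfrac12$ because $c\varphi^*<\tfrac12$. Substituting into \eqref{eq:bracketV} gives:
\begin{itemize}
\item $s_4=2b_1b_2+tn_k(\sqrt6 b_1-2b_2)=2cb_1^2-\frac{n_kt}{2cp_k}+o(1)\to2c\varphi^{*2}-\frac1{2c}$;
\item the polynomial part of $s_3$ collapses exactly to $-3$, consistent with the Pell condition.
\end{itemize}
For these two limits $t$ only needs to lie in an interval of length $o(1/n_k)$ around $(p_k+\varphi^*)/n_k$.

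\emph{Step 2: the choice of $u$.} Insert $b_2=cb_1+\frac1{4cp_k}+\dots$ into $s_1$. The $u b_1$ terms give $n_k(-18+12)ub_1=-6n_kub_1$, which diverges unless it is compensated by the term $2\sqrt6\,n_ktb_4$. I therefore require
\[
b_4=\{n_k\sqrt6 u\}=\frac{6ub_1}{2\sqrt6\,t}+\frac{\lambda}{n_k}+o\!\left(\frac1{n_k}\right),
\]
with the constant $\lambda$ chosen so that $s_1\to u/t$. This $b_4$ is also below $\tfrac12$ because $\varphi^*<1/\sqrt6$. The same substitution into $s_2$ should then produce $\bigl(8c\varphi^{*2}+\frac3c\bigr)\frac ut$; I would verify this with the $O(1/p_k)$ Pell correction kept explicitly. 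Meeting the requirement on $b_4$ means placing $u$ in an interval of length $o(1/n_k^2)$. Likewise, $t$ must be refined to precision $o(1/n_k^2)$ so that the $u/t$ target and the $t$-dependence in $s_1,s_2$ stabilise.

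\emph{Step 3: the induction.} At stage $k$ I hold closed intervals $I_k\ni t$ and $J_k\ni u$ of length $\asymp n_k^{-2}$. I choose a Pell solution $p_{k+1}$ and an $n_{k+1}$ so large that $n_{k+1}\gg n_k^2$. This guarantees that $I_k$ contains points $t$ with $n_{k+1}t\in p_{k+1}+\varphi^*+o(1/n_{k+1})$, and that $J_k$ contains a full period of $u\mapsto\{n_{k+1}\sqrt6u\}$. Hence both targets are achieved inside $I_{k+1}\subset I_k$ and $J_{k+1}\subset J_k$. Leaving at least two disjoint admissible choices at each stage yields uncountably many limit points $(t,u)$.

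Linear independence of $1,t,ct,u,\sqrt6u$ over $\Q$ fails only on countably many rational relations, each defining a line in the $(t,u)$-plane. I enumerate these relations and, at stage $k$, pick the sub-intervals to avoid the $k$-th line. This is possible since the intervals are two-dimensional boxes and the admissible set has interior.

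\emph{Main obstacle.} I expect the hardest part to be the bookkeeping of error terms in Step 2. The $n^1$-coefficients in $s_1$ and $s_2$ must cancel to $o(1)$. This uses the exact Pell correction $q_k-cp_k$ together with the prescribed value of $b_4$ to precision $o(1/n_k)$. One must also check that the single free parameter $\lambda$ makes $s_1$ and $s_2$ converge simultaneously to the stated values, rather than to two incompatible ones. I would first carry out this expansion symbolically to second order in $1/p_k$ and only then fix the interval widths.
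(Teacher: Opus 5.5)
Your architecture matches the paper's. Pell solutions of $3p^2-2q^2=1$ from the continued fraction of $c$ make $s_3(n_k)=-3$ exactly. You take $b_1\to\varphi^*$ and $b_2=cb_1+\theta_k$ with $\theta_k:=cp_k-q_k$ and $n_k\theta_k\to\frac{1}{4ct}$. Then $b_4$ is tuned to kill the $n_k$-linear terms, and nested boxes with Cantor branching and avoidance of the countably many relation lines finish the argument. Step 1, the limits of $s_3,s_4$, and the independence argument are fine.

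The gap is Step 2, which you rightly flag as the crux but which is miscomputed and left open. Since $2\sqrt6\,c=6$ (not $12$), substituting $b_2$ into $s_1$ leaves $n_k(-12ub_1+2\sqrt6\,t\,b_4)$. The required tuning is therefore $b_4=\frac{\alpha_4}{\alpha_1}b_1+o(1/n_k)$. Your prescription $b_4=\frac{6ub_1}{2\sqrt6\,t}+\frac{\lambda}{n_k}$ is half of this and leaves $s_1(n_k)\approx-6u\varphi^*n_k\to-\infty$ for every $\lambda$. Your worry about compatibility of $s_1$ and $s_2$ disappears once you set
\[
D_k:=n_k\Bigl(b_4-\tfrac{\alpha_4}{\alpha_1}b_1\Bigr)=n_k\Bigl(\tfrac{\alpha_4}{\alpha_1}p_k-\lfloor n_k\alpha_4\rfloor\Bigr).
\]
Then one has exactly
\[
s_1=2\sqrt6\,t\,D_k+2\sqrt6\,u\,n_k\theta_k,\qquad s_2=4b_1b_4-4t\,D_k+12u\,n_k\theta_k .
\]
So a single quantity governs both, and the stated limits force $\lambda=\lim D_k=0$. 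With $D_k\to0$, the identities $4c=2\sqrt6$, $4\sqrt6=8c$ and $b_4\to\sqrt6\,\varphi^*u/t$ give $s_1\to u/t$ and $s_2\to(8c\varphi^{*2}+3/c)\,u/t$.

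This has consequences for the construction. The condition $D_k\to0$ says $\alpha_4/\alpha_1$ lies within $o\bigl(1/(n_kp_k)\bigr)$ of $\lfloor n_k\alpha_4\rfloor/p_k$. Hence the boxes must have side $o(n_k^{-2})$, e.g. $\beta_kn_k^{-2}$ with $\beta_k\to0$ as in the paper, not $\asymp n_k^{-2}$ as in your Step 3. The $u$-target must also be computed from a value of $t$ already pinned to within $o(n_k^{-2})$.

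Your remark to this effect is correct, and it is exactly where the paper's write-up is loose. The paper centres $J_k$ at $m_k/(n_k-\varphi^*/\xi_k)$ with $\xi_k=\mathrm{mid}\,I_{k-1}$, which is only $O(1/n_k)$ from $\alpha_1$. As a result $D_k$ picks up an $O(1)$ term $\approx\varphi^*\frac{\alpha_4}{\alpha_1}\epsilon_k$, where $\epsilon_k$ is the rounding error in the definition of $n_k$. Using $\mathrm{mid}\,I_k$ in place of $\xi_k$ repairs this.

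Finally, $b_4<\frac12$ does not follow from $\varphi^*<1/\sqrt6$. Since $b_4\to\sqrt6\,\varphi^*u/t$, you need an a priori bound on $u/t$ in the initial box, as the paper imposes through $R_0$.
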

\begin{proof}
Write $c=\sqrt{\frac{3}{2}} = \frac{\sqrt{6}}{2}$ and observe that $c$ is a quadratic irrational with eventually periodic continued fraction $c=[1;\overline{4,2}]$. Namely,
$$c= 1 + \frac{1}{4+\frac{1}{2+\frac{1}{4+\dots}}}$$
We denote the convergents by $\frac{h_j}{q_j}$ and the $j^{\mathrm{th}}$ complete quotient by $\zeta_j$. The recursive formula
$$c=\frac{\zeta_{j+1}h_j+h_{j-1}}{\zeta_{j+1}q_j+q_{j-1}}$$ and the fact that $q_j h_{j-1} - h_j q_{j-1} = (-1)^j$ imply the equality
$$q_j |cq_j-h_j| =\frac{1}{\zeta_{j+1}+q_{j-1}/q_j}.$$ The terms on the right hand side are eventually periodic in $j$. Indeed, since $c=[1;\overline{4,2}]$ is eventually $2$-periodic partial quotients, the complete quotients obey the same eventually $2$-periodic recursion $\zeta_{j+1} = a_{j+1} + 1/\zeta_{j+2}$ and take the two periodic values $2+\sqrt{6}=[\overline{4,2}]$, and $1+\frac{\sqrt{6}}{2}=[\overline{2,4}]$. Similarly, the denominator ratios satisfy $$\frac{q_{j-1}}{q_j} = \frac{1}{a_j + \frac{q_{j-2}}{q_{j-1}}},$$ hence are also eventually $2$-periodic. Therefore, along each residue class they converge: $\zeta_{j+1}\rightarrow \zeta$ and $\frac{q_{j-1}}{q_j}\rightarrow \bar{c}'$, the negative of the Galois conjugate $\bar\zeta$ of the limiting complete quotient $\zeta$. On the even class $\zeta=2+\sqrt{6}$, whose conjugate (root of $x^2-4x-2$) is $\overline{\zeta}=2-\sqrt{6}$, so $\bar{c}'=-\bar{\zeta} = \sqrt{6}-2$ and $\zeta-\overline{\zeta} = 2\sqrt{6}=4c.$ Thus,
$$\lim_{j\rightarrow\infty} q_j |cq_j-h_j| =\frac{1}{\zeta-\overline{\zeta}} = \frac{1}{4c}.$$

The difference $\theta_{q_j}:= cq_j -h_j$ alternates in sign. Restricting to the even-index convergents $\mathcal Q=\{q_{j}:\theta_j>0\}$ one has
\begin{equation}\label{eq:cflimit}
{ \lim_{j\rightarrow\infty} q_{2j}\theta_{q_{2j}}= \frac{1}{4c},}\quad
\lim_{j\rightarrow\infty}\frac{q_{2j+2}}{q_{2j}}=5+2\sqrt6,\quad
3q_{2j}^2-2h_{2j}^2=1.
\end{equation}
We deduce the last (Pell-like) equation formally. We have the recursive equation
$$c=\frac{\zeta_{2j+1}h_{2j}+h_{2j-1}}{\zeta_{2j+1}q_{2j}+q_{2j-1}}.$$ Substituting the values $c=\frac{\sqrt{6}}{2}$ and $\zeta_{2j+1} = 2+\sqrt{6}$ and simplifying gives $$2\sqrt{6}q_{2j} + 6q_{2j} + \sqrt{6}q_{2j-1} = 4h_{2j} + 2\sqrt{6}h_{2j} + 2h_{2j-1}.$$ Since $h,q$ are integers, we can deduce from this that
$$\begin{cases} 2q_{2j} + q_{2j-1} = 2h_{2j}\Longrightarrow q_{2j-1} = 2h_{2j}-2q_{2j} \\ 6q_{2j} = 4h_{2j}+2h_{2j-1}\Longrightarrow h_{2j-1}=3q_{2j} - 2h_{2j}.\end{cases}$$
Now, since $q_{2j} h_{2j-1} - h_{2j} q_{2j-1} = 1$, substituting the values for $h_{2j-1}$ and $q_{2j-1}$, elementary manipulations give the desired
$$3q_{2j}^2 - 2h_{2j}^2 = 1.$$

We conclude that on the even class $\mathcal{Q}$, $q_{2j}$ and $h_{2j}$ are indeed the solution set to the Pell-like equation $$\{(q,h)\in \Z^2 : 3q^2-2h^2=1\}.$$ In particular, $s_3 = -3$ exactly (not merely in the limit) on this region. It is important that the set $\mathcal Q$ is infinite and that
$\theta_q\downarrow0$ on it.

Our next goal is to construct nested rectangles $R_k$. Write $f^{*}=(\varphi^{*},c\varphi^{*},
\frac{\alpha_4}{\alpha_1} \varphi^{*})$, for $\varphi^{*}$ as in the theorem. The hypothesis $\varphi^{*}<\tfrac1{\sqrt6}=\tfrac1{2c}$ places the first two coordinates $\varphi^{*},c\varphi^{*}$ of $f^{*}$ in $(0,\tfrac12)$. We choose the first box $R_0$ so that the third coordinate $\varphi^{*}\alpha_4/\alpha_1$ will also be in that region. Namely, we fix a compact box $R_0=I_0\times J_0\subset(0,\infty)^2$ such that
$$\sup_{(\alpha_1,\alpha_4)\in R_0} \frac{\alpha_4}{\alpha_1}\varphi^* <\frac{1}{2},$$
and fix a sequence $\beta_k\downarrow0$ with
$\beta_k\le 2^{-k}$.

We build (inductively) closed rectangles $R_0\supset R_1\supset\cdots$, $R_k=I_k\times J_k$, and integers
$q_k\in\mathcal Q$, $m_k$, and $n_k:=\mathrm{round}\bigl((q_k+\varphi^{*})/\xi_k\bigr)$ where
$\xi_k:=\mathrm{mid}I_{k-1}$ is the middle point of the interval $I_{k-1}$. Let $(w_{k-1},w'_{k-1})$ denote the side lengths of $R_{k-1}$. Set
\[
I_k:=\Bigl[\tfrac{q_k+\varphi^{*}}{n_k}-\tfrac{\beta_k}{n_k^2},\ \tfrac{q_k+\varphi^{*}}{n_k}
+\tfrac{\beta_k}{n_k^2}\Bigr],\qquad
J_k:=\Bigl[\tfrac{m_k}{n_k-\varphi^{*}/\xi_k}-\tfrac{\beta_k}{n_k^2},\ \tfrac{m_k}{n_k-\varphi^{*}/\xi_k}
+\tfrac{\beta_k}{n_k^2}\Bigr],
\]
where $q_k$ and $m_k$ are chosen below so that the closed rectangles $I_k\times J_k$ become nested and non-empty.

Observe that 
$$\mathrm{mid}\, I_k = \frac{q_k+\varphi^*}{n_k} = \xi_k\cdot \frac{q_k+\varphi^*}{q_k+\varphi^*+O(\xi_k)}=\xi_k+O\left(\frac{\xi_k^2}{q_k}\right).$$ In particular, since $\xi_k=\mathrm{mid}I_{k-1}\leq \max I_{k-1}\leq \max I_0$ by the induction hypothesis, we have that
$$\mathrm{mid}\, I_k = \mathrm{mid}\, I_{k-1} + O\left(\frac{(\max I_0)^2}{q_k}\right).$$
Thus, we can choose $q_k$ sufficiently large so that these two middle points are arbitrarily close. Moreover, observe that the length of $I_k$, denoted $w_k$, approaches zero as $q_k\rightarrow\infty$, thus we can choose $q_k$ sufficiently large so that 
$$|\mathrm{mid}\, I_k - \mathrm{mid}\, I_{k-1}|<\frac{w_{k-1}-w_{k}}{2}$$ which implies that  $I_k\subseteq I_{k-1}$. It will be important for later use to note that we have infinitely many choices of $q_k$ with this property. Moreover, choosing $q_k$ sufficiently large with respect to $q_{k-1}$ and $\beta_k$, we can further assume that $n_{k}\gg n_{k-1}^2/\beta_{k-1}^2$.

Next, for $J_k$: we have $\mathrm{mid}\, J_k = \frac{m_k}{n_k-\varphi^*/\xi_k}$ and its radius is $\beta_k/n_k^2$. As before, $J_k\subseteq J_{k-1}$ exactly when
$$|\mathrm{mid}\ J_k - \mathrm{mid}\ J_{k-1}| \leq \frac{\beta_{k-1}}{n_{k-1}^2} - \frac{\beta_k}{n_k^2}.$$ Thus, we shall count the amount of admissible $m_k\in \N$ satisfying $$\left|\frac{m_k}{n_k-\varphi^*/\xi_k} - \frac{m_{k-1}}{n_{k-1}-\varphi^*/\xi_{k-1}}\right| < \frac{\beta_{k-1}}{n_{k-1}^2} - \frac{\beta_k}{n_k^2}.$$ As $m_k$ ranges over $\Z$, the centers of $J_k$ are equally spaced at $\frac{1}{n_k-\varphi^*/\xi_k} < \frac{2}{n_k}$ (for large $k$). Hence, the number of admissible $m_k$ is at least
$$\frac{2\left(\frac{\beta_{k-1}}{n_{k-1}^2}-\frac{\beta_k}{n_k^2}\right)}{\frac{2}{n_k}}-1\geq \frac{\beta_{k-1}n_k}{2n_{k-1}^2}-1,$$ using $\frac{\beta_k}{n_k^2}\leq \frac{1}{2}\frac{\beta_{k-1}}{n_{k-1}^2}$ for $k$ sufficiently large. In particular, whenever $n_k\geq n_{k-1}^2/\beta_{k-1}^2$, the count becomes at least $\frac{1}{2\beta_{k-1}}-1$ and thus approaches infinity as $k\rightarrow\infty$. 

For all of these choices of $q_k$ and $m_k$ we get that $R_k\subseteq R_{k-1}$ and $\mathrm{diam}\ R_k \leq \frac{2\beta_k}{n_k^2}\rightarrow 0$ as $k\rightarrow\infty$. From Cantor's intersection theorem it follows that $\bigcap R_k = \{(\alpha_1,\alpha_4)\}$ is a single point (lying on $\mathcal{V}$ by setting $\alpha_2 = c\alpha_1$ and $\alpha_3=\frac{\alpha_4}{2c}$). Before we prove that there are uncountably many such pairs, we shall pause to show that any pair obtained in this procedure satisfies the properties of the theorem.

Let $(\alpha_1,\alpha_4)$ be the limit point we constructed above. Since $\alpha_1\in I_k$, we have
$$|n_k \alpha_1 - q_k - \varphi^*| \leq n_k\cdot \frac{\beta_k}{n_k^2} =\frac{\beta_k}{n_k}.$$
Since $\varphi^*\in (0,1)$ and $\beta_k/n_k\rightarrow 0$, we get $\lfloor n_k\alpha_1\rfloor = q_k$ and \begin{equation}\label{b1}\{n_k\alpha_1\} = \varphi^* + O(\beta_k/n_k)
\end{equation} for all $k$ sufficiently large. Similarly, since $\alpha_4\in J_k$ we have
$$|(n_k-\varphi^*/\xi_k)\alpha_4 - m_k| \leq \frac{\beta_k}{n_k}.$$
Moreover, since $|\alpha_1 - \xi_k|\leq w_{k-1}\rightarrow 0$, this gives 
\begin{equation}\label{b4}\{n_k\alpha_4\} = \varphi^*\cdot \frac{\alpha_4}{\alpha_1} + O\left(\frac{\beta_k}{n_k}\right).
\end{equation}
Now, taking $\alpha\in \mathcal{V}$ with $\alpha_1,\alpha_4$ as above we have $\alpha_2=c\alpha_1$. Therefore, $$n_k \alpha_2 = c(q_k + \{n_k\alpha_1\}) = h_k + \theta_{q_k} + c\cdot \{n_k\alpha_1\}. $$ Noting that $h_k\in \Z$, $\theta_{q_k}\downarrow 0$ and $c\{n_k\alpha_1\}\rightarrow c\varphi^*<\frac{1}{2}$, we get that for all sufficiently large $k$ we have 
\begin{equation}\label{b2}\{n_k\alpha_2\} = \theta_{q_k} + c\{n_k\alpha_1\}.
\end{equation}
Now that we have estimated $b_1=\{n_k\alpha_1\}$, $b_2=\{n_k\alpha_2\}$ and $b_4=\{n_k\alpha_4\}$, we substitute this in \eqref{eq:bracketV} (recall, $t=\alpha_1, u=\alpha_3$ and $c=\sqrt{\frac{3}{2}}$). Observe that by \eqref{eq:cflimit} and
$q_k=n_k\alpha_1(1+o(1))$,
\begin{equation}\label{theta}
n_k\theta_{q_k}=\frac{q_k\theta_{q_k}}{\alpha_1}\,(1+o(1))\longrightarrow\frac1{4c\,\alpha_1}.
\end{equation} We will see that in all terms, $s_1(n_k),s_2(n_k),s_3(n_k),s_4(n_k)$ the above gives some contribution while all other terms vanish as $k\rightarrow \infty$. We start with $s_1(n_k)$. From \eqref{b2}, then \eqref{b4} and finally \eqref{theta}, we have
\begin{align*}
    s_1(n_k) &= n_k (-18 u\cdot b_1 + 2\sqrt{6} u \cdot b_2 + 2\sqrt{6}t\cdot b_4) \\&=n_k((-18u+2\sqrt{6} cu)b_1 + 2\sqrt{6}u\theta_{q_k} + 2\sqrt{6} t b_4)\\&= n_k (\cancel{-12u\varphi^*} + 2\sqrt{6}u\theta_{q_k} +  \cancel{2\sqrt{6} t \cdot \varphi^*\cdot \frac{2cu}{t}}) + O(\beta_k) \\&\underset{k\rightarrow\infty}{\longrightarrow} \frac{\alpha_3}{\alpha_1}.
\end{align*}
Next we do $s_2$, from \eqref{b2}, and then \eqref{b1} and \eqref{b4} (applied simultaneously) and finally \eqref{theta}, we have:
\begin{align*}
    s_2(n_k) &= 4b_1b_4 + n_k(-2\sqrt{6} u b_1 + 12ub_2 -4tb_4)\\&= 4b_1b_4 + n_k ((12uc-2\sqrt{6}u)b_1 + 12u\theta_{q_k} -4tb_4)\\&=4\cdot \frac{\alpha_4}{\alpha_1}(\varphi^*)^2 + n_k(\cancel{(12uc-2\sqrt{6}u)\varphi^*}  + 12u\theta_{q_k}-\cancel{4t\cdot \frac{\alpha_4}{\alpha_1} \varphi^*})+O(\beta_k)\\&\underset{k\rightarrow\infty}{\longrightarrow} (8c(\varphi^*)^2+\frac{3}{c})\frac{\alpha_3}{\alpha_1}.
\end{align*}
Next, we compute $s_3$. From \eqref{b2}, then \eqref{b1} and finally \eqref{theta} we have
\begin{align*}
s_3(n_k) &= 6tn_k(3b_1 - \sqrt{6}b_2) + (-9b_1^2 + 6b_2^2)\\&=6tn_k((3-c\sqrt{6})b_1 - \sqrt{6}\theta_{q_k}) + (6c^2-9)b_1^2 + { 12c\,\theta_{q_k}b_1 + 6\theta_{q_k}^2}\\& = \cancel{6tn_k(3-c\sqrt{6})\varphi^*} - 6\sqrt{6}tn_k \theta_{q_k} + \cancel{(6c^2-9)(\varphi^*)^2} + { 12c\,\theta_{q_k}b_1 + 6\theta_{q_k}^2} + O(\beta_k)\\&\underset{k\rightarrow\infty}{\longrightarrow} -\frac{6\sqrt{6}}{4c} = -3. 
\end{align*}
Finally, we compute $s_4$. Again by \eqref{b2}, then \eqref{b1} and finally \eqref{theta} we have
\begin{align*}
    s_4(n_k) &= 2b_1 b_2 + tn_k(\sqrt{6}b_1 - 2b_2)\\&= 2cb^2_1+{2b_1\theta_{q_k}} + tn_k((\sqrt{6}-2c)b_1 -2\theta_{q_k})\\&= 2c(\varphi^*)^2 + { 2b_1\theta_{q_k}} + \cancel{tn_k(\sqrt{6}-2c)\varphi^*} - 2tn_k\theta_{q_k} + O(\beta_k)\\&\underset{k\rightarrow\infty}{\longrightarrow} 2c(\varphi^*)^2 - \frac{1}{2c}.
\end{align*}
To complete the proof it suffices to show that the $1,\alpha_1,\alpha_2,\alpha_3,\alpha_4$ chosen above are indeed independent over $\Q$.\\ Unfortunately, this is not automatic and we would need to refine the construction above. First, recall that $(\alpha_1,\alpha_2,\alpha_3,\alpha_4)\in \mathcal V$ which implies that $\alpha_2 = c\alpha_1$ and $\alpha_4 = 2c\alpha_3$. Dependence over $\Q$ means that $$k_0 + k_1 \alpha_1 + k_2 \alpha_2 + k_3\alpha_3 +k_4 \alpha_4 = 0 $$ for some integers $k_0,k_1,k_2,k_3,k_4\in \Z$, which in $\mathcal{V}$ gives
\begin{equation}\label{Qdependence}k_0 + (k_1+k_2\cdot c) \alpha_1 + (k_4 + \frac{k_3}{2c})\alpha_4=0.
\end{equation}
We focus on the coefficient of $\alpha_4$ in \eqref{Qdependence}. If $k_4+\tfrac{k_3}{2c}\neq0$, then
$(\alpha_1,\alpha_4)$ lies on a fixed line of algebraic slope: $-(k_1+k_2c)/(k_4+\tfrac{k_3}{2c})$. If
$k_4+\tfrac{k_3}{2c}=0$, then, since $\tfrac1{2c}=\tfrac1{\sqrt6}\notin\Q$, we must have $k_3=k_4=0$, and
\eqref{Qdependence} forces $\alpha_1$ to a fixed algebraic value. Hence a $\Q$-dependence places
$(\alpha_1,\alpha_4)$ on one of \emph{countably many} algebraic-slope lines $\ell_1,\ell_2,\dots$, or on one of
\emph{countably many} vertical lines (where $\alpha_1$ is algebraic). It therefore suffices to produce the limit
point $(\alpha_1,\alpha_4)$ off all of these lines, which we achieve by branching the nested-box construction into
a \emph{Cantor scheme}. In fact, we will produce uncountably many of those.

At stage $k$ we have freedom in the choice of $q_k\in\mathcal Q$ and of the center
index $m_k$, and we use it in two ways.

\emph{Branching.} As computed above, the center $(q_k+\varphi^{*})/n_k$ of $I_k$ equals
$\xi_k+O(\xi_k^2/q_k)$. Two admissible choices $q_k'<q_k''$ thus yield intervals $I_k',I_k''$ whose
centers differ by $\Theta\bigl(\xi_k^2(1/q_k'-1/q_k'')\bigr)$, while their combined radii are at most
$2\beta_k/(n_k')^2=O\bigl(\beta_k\xi_k^2/q_k'^2\bigr)$. Hence, the center gap exceeds the combined radii
once $q_k''/q_k'$ is sufficiently large, guaranteeing that $I_k',I_k''$ are disjoint. Selecting such a pair produces two
refinements $R_k',R_k''\subset R_k$ with disjoint $\alpha_1$-projections. Iterating over all stages gives a
Cantor scheme indexed by $\{0,1\}^{\N}$, in which distinct branches separate at some stage and hence
converge to distinct $\alpha_1$. The set of limit points is therefore uncountable and injects into the
$\alpha_1$-axis. Fortunately, since only countably many reals are algebraic, all but countably many branches yield a
transcendental $\alpha_1$, ruling out the vertical lines.

\emph{Avoiding $\ell_k$.} Within each refinement we choose $m_k$ so that $J_k$ avoids $\ell_k$. Over the interval
$I_k$, of length $2\beta_k/n_k^2$, the line $\ell_k$ varies in the $\alpha_4$-direction by at most
$|\mathrm{slope}(\ell_k)|\cdot 2\beta_k/n_k^2$, so it meets the column above $I_k$ in an $\alpha_4$-band of that length. The admissible centers $m/(n_k-\varphi^{*}/\xi_k)$ are spaced at least $1/n_k$ apart, so at most
\[
|\mathrm{slope}(\ell_k)|\cdot 2\beta_k/n_k^2\cdot n_k+1=2|\mathrm{slope}(\ell_k)|\,\tfrac{\beta_k}{n_k}+1=O(1)
\]
of them fall in the band. Since the total number of admissible centers is unbounded
(as in the construction of $J_k$ above), all but $O(1)$ of them give $J_k$ disjoint from $\ell_k$, and we take $m_k$
to be one of these.

Performing both steps at every stage yields an uncountable family of limit points
$(\alpha_1,\alpha_4)\in\mathcal V$, each with $\alpha_1$ transcendental and lying off every line $\ell_j$. For any
such point $1,\alpha_1,\alpha_2,\alpha_3,\alpha_4$ are independent over $\Q$, which completes the proof.

\end{proof}
\section{Combining everything together}
In this section we complete the proof by putting everything together. The setting: Let $G=(\f_{4,3},\ast)$ be the connected, simply connected free $3$-step nilpotent Lie group on four generators $e_1,\dots,e_4$, and let $\Gamma=\left<e_1,e_2,e_3,e_4\right>$ be the lattice of Section~\ref{corresponding Lie algebra:section}. By Theorems~\ref{secondcoor:thm} and~\ref{adaptedbasis} the strong Mal'cev basis $\Y$ is adapted to $\Gamma$ and the second-kind coordinate map $\tau$ has a polynomial, integer-valued group law. By Lemma~\ref{lem:identification} $\tau$ identifies $(G/\Gamma,\mu_{G/\Gamma})$ with $(\T^{30},m_{\T^{30}})$ compatibly with the layer splitting $\T^{30}=\T^{4}\times\T^{6}\times\T^{20}$. Let $\alpha=(t,ct,u,2cu)\in\mathcal V$ and the sparse sequence $(n_k)_{k\in\N}$, $n_{k+1}\gg n_k^{2}$, be those given by Theorem~\ref{thm:transalign}, and set $\alpha:=\prod_{i=1}^4\exp(\alpha_iY_i)$, with $R_\alpha$ the corresponding nilrotation.

Fix the characters $\chi:=(0,0,3,0)=3e_3^{*}\in\widehat{\T^{4}}$ and $\eta\in\Z^{20}=\widehat{\T^{20}}$ as in Lemma~\ref{lem:factorization}, (i.e., supported on $(e_{4,12},e_{1,13},e_{2,14},e_{2,23})$ with values $(2,-9,1,6)$). Let $f_0,f_1,f_2\in L^\infty(\mu_{G/\Gamma})$ be the characters with layer data
\[
f_0=(\chi,0,\eta),\qquad f_1=(0,0,-2\eta),\qquad f_2=(0,0,\eta),
\]
lifted to $G/\Gamma$.

We verify that the conditions of the conjecture are satisfied. First, by Theorem~\ref{thm:transalign}, $1,\alpha_1,\alpha_2,\alpha_3,\alpha_4$ are independent over $\Q$. Therefore, the induced action on the torus $G/G_2\Gamma$ is ergodic and so by Green's theorem \cite{AuslanderGreenHahn}, $G/\Gamma$ is an ergodic $3$-step nilsystem. Moreover, since $\eta\not = 0$, we have in fact that all of our functions are orthogonal to $Z_2(\XX)=G/G_3\Gamma$.

Next we show that $a(n)$ does not vanish. By \eqref{chara}, $a(n)=\int_{\T^{30}}e(\Psi_n(x))\,dx$ where $\Psi_n$ is the total phase of the three nilcharacters. We integrate layer by layer.
\begin{itemize}
\item \emph{Layer $3$.} The variable $x^{(3)}$ enters $\Psi_n$ only linearly, with integer coefficient $\eta_0+\eta_1+\eta_2 = \eta-2\eta+\eta=0.$ Thus $\Psi_n$ is independent of $x^{(3)}$. 
\item \emph{Layer $2$.} By Lemma~\ref{Psinlemma}, once $\eta_1+2\eta_2=0$ (here $-2\eta+2\eta=0$) the phase is affine in $x^{(2)}$ with slope $\rho^{*}_{\delta^{(n)}}\eta$, where $\delta^{(n)}=\phi^{(2n)}-2\phi^{(n)}$. Integrating $x^{(2)}$ (Corollary~\ref{cor:explicit}) leaves
\[
a(n)=\int_{[0,1)^4}\mathbf 1_{\{\delta^{(n)}(x^{(1)})\in K\}}\,e\bigl(\Psi_n(x^{(1)})\bigr)\,dx^{(1)},\qquad
K=\{v\in\Z^4:\rho^{*}_v\eta=0\}.
\]
\item \emph{The choice of $\eta$.} By Lemma~\ref{lem:factorization}(i), $K=\Z e_3$, so the indicator depends only on $x^{(1)}_1,x^{(1)}_2,x^{(1)}_4$ and factorizes, by parts (ii)--(iii) (and Proposition~\ref{deltacoord}),
\[
a(n)=\int_{E_n^{(1)}\times E_n^{(2)}\times[0,1)\times E_n^{(4)}}e\bigl(\Psi_n(x^{(1)})\bigr)\,dx^{(1)},
\]
which is \eqref{finala}. Taking $k$ sufficiently large, $b_\ell:=\{n_k\alpha_\ell\}<\tfrac12$ for $\ell=1,2,4$, so $E_{n_k}^{(\ell)}=[0,1-2b_\ell)$.
\item \emph{The choice of $\alpha\in\mathcal V$.} By Lemma~\ref{lem:kernelvariety}, $\alpha\in\mathcal V$ is exactly the condition $\eta\in\ker(\ad_\alpha^{2})^{*}$ which annihilates the $n^{2}$-coefficient. By Proposition~\ref{prop:Dn} and \eqref{eq:genslopes}, the phase is then affine in $x^{(1)}$,
\[
\Psi_n(x^{(1)})\equiv C_n+\underbrace{3x_3}_{\chi}+s_1x_1+s_2x_2+s_3x_3+s_4x_4\pmod1,
\]
with $C_n\in\R$ and slopes $s_\ell$ as in \eqref{eq:genslopes}, and on $\mathcal V$ they take the form \eqref{eq:bracketV}.
\end{itemize}
We can take the constant $C_n$ out since $e(C_n)\in S^1$. The variable $x_3$ ranges over all of $[0,1)$  (i.e., $E_n^{(3)}=[0,1)$) and the coefficient is $s_3+3$, for every other $\ell\in\{1,2,4\}$, while the variable $x_{\ell}$ ranges over $E_n^{(\ell)} = [0,1-2b_{\ell})$. Computing the integral $$\int_0^{1-2b_{\ell}}e(s_{\ell}x_{\ell})\,dx_{\ell} = \frac{e(s_\ell(n)(1-2b_{\ell}(n)))-1}{2\pi i s_{\ell}(n)},$$ we see that
\begin{equation}\label{eq:closedform}
a(n)=e(C_n)\cdot\mathbf 1_{\{s_3(n)=-3\}}\cdot\prod_{\ell\in\{1,2,4\}}\frac{e\bigl(s_\ell(n)\,(1-2b_\ell)\bigr)-1}{2\pi i\,s_\ell(n)} .
\end{equation}
Next, we pass to the subsequence $(n_k)_{k\in \mathbb{N}}$ from Theorem~\ref{thm:transalign}. Recall that $\lfloor n_k \alpha_1\rfloor = q_k$ and $\lfloor n_k \alpha_2 \rfloor =h_k$ where $h_k/q_k$ are the even-index convergents of $c=\frac{\sqrt{6}}{2}.$ From \eqref{eq:genslopes} 
$$s_3(n_k)=-9q_k^{2}+6h_k^{2}=-3\,(3q_k^{2}-2h_k^{2})=-3$$ Thus, $\mathbf{1}_{s_3(n_k)=-3} = 1$ on our subsequence. Let $b_{\ell}^* = \lim_{k\rightarrow\infty} b_{\ell}$ and $S_{\ell}:=\lim_{k\rightarrow\infty} s_{\ell}(n_k)$. Furthermore, Theorem~\ref{thm:transalign} and \eqref{b1}, \eqref{b2} and \eqref{b4} gives
$$b_1^* = \varphi^*,\quad b_2^* = c\varphi^*,\quad b_4^* = \frac{2cu}{t}\varphi^*,\quad S_1=\frac{u}{t},\quad S_2=\left(8c(\varphi^*)^2 + \frac{3}{c}\right)\cdot \frac{u}{t},\quad S_4 =2c(\varphi^*)^2-\frac{1}{2c}.$$
Putting $\ell_j:=1-2b_j^*$, get
\begin{equation}\label{eq:Lvalue}
\bigl|a(n_k)\bigr|\ \xrightarrow[k\to\infty]{}\ 
L:=\prod_{j\in\{1,2,4\}}\Bigl|\frac{e(S_j\ell_j)-1}{2\pi i\,S_j}\Bigr|
=\prod_{j\in\{1,2,4\}}\frac{\bigl|\sin(\pi S_j\ell_j)\bigr|}{\pi\,|S_j|}.
\end{equation}
It is left to show that $L$ is positive. Namely, that $S_j\not = 0$, and $S_j\ell_j\not\in \Z$. We rule out both by a suitable choice of the family of boxes in Theorem~\ref{thm:transalign}. Choose the initial box $R_0$ so that the ratio $u/t<\frac{1}{4c}$ is small throughout, and fix $\varphi^*\in (0,\frac{1}{\sqrt{6}})$.
\begin{itemize}
    \item Observe that $\ell_1 = 1-2\varphi^*$, $\ell_2 = 1-2c\varphi^*$ and $\ell_4 = 1-\frac{4cu}{t}\varphi^*$ all lie in $(0,1)$. Indeed, $\varphi^*<\frac{1}{\sqrt{6}}=\frac{1}{2c}$ and $b_4^* = \frac{2cu}{t}\varphi^*<\frac{1}{2}.$
    \item Both $S_1,S_2$ are positive, and for $u/t<\frac{1}{4c}$ we have $$S_1\ell_1 < \frac{1}{4c}$$ and $S_2\ell_2 < \frac{5}{6}$, so both lie in $(0,1)$, and in particular are not integers.
    \item $S_4 = 2c(\varphi^*)^2-\frac{1}{2c}\in (-\frac{1}{2c},0)$ for $\varphi^*\in(0,\frac{1}{\sqrt{6}})$, so $S_4\not =0$, and $S_4\ell_4\in (-\frac{1}{2c},0)$ is again not an integer.
\end{itemize}
We conclude that each $\sin(\pi S_j\ell_j)\not=0$ and $S_j\not = 0$. Thus every factor of \eqref{eq:Lvalue} is strictly positive and $L>0$. The conditions on $\varphi^*,t,u$ that we imposed in the beginning are finite and open. In particular, Theorem~\ref{thm:transalign} produces uncountably many $\alpha$ meeting them. Choosing one of those, the proof is now complete.

\appendix
\section{Disproving Leibman's conjecture}\label{Leibmanfalse}
Using the same example we can also disprove the following conjecture of Leibman (see the remark after \cite[Proposition 3.1]{Leibman1}). In this section, since we no longer work with the Lie algebra, it will be convenient to use the notation $[\cdot,\cdot]$ for the \textbf{group commutator}. Namely, $[x,y]=xyx^{-1}y^{-1}$.
\begin{conjecture}[Leibman's conjecture]
  Let $W=N/\Lambda$ be a connected nilmanifold and let $Y = \pi(H)$ be a connected subnilmanifold
of $W$, where $H$ is a connected closed subgroup of $N$ and $\pi:N\rightarrow W$ the quotient map. Let $g:\Z\rightarrow N$ be a polynomial sequence with $g(0)=\mathrm{Id}_N$ such that $g(\Z)Y$ is dense in $W$ and assume that $N$ is generated by its connected component $N^o$ and the elements of $g$. Let $Z$ be the normal closure of $Y$ in $W$; then for any $f\in C(W)$ we have 
$$\lim_{n\rightarrow\infty} \int_{g(n)Y} f\,d\mu_{g(n)Y} - \int_{g(n)Z} f\,d\mu_{g(n)Z}=0.$$
\end{conjecture}
\begin{theorem}
    Leibman's conjecture is false.
\end{theorem}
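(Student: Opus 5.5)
The plan is to realise the correlation sequence of the main theorem as an average over a polynomial orbit of a subnilmanifold, the standard Furstenberg--Leibman device. Take $W=G^3/\Gamma^3$, where $G,\Gamma$ are the free $3$-step group and lattice of Section~\ref{corresponding Lie algebra:section}. Let $H=\Delta(G)=\{(x,x,x):x\in G\}$ and $Y=\pi(H)$; this is a connected subnilmanifold, closed because $\Delta(\Gamma)$ is a lattice in $\Delta(G)$. Let $g(n)=(\mathrm{Id},\alpha^n,\alpha^{2n})$, which is a polynomial (indeed linear) sequence with $g(0)=\mathrm{Id}$, where $\alpha$ is the nilrotation from the main theorem. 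Then for $F=f_0\otimes f_1\otimes f_2$,
$$\int_{g(n)Y}F\,d\mu_{g(n)Y}=\int_{G/\Gamma}f_0(x)f_1(\alpha^nx)f_2(\alpha^{2n}x)\,d\mu_{G/\Gamma}(x)=a(n).$$

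The first step is to identify the objects in the conjecture. By Leibman's/Ziegler's description of the Hall--Petresco group, the closure of $g(\Z)Y$ is the nilmanifold $\pi(\mathrm{HP})$, where $\mathrm{HP}=\{(g_0,g_0g_1,g_0g_1^2g_2):g_i\in G_i\}$. Since we may not want to invoke that directly, we replace $W$ by this sub-nilmanifold: take $N=\mathrm{HP}$ (extended by the elements $g(1)$ if necessary) and $\Lambda=N\cap\Gamma^3$. We then need to check three things.
\begin{itemize}
\item[(i)] $g(\Z)Y$ is dense in $N/\Lambda$. This follows from the ergodicity of $(G/\Gamma,R_\alpha)$ (independence of $1,\alpha_1,\dots,\alpha_4$) and the standard equidistribution results for arithmetic progressions.
\item[(ii)] $N$ is generated by $N^o$ and $g(1)$.
\item[(iii)] The normal closure of $H$ in $N$ contains $\{(\mathrm{Id},\mathrm{Id},g_2):g_2\in G_3\}$ and more generally $\{(g_0,g_0,g_0)\}\cdot(\text{commutators with }g)$. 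Concretely, $[(\mathrm{Id},\alpha^n,\alpha^{2n}),(x,x,x)]$ generates all of $\{(1,g_1,g_1^2g_2)\}$ with $g_1\in G_2$, $g_2\in G_3$.
\end{itemize}
From this we obtain $Z=\pi(\{(g_0,g_0g_1,g_0g_1^2g_2):g_0\in G,\ g_1\in G_2,\ g_2\in G_3\})$.

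The second step is to compute $\int_{g(n)Z}F$. The measure on $g(n)Z$ is the Haar measure of a sub-nilmanifold that is invariant under translation by $(1,g_1,g_1^2)$ with $g_1\in G_2$. Since $Z$ is normal, translating by $g(n)$ keeps us inside a coset of it, and integrating over the $G_3$-fibre of the third coordinate gives
$$\int_{g(n)Z}F=\int f_0(x)\,f_1(\alpha^n x\,g_1)\,\mathbb E(f_2\mid \text{rotations by }G_3)(\cdots)\,dx\,dg_1.$$
Because $f_2=(0,0,\eta)$ has non-zero top-layer character $\eta$, it is orthogonal to $L^2(G/G_3\Gamma)$, so averaging over the central $G_3$-translations kills it. Hence $\int_{g(n)Z}F=0$ for every $n$. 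Since $F$ is only bounded measurable, not continuous, we approximate each $f_i$ in $L^2$ by continuous functions with the same $G_3$-character; such functions exist because the $G_3$-eigenspace decomposition commutes with convolution by smooth bump functions on $G$. The resulting error is uniform in $n$, since each orbit measure projects onto the Haar measure on every coordinate.

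The conclusion is then that $\int_{g(n_k)Y}F-\int_{g(n_k)Z}F=a(n_k)$, and $|a(n_k)|\to L>0$ by the main theorem. This contradicts the conjecture once the continuous approximation is chosen with error below $L/2$.

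The main obstacle is the second step together with the verification of (iii): showing that $Z$ really contains the full top-layer fibre $\{(1,1,g_2):g_2\in G_3\}$, or at least enough of it to annihilate $\eta$. If the normal closure turns out to be smaller, I would instead compute $Z$ explicitly from the commutators $[g(n),h]$ and check only that it contains the central direction $(1,1,\exp(tX))$ for some $X$ with $\langle\eta,X\rangle\ne0$. That alone suffices, since averaging over this one-parameter central subgroup already kills $f_2$.
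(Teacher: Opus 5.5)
Your overall route is the paper's: pass to the Hall--Petresco nilmanifold $N/\Lambda$ (here $N$ is connected and already contains $g(1)$, so (ii) is automatic), get density from Leibman's orbit theorem, identify $Z$, kill the $Z$-integral by averaging $f_2$ along a central $G_3$-direction, and approximate by continuous functions using that all these measures have Haar marginals. Your $G_3$-equivariant convolution is a fine substitute for the paper's Lusin step.

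The gap is in (iii), the step you yourself flag. Your justification is that commutators of $g(n)=(1,\alpha^n,\alpha^{2n})$ with the diagonal generate $\{(1,g_1,g_1^2g_2):g_1\in G_2,\,g_2\in G_3\}$, and this is false. Modulo $G_3$, $[\alpha^n,x]$ only ranges over $\exp([\alpha,V_1])$, a $3$-dimensional subspace of the $6$-dimensional $V_2$. More seriously, your fallback cannot work, and not for a technical reason. The Lie subalgebra generated by $\Delta(\f)$ and $(0,\Lambda,2\Lambda)$ meets $V_3^3$ only in vectors $(X,\,X+Z,\,X+2Z+W)$ with $W\in\ad_\Lambda^2(\f)=\ad_\alpha^2(V_1)$. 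The central character of $F$ on such a vector is $\langle\eta,X\rangle-2\langle\eta,X+Z\rangle+\langle\eta,X+2Z+W\rangle=\langle\eta,W\rangle$. This vanishes by Lemma~\ref{lem:kernelvariety}, because $\alpha\in\mathcal V$. So no central element you can build from $H$ and $g(\Z)$ alone detects $\eta$, and that invisibility is exactly what keeps $a(n_k)$ from decaying.

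What is missing is that the normal closure is taken in all of $N$, which contains elements far outside $\langle H,g(\Z)\rangle$.
\begin{itemize}
\item Since $N=\{(g_0,g_1,g_2):g_0g_1^{-2}g_2\in G_2\}$, the elements $(w,1,1)$, $(1,w,1)$ and $(1,1,w)$ lie in $N$ for every $w\in G_2$.
\item We have $[(h,h,h),(1,1,w)]=(1,1,[h,w])$. As $h\in G$ and $w\in G_2$ vary, these generate $\{1\}\times\{1\}\times G_3$, because $[V_1,V_2]=V_3$.
\item That inclusion is all the vanishing of $\int_{g(n)Z}F$ needs.
\item Conjugating by $(1,k,k^2)$ with arbitrary $k\in G$ gives $(1,[h,k],[h,k]^2)$ modulo $G_3^3$, which supplies the middle layer of your formula for $Z$.
\end{itemize}
This is how the paper proceeds: it obtains $G_3^3\subseteq[H,N]$ from $[(h,h,h),(w,1,1)]=([h,w],1,1)$, and uses conjugation by $(1,k,k^2)$, $k\in G$, for the rest.
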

Let $G$ be the free $3$-step nilpotent group on four generators and let $\Gamma$ be the lattice constructed in this paper, and write $X=G/\Gamma$. Let $Y=\{(x,x,x) : x\in X\}$ and $g(n) = (\mathrm{Id}_G,\alpha^n,\alpha^{2n})$ where $\alpha$ is the transcendental rotation constructed in Theorem~\ref{thm:transalign}. $Y$ is a subnilmanifold of $X^3$, but we can not take $W=X^3$ since $\{g(n)y:n\in \N,y\in Y\}$ is not dense in $X^3$. Thus, we shall first consider the Hall-Petresco group (see e.g., \cite[Section 5.1]{BHK} for a general construction) which in our case is the group
$$N = \{(h_0,h_0h_1,h_0h_1^2h_2) : h_0,h_1\in G,h_2\in G_2\}.$$ We also set $\Lambda := N\cap \Gamma^3.$
\begin{lemma}
We have,
    $$N=\{(g_0,g_1,g_2)\in G^3 : g_0\cdot g_1^{-2}\cdot g_2\in G_2\}.$$
    Moreover, $N$ is a closed, connected, rational subgroup of $G^3$, containing $H:=\{(g,g,g):g\in G\}$ as well as $a:=(1,\alpha,\alpha^2)$. Finally, $g(n)=a^n$ is a polynomial sequence in $N$ with $g(0)=\mathrm{Id}_N$.
\end{lemma}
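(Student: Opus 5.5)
I would prove the set identity first and then read off the remaining properties from it. Write $N'=\{(g_0,g_1,g_2)\in G^3: g_0g_1^{-2}g_2\in G_2\}$. For the inclusion $N\subseteq N'$, take $(h_0,h_0h_1,h_0h_1^2h_2)$ and compute
\[
g_0g_1^{-2}g_2=h_0\,(h_0h_1)^{-2}\,h_0h_1^2h_2 .
\]
Modulo $G_2$ the group $G/G_2$ is abelian, so this reduces to $h_0h_0^{-2}h_1^{-2}h_0h_1^2h_2\equiv 1$. Hence the product lies in $G_2$. For the reverse inclusion, given $(g_0,g_1,g_2)\in N'$, I would set $h_0=g_0$, $h_1=g_0^{-1}g_1$ and $h_2=(h_0h_1^2)^{-1}g_2$. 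It then remains to check that $h_2\in G_2$. Modulo $G_2$ we have $h_2\equiv g_0^{-1}g_0^{2}g_1^{-2}g_2=g_0g_1^{-2}g_2\equiv 1$, again by abelianness of $G/G_2$. This gives the stated description of $N$.

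From this description the structural properties follow. The map $\Theta:G^3\to G/G_2$, $(g_0,g_1,g_2)\mapsto g_0g_1^{-2}g_2G_2$, is a continuous homomorphism, since $G/G_2$ is abelian and $G^3\to G/G_2$, $(g_0,g_1,g_2)\mapsto (g_0G_2)(g_1G_2)^{-2}(g_2G_2)$ is a product of homomorphisms into an abelian group. So $N=\ker\Theta$ is a closed normal subgroup. For connectedness I would use the parametrization $(h_0,h_1,h_2)\mapsto(h_0,h_0h_1,h_0h_1^2h_2)$, which is a continuous surjection from the connected space $G\times G\times G_2$ onto $N$. Here $G_2=\exp(V_2\oplus V_3)$ is connected.

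For rationality I would argue that $G_2\Gamma$ is closed, with $G_2$ rational with respect to $\Gamma$ (it is spanned by the Mal'cev basis elements $Y_5,\dots,Y_{30}$, and $\Gamma_2=\tau(\{0\}\times\Z^{26})$ is cocompact in it). Consequently $\Theta$ is defined over $\Q$ in Mal'cev coordinates: on $G/G_2\cong\R^4$ it reads $x^{(1)}_0-2x^{(1)}_1+x^{(1)}_2$ by \eqref{mu1}, which is a linear map with integer coefficients. Its kernel $N$ therefore meets $\Gamma^3$ in a cocompact lattice. Concretely, $N\cap\Gamma^3$ contains the image of $\Gamma\times\Gamma\times\Gamma_2$ under the parametrization, and the quotient is compact because $N/(N\cap\Gamma^3)$ fibres over tori. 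I expect this rationality step to be the only one needing care; I would handle it by this cocompactness argument rather than by a general theorem.

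The remaining claims are immediate. $H\subseteq N$ because $g\,g^{-2}g=1$, corresponding to $h_1=h_2=1$. $a=(1,\alpha,\alpha^2)\in N$ because $1\cdot\alpha^{-2}\alpha^2=1$, corresponding to $h_0=1$, $h_1=\alpha$, $h_2=1$. Finally $a^n=(1,\alpha^n,\alpha^{2n})=g(n)$ is the orbit of a single element, hence a (linear) polynomial sequence in $N$ with $g(0)=\mathrm{Id}_N$.
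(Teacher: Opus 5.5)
Your proof is correct and follows the paper's argument essentially step by step. You use the same parametrization and the same choice $h_0=g_0$, $h_1=g_0^{-1}g_1$, $h_2=g_1^{-1}g_0g_1^{-1}g_2$; the only change is that you check $h_2\in G_2$ using abelianness of $G/G_2$, where the paper writes $h_2=[g_1^{-1},g_0]\,(g_0g_1^{-2}g_2)$. Closedness and the subgroup property come from the homomorphism to $G/G_2$, and connectedness from the image of $G\times G\times G_2$, exactly as in the paper. For rationality, the paper only asserts that $N\cap\Gamma^3$ is the image of $\Gamma\times\Gamma\times\Gamma_2$; your fibration of $N/(N\cap\Gamma^3)$ over the $8$-torus $\{x_0-2x_1+x_2=0\}/\Z^8$, with fibre $G_2^3/\Gamma_2^3$, actually spells out the cocompactness the paper leaves implicit.
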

\begin{proof}
  Let $h_0,h_1\in G$ and $h_2\in G_2$. Then,
  $$h_0(h_0h_1)^{-2}h_0h_1^2h_2 = [h_0,h_1^{-1}]h_2\in G_2.$$ Conversely, let $(g_0,g_1,g_2)\in G^3$ so that $g_0\cdot g_1^{-2}\cdot g_2\in G_2$. Let $h_0:=g_0$, $h_1:=g_0^{-1}\cdot g_1$ and $h_2 = g_1^{-1} g_0 g_1^{-1} g_2$ then $(g_0,g_1,g_2) = (h_0,h_0h_1,h_0h_1^2h_2)$ and 
  $$h_2 = [g_1^{-1},g_0]\cdot (g_0 g_1^{-2}g_2)\in G_2.$$ This proves the first equality. Now, since the map $(g_0,g_1,g_2)\mapsto g_0\cdot g_1^{-2}\cdot g_2$ is continuous and $G_2$ is closed, $N$ is closed. Moreover, since that same map is a homomorphism modulo $G_2$ we see that $N$ is a subgroup and it is connected since it is the continuous image of the connected group $G\times G\times  G_2$ under $(h_0,h_1,h_2)\mapsto (h_0,h_0h_1,h_0h_1^2h_2).$ The claims that $a\in N$ and that $g(n)=a^n$ is a polynomial sequence in $N$ with $g(0)=\mathrm{Id}_N$ are now immediate. It is left to show that $N$ is a rational subgroup of $G^3$, which follows from the direct computation that 
  $$\Lambda = \{(\gamma_0,\gamma_0\gamma_1,\gamma_0\gamma_1^2\gamma_2) : \gamma_0,\gamma_1\in \Gamma,\ \gamma_2\in \Gamma_2\}$$ and in particular $N/\Lambda\subseteq X^3$ is compact.
\end{proof}
Now, let $W=N/\Lambda$.
\begin{lemma}
   The set $\{g(n)y:n\in \mathbb{N},y\in Y\}$ is dense in $W$. 
\end{lemma}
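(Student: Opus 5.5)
To prove density of $\{g(n)y\}$ in $W=N/\Lambda$, I plan to reduce everything to a statement about the horizontal torus of $N$ and then apply Leibman's criterion. The criterion says that a polynomial orbit, or a union of orbits of a subnilmanifold, is dense in a connected nilmanifold exactly when its projection to the maximal factor torus $N/[N^o,N^o]\Lambda$ is dense. Since $N$ is connected, $N^o=N$, so the torus is $N/N_2\Lambda$ with $N_2=[N,N]$. Because $Y=\pi(H)$ is the orbit of the identity coset under the connected group $H$, the set $\{a^n h\Lambda\}$ is dense in $W$ precisely when the subgroup generated by $a$ and $H$ acts with dense orbit. Equivalently, it suffices to show that the images of $H$ and of $\{a^n\}$ generate a dense subgroup of the torus $N/N_2\Lambda$. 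This is the standard argument used for the Host--Kra/Ziegler cube nilmanifolds, as in \cite[Section 5]{BHK}, and I would follow it closely.

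\textbf{Step 1: identify the abelianization.} Using the parametrization $(h_0,h_0h_1,h_0h_1^2h_2)$, I first compute $N_2=[N,N]$. Commutators of the generators $(h,h,h)$ and $(1,h_1,h_1^2)$ produce $G_2^{\Delta}$ and elements $(1,k,k^{2}\cdot)$ with $k\in G_2$. I then expect to check that
\[
N_2=\{(k_0,k_0k_1,k_0k_1^2k_2): k_0,k_1\in G_2,\ k_2\in G_3\}.
\]
Consequently, the map
\[
N/N_2\;\to\; G/G_2\times G/G_2\times G_2/G_3,\qquad (h_0,h_1,h_2)\mapsto (h_0G_2,\ h_1G_2,\ h_2G_3)
\]
is an isomorphism of abelian Lie groups, namely $\R^4\times\R^4\times\R^6$. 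Under this isomorphism $\Lambda$ maps onto the corresponding integer lattice, since $\Gamma$ has an adapted Mal'cev basis by Theorem~\ref{adaptedbasis}. Verifying that $N_2$ is exactly this group, and not something smaller in the $h_2$-slot, is the main obstacle. I would handle it by computing commutators $[(1,h_1,h_1^2),(1,h_1',h_1'^2)]$ modulo $G_3$, which give $(1,[h_1,h_1'],[h_1,h_1']^4)=(1,k,k^2\cdot k^2)$. Since $G_2=[G,G]$ is spanned by such commutators, this fills the $k_1$-slot. The $k_2$-slot is then filled by commutators of $(1,h_1,h_1^2)$ with elements $(1,1,k)$, $k\in G_2$, which land in $G_3$.

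\textbf{Step 2: the torus images.} In the torus $\T^4\times\T^4\times\T^6$, the image of $H$ is the subtorus $\T^4\times\{0\}\times\{0\}$. The image of $a^n$ is $(0,n\alpha,0)$, where $\alpha$ denotes the first-layer coordinates. The $h_2$-component of $a$ is trivial because $a=(1,\alpha,\alpha^2)$ corresponds to $h_0=1$, $h_1=\alpha$, $h_2=1$. Hence density of the union of orbits reduces to two facts: $\{n\alpha\}$ is dense in $\T^4$, which holds because $1,\alpha_1,\dots,\alpha_4$ are $\Q$-independent by Theorem~\ref{thm:transalign}, and the third factor $\T^6$ is covered.

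The third factor is the subtle point. Neither $H$ nor $a$ moves it directly. However, the closure of the group generated by $H$ and $a$ contains commutators $[(g,g,g),a^n]$, and their $h_2$-components are commutators $[g,\alpha^n]$-type elements of $G_2$ that are not in $N_2$. I therefore plan to invoke Leibman's criterion in its precise form: for a connected closed subgroup $H$ together with a polynomial sequence, orbit density is decided on $N/N_2\Lambda$ only after replacing $N$ by the smallest rational subgroup containing $H$ and $a$. Accordingly, I would show that the closed subgroup generated by $H$ and $a$ is all of $N$. It contains $H$ and the closure of $\{a^n\}$, hence $(1,h_1,h_1^2)$ for $h_1$ in the closure of $\{\alpha^n G_2\}$, which by ergodicity is everything modulo $G_2$. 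Taking commutators of these with $H$ then yields $(1,1,G_2)$ modulo $N_2$. Once this group is known to equal $N$, it acts transitively on $W$, and density of the orbit of $Y$ follows by Leibman's theorem on orbits of polynomial sequences, as in \cite[Section 5]{BHK}.
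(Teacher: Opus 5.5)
Your Step~1 is where the argument breaks, and your own computation already shows it. Write the commutator $[(1,h,h^2),(1,h',h'^2)]\equiv(1,k,k^4)\pmod{G_3^3}$, with $k=[h,h']$, in the form $(k_0,k_0k_1,k_0k_1^2k_2)$. This forces $k_0=1$, $k_1=k$ and $k_2=k^2$, which lies in $G_2$ but not in $G_3$ (as $G_2/G_3\cong\R^6$ is torsion-free).

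Combined with $[(g,g,g),(1,h,h^2)]\equiv(1,k,k^2)$, this puts $(1,1,k^2)$ in $[N,N]$ modulo $G_3^3$. Together with the diagonal elements $(k,k,k)$ and the divisibility of $G_2/G_3$, this gives $[N,N]\supseteq G_2^3$ modulo $G_3^3$. Since $G_2^3\subseteq N$, the commutators $[(g,g,g),(k_0,k_1,k_2)]=([g,k_0],[g,k_1],[g,k_2])$ also give $G_3^3\subseteq[N,N]$. Hence $[N,N]=G_2^3$, not the group you wrote down.

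Equivalently, $(h_0,h_1,h_2)\mapsto h_2G_3$ is not a homomorphism on $N$: the $h_2$-part of a product picks up a cross term $[h_1,h_1']$ modulo $G_3$. So the horizontal torus is $N/G_2^3\Lambda\cong(G/G_2\Gamma)^2=\T^4\times\T^4$, with coordinates $(h_0G_2,h_1G_2)$, and there is no $\T^6$ factor. The ``main obstacle'' you identify (that $N_2$ might be too small in the $h_2$-slot) points the wrong way; it is too big.

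This error inverts the logic of Step~2. With your torus, your own computation shows that $\bigcup_n a^nY$ projects into $\T^4\times\T^4\times\{0\}$, so the torus criterion you invoke would refute density. Your rescue does not repair this. If the smallest rational subgroup containing $H$ and $a$ is $N$, the criterion is still tested on $N/N_2\Lambda$. The fact that this group acts transitively on $W$ says nothing about the specific set $\{a^nh\Lambda\}$, which is not an orbit of that group. For the same reason, your opening equivalence between density of this set and density of the $\langle a,H\rangle$-orbit holds only in the trivial direction.

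Once the abelianization is corrected, the torus computation is short. The image of $a^n(g,g,g)$ is $(\bar g,n\alpha)$, where $\bar g$ is the image of $g$ in $G/G_2\Gamma$. These points are dense since $1,\alpha_1,\dots,\alpha_4$ are $\Q$-independent by Theorem~\ref{thm:transalign}.

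No single orbit $a^ny$ is dense, since its projection stays in $\{\bar g\}\times\T^4$. So you still need a horizontal-torus criterion for the union $\bigcup_n g(n)Y$ of translates of a connected subnilmanifold, not just for single orbits. That is exactly the input the paper takes from \cite[Theorem~6.3]{LeibmanOrbit} (or \cite[Lemma~5.2]{BHK}), and citing it is the whole of the paper's proof.
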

\begin{proof}
We need to prove the equality
   $$\overline{\{g(n)y:n\in \mathbb{N},y\in Y\}} = W.$$
     From the previous lemma, we see that $g(n)=a^n$, $a\in N$ and $H\leq N$. Given all that, the inclusion $\subseteq $ is immediate. The reverse inclusion is less trivial, but it follows from a theorem of Leibman \cite[Theorem 6.3]{LeibmanOrbit} (and also from \cite[Lemma 5.2]{BHK}).
\end{proof}
\subsection{Computing the normal closure}
Next, we need to compute the normal closure $Z$. Equivalently, we need to compute the normal closure of $H$ in $N$.
\begin{proposition}
    The normal closure of $H$ in $N$ is 
    $$\langle\langle H\rangle\rangle^N = \left\{\left(h_1,h_1h_2,h_1h_2^2h_3\right) : \forall_{i=1,2,3}\ h_i\in G_i\right\}.$$
\end{proposition}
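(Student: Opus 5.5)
We prove two inclusions. Write $M:=\{(h_1,h_1h_2,h_1h_2^2h_3): h_i\in G_i\}$. Note that $M$ is just $N$ with the second coordinate restricted. Using the description of $N$ from the previous lemma, we first identify
\[
M=\{(g_0,g_1,g_2)\in N : g_0^{-1}g_1\in G_2\}.
\]
Indeed, if $g_0^{-1}g_1=h_2\in G_2$, then $h_3:=h_2^{-2}g_0^{-1}g_2$ lies in $G_3$. To see this, reduce modulo $G_3$, where $G_2/G_3$ is central. There $g_0g_1^{-2}g_2\equiv g_0(g_0h_2)^{-2}g_2\equiv g_0^{-1}h_2^{-2}g_2$ modulo commutators in $G_3$, and $g_0g_1^{-2}g_2\in G_2$. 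This alone only gives $h_3\in G_2$, so we need more care. We will rewrite the membership condition for $M$ as two conditions: $g_0^{-1}g_1\in G_2$ and $g_0^{-1}g_1^{2}\,g_1^{-1}\dots$. The cleanest route is to check directly that $M$ is a subgroup, normal in $N$, contains $H$, and is generated by conjugates of $H$.

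\emph{Step 1: $M$ is a normal subgroup of $N$ containing $H$.} Clearly $H\subseteq M$ (take $h_2=h_3=1$). For the group and normality properties, we conjugate a generic element $(h_1,h_1h_2,h_1h_2^2h_3)$ by $n=(k_0,k_0k_1,k_0k_1^2k_2)\in N$ and also multiply two such elements. Using the commutator calculus modulo $G_3$ (where $G_2$ is central) and the fact that $G_3$ is central, one re-expresses the result in the form $(h_1',h_1'h_2',h_1'h_2'^2h_3')$. The key identities are $[G,G_i]\subseteq G_{i+1}$ and the Hall--Petresco style fact that $(x,xy,xy^2z)$ is closed under products when $y\in G_i$, $z\in G_{i+1}$; this is the same computation as for $N$, shifted by one in the filtration. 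Concretely, for the cube $(G_1,G_2,G_3)$ we will use that $n^{-1}mn$ has coordinates $k_j^{-1}\cdots$, and that the "second difference'' of the conjugated triple picks up $[k_1,h_2]\in G_3$ and $[k_1,[k_1,h_1]]\in G_3$. These are exactly the allowed error terms.

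\emph{Step 2: $M\subseteq\langle\langle H\rangle\rangle^N$.} Let $K:=\langle\langle H\rangle\rangle^N$. We generate the three layers.
\begin{itemize}
\item[(1)] $H\subseteq K$ gives all $(h_1,h_1,h_1)$.
\item[(2)] Take the commutator of $(g,g,g)\in H$ with $a_h:=(1,h,h^2)\in N$, for $h\in G$. This lies in $K$ and equals $(1,[g,h],[g,h^2])$. Modulo $G_3$, $[g,h^2]\equiv[g,h]^2$, so we get elements of the form $(1,c,c^2z)$ with $c$ ranging over the commutators $[g,h]$, which generate $G_2$. Together with products, this yields $(1,h_2,h_2^2z)$ for all $h_2\in G_2$, with some $z\in G_3$.
\item[(3)] Taking a further commutator of $(1,c,c^2z)$ with $(g,g,g)$ gives $(1,[g,c],[g,c^2])=(1,d,d^2)$ with $d\in G_3$. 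That is, it produces the "linear'' elements in $G_3$.
\end{itemize}
To obtain $(1,1,h_3)$, commute $(g,g,g)$ with $(1,1,h_2)$. The latter element lies in $N$, though we need it in $K$. So instead we commute $a_h$ with $(1,c,c^2z)$, which gives $(1,[h,c],[h^2,c^2])\approx(1,d,d^4)$. Dividing by $(1,d,d^2)$ yields $(1,1,d^2)$. This forces us to track connectedness: since $G_3$ is a connected vector group, square roots exist, and the elements $[h,c]$ generate $G_3$. Hence we get all $(1,1,h_3)$. Combining the layers then gives all of $M$.

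\emph{Step 3: $K\subseteq M$.} This follows from Step 1, since $M$ is a normal subgroup containing $H$.

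The main obstacle I expect is Step 1 together with the bookkeeping in Step 2 of the $G_3$-error terms, for instance $[g,h^2]$ versus $[g,h]^2$. I would handle these by working modulo $G_3$ first and treating $G_3$ as a central correction at the end, using divisibility in the connected group $G_3$.
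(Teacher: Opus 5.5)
Your argument is correct in substance, but delete the opening paragraph. The identification $M=\{(g_0,g_1,g_2)\in N: g_0^{-1}g_1\in G_2\}$ is not just hard to prove; it is false. For $k\in G_2\setminus G_3$, the element $(1,1,k)$ lies in $N$ and has $g_0^{-1}g_1=1$, yet it is not in $M$.

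Passing from $N$ to $M$ shrinks \emph{both} parameters, $G\to G_2$ and $G_2\to G_3$. The correct description is $M=\{(g_0,g_1,g_2): g_0^{-1}g_1\in G_2,\ g_0g_1^{-2}g_2\in G_3\}$. Nothing later uses that paragraph, so the proof survives.

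In Step 2, the remark that $(1,1,h_2)$ ``lies in $N$, though we need it in $K$'' is a misconception, and it contradicts your own item (2). Since $K\trianglelefteq N$ and $(g,g,g)\in K$, you already have $[(g,g,g),(1,1,w)]=(1,1,[g,w])\in K$ for every $w\in G_2$. These elements generate $\{(1,1,h_3):h_3\in G_3\}$ directly. Your detour is still valid: $[h^2,c^2]=[h,c]^4$ exactly, by bilinearity of $G\times G_2\to G_3$, and $G_3\cong\R^{20}$ is divisible. But it needlessly relies on divisibility of $G_3$, and would not transfer verbatim to a discrete setting such as $\Gamma$.

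Compared with the paper, your lower bound is essentially the same argument. It uses the same commutators $[(g,g,g),(1,h,h^2)]$ handled modulo $G_3$. The paper gets the top layer directly from $[(h,h,h),(w,1,1)]$ with $w\in G_2$.

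The real difference is the upper bound. You verify, by a coordinate computation that you only sketch, that $M$ is closed under products and under $N$-conjugation. The paper instead computes $[H,N]=C:=\{(c_0,c_1,c_2)\in G_2^3: c_0c_1^{-2}c_2\in G_3\}$ and uses $\langle\langle H\rangle\rangle^N=H[H,N]$, where normality is formal via $nhn^{-1}=[n,h]h$.

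You can replace your Step 1 with this route in a few lines:
\begin{itemize}
\item First, $M=H\cdot C$.
\item $C$ is the preimage of $G_3$ under the homomorphism $G_2^3\to G_2$, $(c_0,c_1,c_2)\mapsto c_0c_1^{-2}c_2$ (a homomorphism because $G_2$ is abelian). It is normal in $G^3$ because conjugation acts trivially on $G_2/G_3$.
\item $[H,N]\subseteq C$, because $y\mapsto[x,y]G_3$ is a homomorphism. Hence $[x,g_0][x,g_1]^{-2}[x,g_2]\equiv[x,g_0g_1^{-2}g_2]\in G_3$ whenever $(g_0,g_1,g_2)\in N$.
\end{itemize}
Then $HC$ is a subgroup and is normal in $N$, with no coordinate bookkeeping. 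Your direct route reaches the same conclusion, but only after a conjugation computation that you outline rather than carry out.
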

\begin{proof}
We begin the proof with the claim that:
$$[H,N] = \{(g_0,g_1,g_2) \in G_2^3 : g_0g_1^{-2}g_2\in G_3\}.$$
Indeed, for $g=(g_0,g_1,g_2)\in N$ and $h=(h_0,h_1,h_2)\in H$ (note that $h_0=h_1=h_2$) we have $[h,g] = ([h_0,g_0],[h_1,g_1],[h_2,g_2])$ is in $G_2^3$ and we have
$$[h_0,g_0][h_1,g_1]^{-2}[h_2,g_2] = [h_0,g_0g_1^{-2}g_2]\pmod {G_3}$$ since $g_0g_1^{-2}g_2\in G_2$, we deduce that $[h_0,g_0][h_1,g_1]^{-2}[h_2,g_2]\in G_3$, proving that $[H,N]\subseteq \{(g_0,g_1,g_2) \in G_2^3 : g_0g_1^{-2}g_2\in G_3\}.$ Next we prove the other inclusion. Observe first that since $G_2^3\subseteq N$, and since $[(h,h,h),(w,\mathrm{Id}_G,\mathrm{Id}_G)]=([h,w],\mathrm{Id}_G,\mathrm{Id}_G)$ whenever $h\in G$ and $w\in G_2$, and the same is true in the second and third coordinate, we conclude that $G_3^3\subseteq [H,N]$. Therefore, it suffices to prove the inclusion modulo $G_3^3$. In that case we have that the commutators $[(h,h,h),(\mathrm{Id}_G,n,n^2)] = (\mathrm{Id}_G,[h,n],[h,n]^2)\pmod{G_3^3}$ and $[(h,h,h),(n,n,n)] = ([h,n],[h,n],[h,n])$ together generate the entire group.

Now, since both $G_2$ and $G_3$ are normal, it then follows that $[H,N]$ is normal. In particular, $H\cdot[H,N]$ is a group. We prove that $$\langle\langle H\rangle\rangle^N = H\cdot [H,N].$$ The inclusion $H\cdot [H,N]\subseteq \langle\langle H\rangle\rangle$ is immediate. Conversely, since $H\cdot [H,N]$ contains $H$ it suffices to show that $H\cdot [H,N]$ is normal. Indeed,
$$g h[h',n]g^{-1} = ghg^{-1} (g[h',n]g^{-1}) = h\cdot ([h^{-1},g])\cdot(g[h',n]g^{-1}) \in H\cdot[H,N]$$ whenever $g\in N$, $h,h'\in H$ and $n\in N$. The rest of the proof is now a direct computation.
\end{proof}
\subsection{Vanishing $Z$-integral}
Let $f_0,f_1,f_2:G/\Gamma\rightarrow \mathbb{C}$ be as in the counterexample we constructed in the previous sections, and let $F=f_0\otimes f_1\otimes f_2:X^3\rightarrow \mathbb{C}$ denote the tensor product (i.e., $F(x_0,x_1,x_2) = f_0(x_0)\cdot f_1(x_1)\cdot f_2(x_2)$). Note that $F$ is not continuous (it is merely piecewise continuous), but as we will see shortly, this will not be a problem since it can be arbitrarily approximated by continuous functions in $L^1$, and all $f_0,f_1,f_2$ are bounded. 
\begin{proposition}
    For every $n\in \Z$, we have
    $$\int_{g(n)Z} F~d\mu_{g(n)Z}=0.$$
\end{proposition}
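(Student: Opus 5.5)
The plan is to exhibit, inside the normal closure $\langle\langle H\rangle\rangle^N$ computed in the previous proposition, a one-parameter subgroup of top-layer translations under which $F$ transforms by a nontrivial character. The Haar measure $\mu_{g(n)Z}$ is invariant under translation by elements of $g(n)\langle\langle H\rangle\rangle^N g(n)^{-1}$. Since $G_3^3$ is central in $G^3$, the conjugation by $g(n)$ fixes every element of $G_3^3$, so it suffices to work in $\langle\langle H\rangle\rangle^N\cap G_3^3$. By the description $\{(h_1,h_1h_2,h_1h_2^2h_3): h_i\in G_i\}$, taking $h_1=h_2=\mathrm{Id}_G$ and $h_3\in G_3$ shows that $\{\mathrm{Id}_G\}\times\{\mathrm{Id}_G\}\times G_3\subseteq \langle\langle H\rangle\rangle^N$. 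In particular, for every $v\in V_3$ the element $c_v:=(\mathrm{Id}_G,\mathrm{Id}_G,\exp(v))$ is central in $N$ and lies in the group whose Haar measure is $\mu_{g(n)Z}$ (translated by $g(n)$). Consequently $\mu_{g(n)Z}$ is invariant under the map $y\mapsto c_v y$ on $g(n)Z$.

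The next step is to compute the effect of $c_v$ on $F$. Since $\exp(v)$ is central in $G$, translation by $\exp(v)$ acts on the second-kind coordinates by $x\mapsto \mu(\tau(v)\text{-coordinates},x)$, which by \eqref{mu1}--\eqref{mu3} (with property (c) of $Q$, as the translating element has zero layer-1 part) simply adds $v$ to $x^{(3)}$ and leaves $x^{(1)},x^{(2)}$ unchanged. After reduction by $\tred$, this is addition of $v$ modulo $\Z^{20}$ in the top layer. Hence $f_2(\exp(v)x)=e(\langle\eta,v\rangle)f_2(x)$, because $f_2$ is the character $(0,0,\eta)$. Therefore
$$F(c_v y)=e(\langle \eta,v\rangle)F(y).$$

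Integrating and using the invariance gives
$$\int F\,d\mu_{g(n)Z}=e(\langle\eta,v\rangle)\int F\,d\mu_{g(n)Z}.$$
Since $\eta\neq0$, we may choose $v\in V_3$ with $\langle\eta,v\rangle\notin\Z$, for example $v=\tfrac12 e_{4,12}$, which gives the factor $e(1)$ replaced by $e(\cdot)$ of the value $1$; to make the factor nontrivial one instead takes $v=\tfrac14 e_{4,12}$, so that $\langle\eta,v\rangle=\tfrac12$. The integral then equals its own negative, hence vanishes.

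The main obstacle is justifying the invariance of $\mu_{g(n)Z}$ under $c_v$ when $F$ is only piecewise continuous. We will treat $g(n)Z$ as the closed orbit $g(n)\langle\langle H\rangle\rangle^N\Lambda/\Lambda$ carrying its unique invariant probability measure; the invariance under $c_v$ is then immediate from centrality. The discontinuity of $F$ is handled because the identity $F(c_v y)=e(\cdot)F(y)$ holds pointwise everywhere, so no approximation is needed.
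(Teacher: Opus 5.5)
Your proof is correct and takes essentially the paper's approach. Both arguments rest on two facts: $\{\mathrm{Id}_G\}\times\{\mathrm{Id}_G\}\times G_3\subseteq G_3^3\subseteq\langle\langle H\rangle\rangle^N$, and $F$ transforms by the nontrivial character $v\mapsto e(\langle\eta,v\rangle)$ along this central top-layer subgroup. The paper phrases this as integrating out the last-layer coordinates; you phrase it as invariance of $\mu_{g(n)Z}$ under one central translation $c_v$ with $\langle\eta,v\rangle=\tfrac12$, which makes the paper's one-line Fubini step precise.

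Two cosmetic fixes. The aside about $v=\tfrac12 e_{4,12}$ should be deleted. The top-layer shift is best justified by first commuting $\exp(v)$ to the right using centrality, since property (c) of $Q$ concerns the right-hand factor.
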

\begin{proof}
    We proved that $G_3^3\subseteq \langle\langle H\rangle\rangle^N$. Thus, the nilmanifold $Z$ contains all of the last layer of $X^3$. Since $\eta\not = 0$ integrating over any of the coordinates on the last layer (freezing all other coordinates) gives the desired result.
\end{proof}
\subsection{Concluding the argument}
In order to refute Leibman's conjecture, we must replace $F$ with a continuous function. 
\begin{lemma}[Same marginals]
    For every $n\in \Z$, and every $i\in \{1,2,3\}$, the push-forward of $\mu_{g(n)Y}$ and $\mu_{g(n)Z}$ to the $i^{\mathrm{th}}$ coordinate coincide.
\end{lemma}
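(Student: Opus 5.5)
We need to show that for each $n$ and each coordinate $i$, the pushforwards of $\mu_{g(n)Y}$ and $\mu_{g(n)Z}$ under the projection $p_i:X^3\to X$ coincide. The plan is to show that both pushforwards equal the Haar measure $\mu_X$. Since $p_i$ is equivariant along the $i$-th coordinate, translating by $g(n)$ merely translates the pushforward by the $i$-th component of $g(n)$, namely $\mathrm{Id}_G$, $\alpha^n$, or $\alpha^{2n}$. Haar measure is invariant under left translation by any element of $G$, so it suffices to treat $n=0$, i.e.\ to compare $(p_i)_*\mu_Y$ with $(p_i)_*\mu_Z$.

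\emph{The measure $\mu_Y$.} Here $Y=\{(x,x,x)\}$ is the image of $H$, and $\mu_Y$ is the $H$-invariant probability measure on it. Its image under $p_i$ is the pushforward of $\mu_X$ along $x\mapsto x$, i.e.\ $\mu_X$ itself. Equivalently, $(p_i)_*\mu_Y$ is a $G$-invariant probability measure on $X$, since $H$ acts on the $i$-th coordinate through all of $G$. By uniqueness of Haar measure on $G/\Gamma$, it is $\mu_X$.

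\emph{The measure $\mu_Z$.} Here $Z=\pi(\langle\langle H\rangle\rangle^N)$, and $\mu_Z$ is the unique $\langle\langle H\rangle\rangle^N$-invariant probability measure on this (closed) subnilmanifold. Closedness needs a word: $\langle\langle H\rangle\rangle^N$ is rational since it is explicitly described by the Proposition above, and $Z$ is compact. The key observation is that $p_i$ intertwines the action of $\langle\langle H\rangle\rangle^N$ on $Z$ with its action on $X$ through the $i$-th coordinate homomorphism $\pi_i:\langle\langle H\rangle\rangle^N\to G$. By the Proposition above, the image of $\pi_i$ contains $H$'s $i$-th coordinates $\{h_1 : h_1\in G\}$ (taking $h_2=h_3=\mathrm{Id}$), so it is all of $G$. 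Hence $(p_i)_*\mu_Z$ is a $G$-invariant Borel probability measure on $X=G/\Gamma$, and therefore equals $\mu_X$ by uniqueness.

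Combining the two computations gives $(p_i)_*\mu_{g(n)Y}=(g(n))_i\cdot\mu_X=\mu_X=(g(n))_i\cdot\mu_X=(p_i)_*\mu_{g(n)Z}$. The only step needing care is the bookkeeping that $\mu_{g(n)Y}$ is the translate $g(n)_*\mu_Y$ (and likewise for $Z$), together with the equivariance $p_i\circ g(n)=(g(n))_i\circ p_i$. Both follow from the definition of the Haar measure on a translated subnilmanifold. I expect no real obstacle; the mildest point is justifying that $\mu_Z$ is well defined, which rests on the rationality of the normal closure.
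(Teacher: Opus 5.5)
Your proposal is correct and follows essentially the same route as the paper. Both arguments identify each marginal with $\mu_X$, using that the coordinate projection $\pi_i$ maps the normal closure onto $G$ because it contains the diagonal $H$, and then dispose of the translation by $g(n)$ via the translation-invariance of $\mu_X$. Your version is somewhat more explicit than the paper's, since you spell out the equivariance $p_i\circ g(n)=(g(n))_i\circ p_i$ and appeal directly to uniqueness of the $G$-invariant probability measure on $G/\Gamma$.
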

\begin{proof}
The marginals of $\mu_{g(n)Y}$ are clearly $\mu_X$, since multiplication by $\alpha$ is measure-preserving. For $\mu_{g(n)Z}$, recall that the Homogeneous group associated with $Z$ is the normal closure computed above $$\langle\langle H\rangle\rangle^N = \{(h_1,h_1h_2,h_1h_2^2h_3) : h_1\in G, h_2\in G_2, h_3\in G_3\}.$$ Let $\pi_i : G^3 \to G$ denote the projection onto the $i^{\mathrm{th}}$ coordinate. Since $H\subseteq \langle\langle H\rangle\rangle^N$, it is immediate that $\pi_i(\langle\langle H\rangle\rangle^N) = G$ for all $i \in \{1,2,3\}$. Consequently, the natural projection of the subnilmanifold $Z$ onto any of the three coordinates is the entire space $X=G/\Gamma$. Since $\mu_Z$ is the unique normalized Haar measure on the homogeneous space $Z$, its push-forward under this coordinate projection must be the unique normalized Haar measure on the target space, which is exactly $\mu_X$. Finally, the measure $\mu_{g(n)Z}$ is simply the push-forward of $\mu_Z$ under left-translation by $g(n) = (\mathrm{Id}_G, \alpha^n, \alpha^{2n})$. Since $\mu_X$ is preserved under translations by $\alpha$, we conclude the proof.
\end{proof}

Now, choose $\varepsilon>0$ sufficiently small. By Lusin's theorem, we can choose $f'_0,f'_1,f'_2\in C(X)$ so that $\|f_i-f'_i\|_{L^1(\mu_X)}<\varepsilon$. Let $F'=f'_0\otimes f'_1\otimes f'_2$. Since $\|\prod_{i=1}^3 g_i \|_{L^1}\leq \|g_j\|_{L^1}\cdot \prod_{i\not=j}\|g_i\|_\infty$ for all $g_1,g_2,g_3\in L^1(\mu_X)$ we deduce that 
$$\left|\int_{g(n)Y} (F-F')\,d\mu_{g(n)Y}\right|\leq 3\varepsilon,\qquad  \left|\int_{g(n)Z} (F-F')\,d\mu_{g(n)Z}\right|\leq 3\varepsilon.$$

Letting $n_k$ denote the subsequence from Theorem~\ref{thm:transalign} and $L$ the limit of the correlation sequence associated with $f_0,f_1,f_2$ along this subsequence in absolute value i.e., $L=\lim_{k\rightarrow\infty}|a(n_k)|$. Note that $a(n)=\left|\int_{g(n)Y} F\,d\mu_{g(n)Y}\right|$. We have:

$$\left|\int_{g(n_k)Y} F'\,d\mu_{g(n_k)Y} - \int_{g(n_k)Z} F'\,d\mu_{g(n_k)Z}\right|\geq \left|\int_{g(n_k)Y} F\,d\mu_{g(n_k)Y} - \int_{g(n_k)Z} F\,d\mu_{g(n_k)Z}\right|-6\varepsilon \rightarrow L-6\varepsilon. $$
Thus, taking $0<\varepsilon<\frac{L}{6}$ gives a contradiction. \qed
\section{Symbolic computations}\label{app:cert}
\subsection{The embedding to the tensor algebra}
Let $V=\bigoplus_{i=1}^{r}\R e_i$ and let
$T(V)=\bigoplus_{n\ge 0}V^{\otimes n}$ be its tensor algebra, with product the
concatenation $a\otimes b$ and unit $1\in V^{\otimes 0}$. We write $T(V)_{\mathrm{Lie}}$
for the vector space $T(V)$ regarded as a Lie algebra under the commutator
$[a,b]:=ab-ba$, and we identify $V$ with $V^{\otimes 1}\subseteq T(V)$.

Let $\mathrm{FL}(V)\subseteq T(V)_{\mathrm{Lie}}$ be the Lie subalgebra generated by $V$ (i.e., the smallest Lie algebra containing $V$ and closed under the bracket $[\cdot,\cdot]$).
By the Poincar\'e--Birkhoff--Witt theorem and Friedrichs' criterion
\cite{Reutenauer}, $\mathrm{FL}(V)$ is the \emph{free} Lie algebra on $V$, the inclusion
$\mathrm{FL}(V)\hookrightarrow T(V)$ is injective, and it is homogeneous for the
word-length grading:
\[
\mathrm{FL}(V)=\bigoplus_{n\ge 1}\mathrm{FL}_n,\qquad
\mathrm{FL}_n:=\mathrm{FL}(V)\cap V^{\otimes n},
\]
with $\dim\mathrm{FL}_n=\tfrac1n\sum_{d\mid n}\mu(d)r^{n/d}$ (Witt's formula). Let $\gamma_k(\mathrm{FL}(V))$ denote the $k^{\mathrm{th}}$ term in the lower central series.

\begin{lemma}\label{lem:grading-lcs}
The word-length grading of $\mathrm{FL}(V)$ coincides with its lower central series:
$\gamma_k\bigl(\mathrm{FL}(V)\bigr)=\bigoplus_{n\ge k}\mathrm{FL}_n$ for all $k\ge 1$.
\end{lemma}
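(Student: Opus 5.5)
We prove the two inclusions separately, by induction on $k$, using only two facts: $\mathrm{FL}(V)$ is generated as a Lie algebra by $V=\mathrm{FL}_1$, and the bracket respects word length, i.e. $[\mathrm{FL}_a,\mathrm{FL}_b]\subseteq\mathrm{FL}_{a+b}$. The latter holds because $ab-ba\in V^{\otimes(a+b)}$ for $a\in V^{\otimes a}$, $b\in V^{\otimes b}$, and $\mathrm{FL}(V)$ is a Lie subalgebra. Recall $\gamma_1=\mathrm{FL}(V)$ and $\gamma_{k+1}=[\mathrm{FL}(V),\gamma_k]$.

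\emph{Step 1: spanning by brackets.} First we show that $\mathrm{FL}_n$ is spanned by left-normed brackets $[v_1,[v_2,[\dots,[v_{n-1},v_n]\dots]]]$ with $v_i\in V$. Let $L'$ be the linear span of all such brackets over all $n\ge1$. Then $L'$ contains $V$, and $[V,L']\subseteq L'$ trivially. Closure of $L'$ under the full bracket follows by induction on the length of the left entry, using the Jacobi identity $[[v,x],y]=[v,[x,y]]-[x,[v,y]]$. Hence $L'$ is a Lie subalgebra containing $V$, so $L'=\mathrm{FL}(V)$. Each such bracket of length $n$ lies in $V^{\otimes n}$. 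Since the grading of $T(V)$ is a direct sum, taking the $V^{\otimes n}$-component shows that $\mathrm{FL}_n$ is spanned by the length-$n$ brackets. In particular $\mathrm{FL}(V)=\bigoplus_n\mathrm{FL}_n$, as stated before the lemma.

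\emph{Step 2: the inclusion $\gamma_k\subseteq\bigoplus_{n\ge k}\mathrm{FL}_n$.} Argue by induction on $k$; the case $k=1$ is trivial. If $\gamma_k\subseteq\bigoplus_{n\ge k}\mathrm{FL}_n$, then by bilinearity and the grading,
\[
\gamma_{k+1}=[\mathrm{FL}(V),\gamma_k]\subseteq\sum_{a\ge1,\,b\ge k}[\mathrm{FL}_a,\mathrm{FL}_b]\subseteq\bigoplus_{n\ge k+1}\mathrm{FL}_n .
\]

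\emph{Step 3: the reverse inclusion.} By Step 1 it suffices to show that every left-normed bracket of length $n\ge k$ lies in $\gamma_k$. This holds because $[v_1,[v_2,\dots]]$ with $n$ entries lies in $\gamma_n$, by induction on $n$ using the definition $\gamma_{j+1}=[\mathrm{FL}(V),\gamma_j]$, and $\gamma_n\subseteq\gamma_k$ for $n\ge k$. Summing over $n\ge k$ gives $\bigoplus_{n\ge k}\mathrm{FL}_n\subseteq\gamma_k$. Since all spaces here are subspaces of $T(V)$, possibly infinite direct sums, there is no convergence issue; we work with finite sums throughout.

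The only mildly delicate point is Step 1: showing that the left-normed span is closed under brackets and that it decomposes compatibly with $\bigoplus V^{\otimes n}$. Everything else is formal. For the first part I would use the Jacobi identity as above; for the second, homogeneity of each bracket.
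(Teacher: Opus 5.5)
Your proposal is correct and follows the paper's argument. Homogeneity of the bracket gives $\gamma_k\subseteq\bigoplus_{n\ge k}\mathrm{FL}_n$ by induction, and the reverse inclusion follows because $\mathrm{FL}_n$ is spanned by nested brackets $[v_1,[v_2,\dots]]$, each of which lies in $\gamma_n$. The only difference is that you prove this spanning fact yourself, via the Jacobi identity and projection onto $V^{\otimes n}$, whereas the paper cites Reutenauer for $\mathrm{FL}_n=[\mathrm{FL}_1,\mathrm{FL}_{n-1}]$.
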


\begin{proof}
Brackets are homogeneous, $[\mathrm{FL}_p,\mathrm{FL}_q]\subseteq\mathrm{FL}_{p+q}$, so
$\gamma_{k+1}(\mathrm{FL}(V))=[\mathrm{FL}(V),\gamma_k(\mathrm{FL}(V))]\subseteq\bigoplus_{n\ge k+1}\mathrm{FL}_n$ by
induction. For the reverse inclusion it suffices to show $\mathrm{FL}_n\subseteq\gamma_n(\mathrm{FL}(V))$,
since the $\gamma_k(\mathrm{FL}(V))$ are nested. As $\mathrm{FL}(V)$ is generated in degree~$1$, every
degree-$n$ element is a sum of left-normed brackets, hence
$\mathrm{FL}_n=[\mathrm{FL}_1,\mathrm{FL}_{n-1}]$ \cite{Reutenauer}. By induction
$\mathrm{FL}_{n-1}\subseteq\gamma_{n-1}(\mathrm{FL}(V))$, so
$\mathrm{FL}_n=[\mathrm{FL}_1,\mathrm{FL}_{n-1}]\subseteq[\mathrm{FL}(V),\gamma_{n-1}(\mathrm{FL}(V))]
=\gamma_n(\mathrm{FL}(V))$.
\end{proof}

The \emph{free $c$-step nilpotent Lie algebra} on $V$ is
$\f=\f_{r,c}:=\mathrm{FL}(V)/\gamma_{c+1}(\mathrm{FL}(V))$. On the associative side, set
\[
I:=\bigoplus_{n> c}V^{\otimes n}\ \trianglelefteq\ T(V),\qquad
T_{\le c}(V):=T(V)/I,
\]
a two-sided ideal and the associated quotient algebra. Let $\pi\colon T(V)\to T_{\le c}(V)$ be the projection
and $T_{\le c}(V)_{\mathrm{Lie}}$ the quotient with its commutator bracket.

\begin{definition}[The embedding]\label{def:embedding}
Let $\Phi\colon\f\to T_{\le c}(V)_{\mathrm{Lie}}$ be the map induced by
$\pi|_{\mathrm{FL}(V)}\colon \mathrm{FL}(V)\to T_{\le c}(V)_{\mathrm{Lie}}$, i.e., the
unique Lie homomorphism with $\Phi(e_i)=e_i$ that sends every iterated bracket to the
corresponding iterated commutator. Explicitly,
\[
\Phi(e_i)=e_i,\qquad
\Phi([e_i,e_j])=e_ie_j-e_je_i,\qquad
\Phi([e_k,[e_i,e_j]])=\bigl[e_k,\,e_ie_j-e_je_i\bigr],
\]
all computed in $T_{\le c}(V)$.
\end{definition}

\begin{proposition}
$\Phi$ is a well-defined injective homomorphism of Lie algebras, with image the
degree-$\le c$ Lie part $\bigoplus_{n=1}^{c}\pi(\mathrm{FL}_n)$ of $T_{\le c}(V)$. Thus
$\f$ is realized as the Lie subalgebra of $T_{\le c}(V)_{\mathrm{Lie}}$ generated by
$e_1,\dots,e_r$.
\end{proposition}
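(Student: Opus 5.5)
The plan is to derive everything from the restriction $\pi|_{\mathrm{FL}(V)}$, using Lemma~\ref{lem:grading-lcs} to identify its kernel.

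\emph{Step 1: $\pi|_{\mathrm{FL}(V)}$ is a Lie homomorphism.} The ideal $I$ is two-sided, so the commutator bracket on $T(V)$ descends to $T_{\le c}(V)$ and $\pi$ is a homomorphism of Lie algebras $T(V)_{\mathrm{Lie}}\to T_{\le c}(V)_{\mathrm{Lie}}$. Restricting $\pi$ to the Lie subalgebra $\mathrm{FL}(V)$ therefore gives a Lie homomorphism.

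\emph{Step 2: compute the kernel.} Since $\mathrm{FL}(V)=\bigoplus_n \mathrm{FL}_n$ with $\mathrm{FL}_n\subseteq V^{\otimes n}$, and $\pi$ kills exactly the components of degree $>c$ while being injective on $\bigoplus_{n\le c}V^{\otimes n}$, we get
\[
\ker\bigl(\pi|_{\mathrm{FL}(V)}\bigr)=\mathrm{FL}(V)\cap I=\bigoplus_{n>c}\mathrm{FL}_n .
\]
By Lemma~\ref{lem:grading-lcs} this kernel equals $\gamma_{c+1}(\mathrm{FL}(V))$. The first isomorphism theorem then shows that $\pi|_{\mathrm{FL}(V)}$ factors through $\f=\mathrm{FL}(V)/\gamma_{c+1}(\mathrm{FL}(V))$. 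The induced map $\Phi$ is well defined and injective, and it sends $e_i\mapsto e_i$.

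\emph{Step 3: uniqueness and the explicit formulas.} A Lie homomorphism is determined by its values on generators, and $\f$ is generated by $e_1,\dots,e_r$. So $\Phi$ is the unique homomorphism with $\Phi(e_i)=e_i$, and it sends iterated brackets to iterated commutators, which gives the displayed formulas.

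\emph{Step 4: the image.} The image is $\pi(\mathrm{FL}(V))=\bigoplus_{n=1}^c\pi(\mathrm{FL}_n)$, since the higher components vanish. It is the Lie subalgebra generated by $\pi(e_1),\dots,\pi(e_r)$: the image of a generated subalgebra under a homomorphism is the subalgebra generated by the images.

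The only nontrivial input is the kernel identification in Step 2. This rests on two facts. The first is the homogeneity of $\mathrm{FL}(V)$ inside $T(V)$, which the paper takes from \cite{Reutenauer}. The second is Lemma~\ref{lem:grading-lcs}. Everything else is formal, so I do not expect any real obstacle.
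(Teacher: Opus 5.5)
Your proposal is correct and follows essentially the same route as the paper. Both restrict the algebra homomorphism $\pi$ to $\mathrm{FL}(V)$, identify the kernel $\mathrm{FL}(V)\cap I=\bigoplus_{n>c}\mathrm{FL}_n=\gamma_{c+1}(\mathrm{FL}(V))$ via homogeneity and Lemma~\ref{lem:grading-lcs}, and apply the first isomorphism theorem; your Steps 3 and 4 just spell out the uniqueness and the ``generated by $e_1,\dots,e_r$'' claims that the paper leaves implicit.
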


\begin{proof}
The projection $\pi$ is an algebra homomorphism, so its restriction
$\pi|_{\mathrm{FL}(V)}\colon\mathrm{FL}(V)\to T_{\le c}(V)_{\mathrm{Lie}}$ is a Lie
homomorphism. Its kernel is
$\mathrm{FL}(V)\cap I=\bigoplus_{n>c}\mathrm{FL}_n=\gamma_{c+1}(\mathrm{FL}(V))$ by
Lemma~\ref{lem:grading-lcs}. The first isomorphism theorem yields a well-defined
injective Lie homomorphism
$\Phi\colon\mathrm{FL}(V)/\gamma_{c+1}(\mathrm{FL}(V))=\f\hookrightarrow T_{\le c}(V)_{\mathrm{Lie}}$,
with image $\pi(\mathrm{FL}(V))=\bigoplus_{n=1}^{c}\pi(\mathrm{FL}_n)$.
\end{proof}
\subsection{The code in simple words}
Here we include the code used in the paper. Specifically the computation of  $\Lambda^{(3)}$, the law $s^{(3)}(\alpha^n)$, the exact coefficients for $P,Q$ from the product law (in second-kind coordinates), and finally the computation of $P(m)$.
The code is written in Python and is explained here in simple words. The first lines of the code are merely a technicality:
\begin{align*}&\textbf{import} \textit{ sympy} \textbf{ as} \text{ sp}\\
&\textbf{from} \textit{ itertools} \textbf{ import} \textit{ product}\\
&\textbf{from} \textit{ collections} \textbf{ import } \textit{defaultdict}\\
\text{ }\\
&\textit{Letters} = (1,2,3,4).
\end{align*}
The first three lines import the tools needed for the computation. The last line defines our letters (namely, the four generators $e_1,e_2,e_3,e_4$ used in the paper).
Our first function $t_{\mathrm{clean}}$ is used to simplify terms, in particular by removing any variable multiplied by a zero coefficient. It works like this:
\begin{align*}
    &\textbf{def } t_{\text{clean}}(d):\\ &\quad \textit{out}=\{\text{ }\}\\
    &\quad \textbf{for } \text{w, c} \text{ in} \textit{ d.items():}\\&\qquad \textit{c = sp.expand(c)}\\&\qquad \textbf{if } \text{c!=0:}\\&\qquad \quad \textit{out[w] = c}\\&\quad \textbf{return} \textit{ out.}
\end{align*}
We defined a new function $t_{\text{clean}}$ and it takes a term $d$ and performs this: First it creates an empty word $\text{out}=\{\}$. Then for a word $w$ and a coefficient $c$ of $w$ in $d$ it simplifies $c$. If $c$ is not zero, then it returns the coefficient $c$, but if it is zero, then it returns the empty word that we have set earlier.\\
Addition is commutative and is computed by the code:
\begin{align*}
    &\textbf{def } t_{\text{add}}(a, b):\\
    &\quad \textit{r = dict(a)}\\
    &\quad \textbf{for } \text{w, c} \textbf{ in } \textit{b.items():}\\
    &\qquad \textit{r[w] = r.get(w, 0) + c}\\
    &\quad \textbf{return } t_{\text{clean}}(r).
\end{align*}
The function $t_{\text{add}}$ takes two terms $a$ and $b$. It first copies $a$ into a working term $r$. Then it runs over every word $w$ of $b$, with coefficient $c$, and adds $c$ to whatever coefficient $w$ already had in $r$ (if $w$ did not appear in $a$, its old coefficient is taken to be $0$). Finally it cleans the result.

\medskip
Scaling by a number and subtraction are equally simple:
\begin{align*}
    &\textbf{def } t_{\text{scale}}(a, s):\\
    &\quad \textbf{return } t_{\text{clean}}\big(\{\, \text{w}:\ s\cdot c \ \textbf{for } \text{w, c} \textbf{ in } \textit{a.items()}\,\}\big).\\[4pt]
    &\textbf{def } t_{\text{sub}}(a, b):\\
    &\quad \textbf{return } t_{\text{add}}\big(a,\ t_{\text{scale}}(b, -1)\big).
\end{align*}
Here $t_{\text{scale}}$ multiplies the coefficient of every word of $a$ by the number $s$, and $t_{\text{sub}}$ subtracts $b$ from $a$ by adding $(-1)\cdot b$.

\medskip
The next function is the heart of the model: multiplication.
\begin{align*}
    &\textbf{def } t_{\text{mul}}(a, b):\\
    &\quad \textit{r = defaultdict(lambda: 0)}\\
    &\quad \textbf{for } \text{wa, ca} \textbf{ in } \textit{a.items():}\\
    &\qquad \textbf{for } \text{wb, cb} \textbf{ in } \textit{b.items():}\\
    &\qquad\quad \textit{w = wa + wb}\\
    &\qquad\quad \textbf{if } \textit{len}(w) \leq 3:\\
    &\qquad\qquad \textit{r[w] += ca}\cdot \textit{cb}\\
    &\quad \textbf{return } t_{\text{clean}}\big(\textit{dict}(r)\big).
\end{align*}
We multiply two terms $a$ and $b$ in the tensor algebra (not in $\f$). For every word $w_a$ ($\text{wa}$) of $a$ with coefficient $c_a$ ($\text{ca}$), and every word $w_b$ of $b$ with coefficient $c_b$, their product is the \emph{concatenation} $w_a+w_b$ of the two words, and its coefficient is the product $c_a\cdot c_b $. These contributions are accumulated in $r$. Fortunately, these computations are simplified by the line ``$\textbf{if } \textit{len}(w)\leq 3$'' which drops any word of length $\geq 4$. 

\medskip
The Lie bracket is now just the commutator $[a,b]=ab-ba$:
\begin{align*}
    &\textbf{def } t_{\text{bracket}}(a, b):\\
    &\quad \textbf{return } t_{\text{sub}}\big(t_{\text{mul}}(a, b),\ t_{\text{mul}}(b, a)\big).
\end{align*}

\medskip
We record the unit $1$ of the algebra (the empty word, with coefficient $1$),
$$\textit{ONE} = \{\, ():\ \textit{sp.Integer}(1)\,\},$$
and then the group operations $\exp,\log$ and the inverse. Since the algebra is $3$-step nilpotent, the exponential and logarithm series \emph{terminate} and are therefore finite and exact:
\begin{align*}
    &\textbf{def } t_{\text{exp}}(z):\\
    &\quad z_2 = t_{\text{mul}}(z, z)\\
    &\quad z_3 = t_{\text{mul}}(z_2, z)\\
    &\quad \textbf{return } t_{\text{add}}\big(t_{\text{add}}(\textit{ONE}, z),\ t_{\text{add}}(t_{\text{scale}}(z_2, \tfrac12),\ t_{\text{scale}}(z_3, \tfrac16))\big).\\[4pt]
    &\textbf{def } t_{\text{log}}(g):\\
    &\quad w = t_{\text{sub}}(g, \textit{ONE})\\
    &\quad w_2 = t_{\text{mul}}(w, w)\\
    &\quad w_3 = t_{\text{mul}}(w_2, w)\\
    &\quad \textbf{return } t_{\text{add}}\big(t_{\text{sub}}(w,\ t_{\text{scale}}(w_2, \tfrac12)),\ t_{\text{scale}}(w_3, \tfrac13)\big).\\[4pt]
    &\textbf{def } t_{\text{inv}}(g):\\
    &\quad \textbf{return } t_{\text{exp}}\big(t_{\text{scale}}(t_{\text{log}}(g), -1)\big).
\end{align*}
Here $t_{\text{exp}}$ computes $\exp(z)=1+z+\tfrac12 z^2+\tfrac16 z^3$ for an element $z$ without constant term, and $t_{\text{log}}$ computes $\log(g)=w-\tfrac12 w^2+\tfrac13 w^3$ where $w=g-1$. (The fractions $\tfrac12,\tfrac16,\tfrac13$ are kept as exact rationals.) The inverse of a group element is $t_{\text{inv}}(g)=\exp(-\log g)$.

\medskip
\noindent\textbf{The generators and the Hall basis.}
We now build the standard basis of $\f$ inside the model. The generators are the single-letter words,
$$E = \{\, i:\ \{(i,): \textit{sp.Integer}(1)\}\ \textbf{for } i \textbf{ in } \textit{Letters}\,\},$$
the second layer consists of the brackets $e_{ij}=[e_i,e_j]$ for $i<j$,
\begin{align*}
&\textit{Pairs} = [\,(i,j)\ \textbf{for } i \textbf{ in } \textit{Letters}\ \textbf{for } j \textbf{ in } \textit{Letters}\ \textbf{if } i<j\,],\\
&e_2 = \{\,(i,j):\ t_{\text{bracket}}(E[i], E[j])\ \textbf{for } (i,j) \textbf{ in } \textit{Pairs}\,\},
\end{align*}
and the third layer is the Hall basis $e_{k,ij}=[e_k,e_{ij}]$ with $i<j$ and $k\geq i$,
\begin{align*}
&\textit{Hall}_3 = [\,(k,i,j)\ \textbf{for } (i,j) \textbf{ in } \textit{Pairs}\ \textbf{for } k \textbf{ in } \textit{Letters}\ \textbf{if } k\geq i\,],\\
&e_3 = \{\,(k,i,j):\ t_{\text{bracket}}(E[k], e_2[(i,j)])\ \textbf{for } (k,i,j) \textbf{ in } \textit{Hall}_3\,\}.
\end{align*}
Together these are $4+6+20=30$ elements. The code assembles them into one ordered list $\textit{Basis}$ and forms an $84\times 30$ matrix $M$ whose columns are these $30$ vectors written out in the basis of all words of length $1,2,3$ (there are $4+16+64=84$ such words). One then checks
$$\operatorname{rank}(M)=30.$$
This single line certifies two things at once: the $30$ Hall elements are linearly independent, and hence $\dim\f=30$ (Witt's count). Because $M$ has full column rank, every Lie element $v$ has unique coordinates in the Hall basis, given by the exact rational formula $c=(M^{\top}M)^{-1}M^{\top}\widehat v$, where $\widehat v$ is $v$ written in the $84$ words. This is what the function $\textit{coords}$ returns:
\begin{align*}
    &\textbf{def } \textit{coords}(v):\\
    &\quad \widehat v = \text{the }84\text{-vector of coefficients of } v\\
    &\quad \textbf{return } (M^{\top}M)^{-1}\, M^{\top}\, \widehat v \quad\text{(as a dictionary $\{\text{tag}:\text{coefficient}\}$)}.
\end{align*}

\medskip
\noindent\textbf{Computation of $\Lambda^{(3)}$.}
The rotation is $\alpha=\exp(\alpha_1 e_1)\exp(\alpha_2 e_2)\exp(\alpha_3 e_3)\exp(\alpha_4 e_4)$, and $\Lambda=\log\alpha$. With symbolic $\alpha_1,\dots,\alpha_4$ this is exactly:
\begin{align*}
    &\textit{acc} = \textit{ONE}\\
    &\textbf{for } i \textbf{ in } \textit{Letters}:\quad \textit{acc} = t_{\text{mul}}\big(\textit{acc},\ t_{\text{exp}}(t_{\text{scale}}(E[i], \alpha_{i}))\big)\\
    &\Lambda = t_{\text{log}}(\textit{acc})\\
    &cL = \textit{coords}(\Lambda).
\end{align*}
The dictionary $cL$ holds the Hall coordinates of $\Lambda$. Its first layer is $\sum_i\alpha_i e_i$, its second layer is $\tfrac12\sum_{i<j}\alpha_i\alpha_j e_{ij}$, and its third layer is compared, coordinate by coordinate, with the $20$ coefficients of $\Lambda^{(3)}$ stated in the paper; all $20$ agree.

\medskip
\noindent\textbf{The maps $\tau$ and $\mathrm{peel}$.}
To pass to second (Mal'cev) coordinates we need the basis vectors $Y_{ij},Y_{k,ij}$, which the paper \emph{defines} as group commutators $(g,h)=g\ast h\ast g^{-1}\ast h^{-1}$. The commutator and the two families of vectors are
\begin{align*}
    &\textbf{def } \textit{comm}(g, h):\quad \textbf{return } t_{\text{mul}}\big(t_{\text{mul}}(t_{\text{mul}}(g, h), t_{\text{inv}}(g)), t_{\text{inv}}(h)\big),\\[4pt]
    &\textit{expY}_{ij} = \textit{comm}\big(t_{\text{exp}}(E[i]),\ t_{\text{exp}}(E[j])\big),\\
    &Y_{ij} = t_{\text{log}}\big(\textit{expY}_{ij}\big),\\
    &Y_{k,ij} = t_{\text{log}}\big(\textit{comm}(t_{\text{exp}}(E[k]),\ \textit{expY}_{ij})\big).
\end{align*}
Computing them this way also certifies the paper's formulas $Y_{ij}=e_{ij}+\tfrac12(e_{i,ij}+e_{j,ij})$ and $Y_{k,ij}=e_{k,ij}$. The map $\tau$ is the ordered product of exponentials, $$\tau(s)=\prod_{i=1}^{4}\exp(s_i Y_i)\ \prod_{1\leq i<j\leq 4}\exp(s_{ij}Y_{ij})\ \prod_{k\geq i, 1\leq i <j\leq 4}\exp(s_{k,ij}Y_{k,ij}),$$ which in code reads
\begin{align*}
    &\textbf{def } \tau(s):\\
    &\quad g = \textit{ONE}\\
    &\quad \textbf{for } i \textbf{ in } \textit{Letters}:\quad g = t_{\text{mul}}\big(g,\ t_{\text{exp}}(t_{\text{scale}}(E[i],\ s_i))\big)\\
    &\quad \textbf{for } (i,j) \textbf{ in } \textit{Pairs}:\quad g = t_{\text{mul}}\big(g,\ t_{\text{exp}}(t_{\text{scale}}(Y_{ij},\ s_{ij}))\big)\\
    &\quad \textbf{for } (k,i,j) \textbf{ in } \textit{Hall}_3:\quad g = t_{\text{mul}}\big(g,\ t_{\text{exp}}(t_{\text{scale}}(Y_{k,ij},\ s_{k,ij}))\big)\\
    &\quad \textbf{return } g.
\end{align*}
Here $s_i,s_{ij},s_{k,ij}$ denote the first, second and third layer coordinates stored in the dictionary $s$ (a missing coordinate is taken to be $0$). 

Its inverse $\mathrm{peel}=\tau^{-1}$ recovers the second-kind coordinates one layer at a time: read off the first layer from $\log g$, divide out $\tau_1$, read off the second layer, divide out $\tau_2$, then read off the third layer:
\begin{align*}
    &\textbf{def } \mathrm{peel}(g):\\
    &\quad s_i = \textit{coords}\big(t_{\text{log}}(g)\big)_i \qquad (i\in\textit{Letters})\\
    &\quad g_1 = \textstyle\prod_{i} t_{\text{exp}}(t_{\text{scale}}(E[i],\ s_i)) \qquad (=\tau_1(s^{(1)}))\\
    &\quad g' = t_{\text{mul}}\big(t_{\text{inv}}(g_1),\ g\big)\\
    &\quad s_{ij} = \textit{coords}\big(t_{\text{log}}(g')\big)_{ij} \qquad (i<j)\\
    &\quad g_2 = \textstyle\prod_{i<j} t_{\text{exp}}(t_{\text{scale}}(Y_{ij},\ s_{ij})) \qquad (=\tau_2(s^{(2)}))\\
    &\quad g'' = t_{\text{mul}}\big(t_{\text{inv}}(g_2),\ g'\big)\\
    &\quad s_{k,ij} = \textit{coords}\big(t_{\text{log}}(g'')\big)_{k,ij}\\
    &\quad \textbf{return } s.
\end{align*}
At each step only the freshly exposed layer is read off, because $\log g_1$ has only a first layer (so $\log g'$ has no first layer), $Y_{ij}\equiv e_{ij}$ modulo the third layer (so $\log g''$ has no second layer), and $Y_{k,ij}=e_{k,ij}$ (so the third layer is read directly).

\medskip
\noindent\textbf{The law $s^{(3)}(\alpha^n)$.}
Since $\alpha^n=\exp(n\Lambda)$, the second-kind coordinates of the orbit are obtained in one line,
$$s(\alpha^n) = \mathrm{peel}\big(t_{\text{exp}}(t_{\text{scale}}(\Lambda, n))\big),$$
with symbolic $n$. The first two layers reproduce $n\alpha$ and $-\binom{n}{2}\sum_{i<j}\alpha_i\alpha_j e_{ij}$, and the third layer is checked against the cubic $(n-n^3)\Lambda^{(3)}-\tfrac{n-n^2}{4}S_1-\tfrac{n^2-n^3}{4}S_2$ of the paper.

\medskip
\noindent\textbf{The product law and the coefficients of $P,Q$.}
The multiplication in second-kind coordinates is $\mu(x,x')=\tau^{-1}\big(\tau(x)\ast\tau(x')\big)$, computed with two sets of $30$ independent symbolic coordinates,
$$\textit{MU} = \mathrm{peel}\big(t_{\text{mul}}(\tau(x), \tau(x'))\big).$$
From $\textit{MU}$ the code reads off the bilinear correction $P=\mu^{(2)}-x^{(2)}-x'^{(2)}$ and the cubic correction $Q=\mu^{(3)}-x^{(3)}-x'^{(3)}$. 

\medskip
\noindent\textbf{Computation of $P(m)$.}
The chosen top-layer character $\eta$ has four non-zero coordinates, so the pairing $\langle\eta,\cdot\rangle$ acting on a third-layer vector $v$ is implemented directly as
$$\langle\eta, v\rangle = 2\,v_{4,12} - 9\,v_{1,13} + v_{2,14} + 6\,v_{2,23},$$
where $v_{k,ij}$ is the $e_{k,ij}$-coordinate of $v$. With $g=\exp(m\Lambda)\ast\tau(x^{(1)},0,0)$, $s(g)=\mathrm{peel}(g)$, and $\phi=\fl{m\alpha+x^{(1)}}$, the single-point phase is assembled as
\begin{align*}
    &g = t_{\text{mul}}\big(t_{\text{exp}}(t_{\text{scale}}(\Lambda, m)),\ \tau(x^{(1)},0,0)\big)\\
    &s(g) = \mathrm{peel}(g)\\
    &A_0 = \langle\eta,\ s^{(3)}(g)\rangle\\
    &\Pi = \langle\eta,\ Q\big(s^{(1)}(g),\ s^{(2)}(g),\ -\phi\big)\rangle\\
    &P(m) = \textit{sp.expand}\big(A_0 + \Pi\big).
\end{align*}
Here $A_0$ is the floor-free part $\langle\eta,s^{(3)}(g)\rangle$, and $\Pi$ evaluates the cubic $Q$ (built above) at the orbit coordinates $s^{(1)}(g),s^{(2)}(g)$ and at $-\phi$. 

\bibliographystyle{abbrv}
\bibliography{bibliography}

@article {LeibmanOrbit,
    AUTHOR = {Leibman, A.},
     TITLE = {Orbit of the diagonal in the power of a nilmanifold},
   JOURNAL = {Trans. Amer. Math. Soc.},
  FJOURNAL = {Transactions of the American Mathematical Society},
    VOLUME = {362},
      YEAR = {2010},
    NUMBER = {3},
     PAGES = {1619--1658},
      ISSN = {0002-9947,1088-6850},
   MRCLASS = {37A17 (22F30 37A05 37A30)},
  MRNUMBER = {2563743},
MRREVIEWER = {Nikos\ Frantzikinakis},
       DOI = {10.1090/S0002-9947-09-04961-7},
       URL = {https://doi.org/10.1090/S0002-9947-09-04961-7},
}

@book{AuslanderGreenHahn,
  AUTHOR    = {Auslander, Louis and Green, Leon and Hahn, Frank},
  TITLE     = {Flows on homogeneous spaces},
  SERIES    = {Annals of Mathematics Studies, No. 53},
  PUBLISHER = {Princeton University Press, Princeton, N.J.},
  YEAR      = {1963},
  PAGES     = {vii+107},
  MRNUMBER  = {0167569},
}

@book {Schmidtsubspace,
    AUTHOR = {Schmidt, Wolfgang M.},
     TITLE = {Diophantine approximation},
    SERIES = {Lecture Notes in Mathematics},
    VOLUME = {785},
 PUBLISHER = {Springer, Berlin},
      YEAR = {1980},
     PAGES = {x+299},
      ISBN = {3-540-09762-7},
   MRCLASS = {10Fxx (10-02)},
  MRNUMBER = {568710},
MRREVIEWER = {A.\ J.\ van der Poorten},
}

@article {BLgeneralized,
    AUTHOR = {Bergelson, Vitaly and Leibman, Alexander},
     TITLE = {Distribution of values of bounded generalized polynomials},
   JOURNAL = {Acta Math.},
  FJOURNAL = {Acta Mathematica},
    VOLUME = {198},
      YEAR = {2007},
    NUMBER = {2},
     PAGES = {155--230},
      ISSN = {0001-5962,1871-2509},
   MRCLASS = {11K31 (11J54)},
  MRNUMBER = {2318563},
MRREVIEWER = {Alexander\ Gorodnik},
       DOI = {10.1007/s11511-007-0015-y},
       URL = {https://doi.org/10.1007/s11511-007-0015-y},
}

@misc{Leng2025,
  author       = {Leng, James},
  title        = {Structured extensions and multi-correlation sequences},
  year         = {2025},
  eprint       = {2504.07038},
  archivePrefix= {arXiv},
  primaryClass = {math.DS},
  note         = {Preprint},
  url          = {https://arxiv.org/abs/2504.07038}
}

@article{bm,
AUTHOR = {Bergelson, V., Moragues, A.F.},
     TITLE = {An ergodic correspondence principle, invariant means and applications.},
   JOURNAL = {Israel Journal of Mathematics},
    VOLUME = {245},
      YEAR = {2021},
     PAGES = {921--962},
DOI={https://doi.org/10.1007/s11856-021-2233-y},
}

@article {Franpoly,
    AUTHOR = {Frantzikinakis, Nikos},
     TITLE = {Multiple ergodic averages for three polynomials and
              applications},
   JOURNAL = {Trans. Amer. Math. Soc.},
  FJOURNAL = {Transactions of the American Mathematical Society},
    VOLUME = {360},
      YEAR = {2008},
    NUMBER = {10},
     PAGES = {5435--5475},
      ISSN = {0002-9947,1088-6850},
   MRCLASS = {37A05 (11B75 28D05 37A45)},
       DOI = {10.1090/S0002-9947-08-04591-1},
       URL = {https://doi.org/10.1090/S0002-9947-08-04591-1},
}

@misc{ShalomGrothendieck,
AUTHOR = {Or Shalom},
TITLE = {An application of Grothendieck theorem to the theory of multicorrelation sequences, multiple recurrence and partition regularity of quadratic equations},
Journal = {Preprint, available at https://arxiv.org/abs/2302.12857},
Year={2023},

}

@article {Stepin,
    AUTHOR = {Stepin, A. M.},
     TITLE = {Flows on solvable manifolds},
   JOURNAL = {Uspehi Mat. Nauk},
  FJOURNAL = {Akademiya Nauk SSSR i Moskovskoe Matematicheskoe Obshchestvo.
              Uspekhi Matematicheskikh Nauk},
    VOLUME = {24},
      YEAR = {1969},
    NUMBER = {5 (149)},
     PAGES = {241--242},
      ISSN = {0042-1316},
   MRCLASS = {28.70 (22.00)},
  MRNUMBER = {0267075},
MRREVIEWER = {J. Merza},
}

@article {ParrySpectral,
    AUTHOR = {Parry, W.},
     TITLE = {Spectral analysis of {$G$}-extensions of dynamical systems},
   JOURNAL = {Topology},
  FJOURNAL = {Topology. An International Journal of Mathematics},
    VOLUME = {9},
      YEAR = {1970},
     PAGES = {217--224},
      ISSN = {0040-9383},
   MRCLASS = {54.82 (28.00)},
  MRNUMBER = {261581},
MRREVIEWER = {Robert Ellis},
       DOI = {10.1016/0040-9383(70)90011-X},
       URL = {https://doi.org/10.1016/0040-9383(70)90011-X},
}

@article {HKM,
    AUTHOR = {Host, Bernard and Kra, Bryna and Maass, Alejandro},
     TITLE = {Complexity of nilsystems and systems lacking nilfactors},
   JOURNAL = {J. Anal. Math.},
  FJOURNAL = {Journal d'Analyse Math\'{e}matique},
    VOLUME = {124},
      YEAR = {2014},
     PAGES = {261--295},
      ISSN = {0021-7670},
   MRCLASS = {37A30 (37B05)},
  MRNUMBER = {3286054},
MRREVIEWER = {Siming Tu},
       DOI = {10.1007/s11854-014-0032-7},
       URL = {https://doi.org/10.1007/s11854-014-0032-7},
}

@book{Reutenauer,
  Author    = {Reutenauer, Christophe},
  Title     = {Free {L}ie algebras},
  Series    = {London Mathematical Society Monographs. New Series},
  Volume    = {7},
  Publisher = {The Clarendon Press, Oxford University Press, New York},
  Year      = {1993},
  Pages     = {xviii+269},
  ISBN      = {0-19-853679-8},
  MRNUMBER  = {1231799},
}

@article{ARS,
  Author        = {Ackelsberg, Ethan and Richter, Florian K. and Shalom, Or},
  Title         = {On the maximal spectral type of nilsystems},
  Journal       = {Proc. Amer. Math. Soc. Ser. B},
  Fjournal      = {Proceedings of the American Mathematical Society, Series B},
  Volume        = {11},
  Year          = {2024},
  Pages         = {469--480},
}

@article {ABS,
    AUTHOR = {Ackelsberg, Ethan and Bergelson, Vitaly and Shalom, Or},
     TITLE = {Khintchine-type recurrence for 3-point configurations},
   JOURNAL = {Forum Math. Sigma},
  FJOURNAL = {Forum of Mathematics. Sigma},
    VOLUME = {10},
      YEAR = {2022},
     PAGES = {Paper No. e107, 57},
      ISSN = {2050-5094},
   MRCLASS = {37A15 (05D10 37A30)},
  MRNUMBER = {4519061},
       DOI = {10.1017/fms.2022.97},
       URL = {https://doi.org/10.1017/fms.2022.97},
}

@article {morejoint,
    AUTHOR = {Donoso, Sebasti\'an and Ferr\'e{} Moragues, Andreu and
              Koutsogiannis, Andreas and Sun, Wenbo},
     TITLE = {Decomposition of multicorrelation sequences and joint
              ergodicity},
   JOURNAL = {Ergodic Theory Dynam. Systems},
  FJOURNAL = {Ergodic Theory and Dynamical Systems},
    VOLUME = {44},
      YEAR = {2024},
    NUMBER = {2},
     PAGES = {432--480},
      ISSN = {0143-3857,1469-4417},
   MRCLASS = {37A05 (28A99 37A30 60F99)},
  MRNUMBER = {4686270},
MRREVIEWER = {Anh\ N.\ Le},
       DOI = {10.1017/etds.2023.30},
       URL = {https://doi.org/10.1017/etds.2023.30},
}

@article {integerpart,
    AUTHOR = {Koutsogiannis, Andreas},
     TITLE = {Integer part polynomial correlation sequences},
   JOURNAL = {Ergodic Theory Dynam. Systems},
  FJOURNAL = {Ergodic Theory and Dynamical Systems},
    VOLUME = {38},
      YEAR = {2018},
    NUMBER = {4},
     PAGES = {1525--1542},
      ISSN = {0143-3857,1469-4417},
   MRCLASS = {37A30 (11B30 37A05 37A45 40A05 81P40 94A55)},
  MRNUMBER = {3789175},
MRREVIEWER = {Bryna\ Kra},
       DOI = {10.1017/etds.2016.67},
       URL = {https://doi.org/10.1017/etds.2016.67},
}

@article {MCS,
    AUTHOR = {Frantzikinakis, Nikos},
     TITLE = {Multiple correlation sequences and nilsequences},
   JOURNAL = {Invent. Math.},
  FJOURNAL = {Inventiones Mathematicae},
    VOLUME = {202},
      YEAR = {2015},
    NUMBER = {2},
     PAGES = {875--892},
      ISSN = {0020-9910,1432-1297},
   MRCLASS = {37A30 (05D10 11B30 37A05)},
  MRNUMBER = {3418246},
MRREVIEWER = {Vladimir\ S.\ Anashin},
       DOI = {10.1007/s00222-015-0579-7},
       URL = {https://doi.org/10.1007/s00222-015-0579-7},
}

@article {decomposition,
    AUTHOR = {Le, Anh N. and Moreira, Joel and Richter, Florian K.},
     TITLE = {A decomposition of multicorrelation sequences for commuting
              transformations along primes},
   JOURNAL = {Discrete Anal.},
  FJOURNAL = {Discrete Analysis},
      YEAR = {2021},
     PAGES = {Paper No. 4, 27},
      ISSN = {2397-3129},
   MRCLASS = {37A44 (11B30 37A25)},
  MRNUMBER = {4274457},
MRREVIEWER = {Bryna\ Kra},
       DOI = {10.19086/da},
       URL = {https://doi.org/10.19086/da},
}

@article {FranKucajoint,
    AUTHOR = {Frantzikinakis, Nikos and Kuca, Borys},
     TITLE = {Joint ergodicity for commuting transformations and
              applications to polynomial sequences},
   JOURNAL = {Invent. Math.},
  FJOURNAL = {Inventiones Mathematicae},
    VOLUME = {239},
      YEAR = {2025},
    NUMBER = {2},
     PAGES = {621--706},
      ISSN = {0020-9910,1432-1297},
   MRCLASS = {37A44 (05D10 11B30 28D05)},
  MRNUMBER = {4850605},
MRREVIEWER = {Ryo\ Moore},
       DOI = {10.1007/s00222-024-01313-w},
       URL = {https://doi.org/10.1007/s00222-024-01313-w},
}

@article {FranKuca,
    AUTHOR = {Frantzikinakis, Nikos and Kuca, Borys},
     TITLE = {Degree lowering for ergodic averages along arithmetic
              progressions},
   JOURNAL = {J. Anal. Math.},
  FJOURNAL = {Journal d'Analyse Math\'ematique},
    VOLUME = {154},
      YEAR = {2024},
    NUMBER = {1},
     PAGES = {199--253},
      ISSN = {0021-7670,1565-8538},
   MRCLASS = {37A30},
  MRNUMBER = {4846312},
MRREVIEWER = {Song\ Shao},
       DOI = {10.1007/s11854-024-0347-y},
       URL = {https://doi.org/10.1007/s11854-024-0347-y},
}

@article {FranHost,
    AUTHOR = {Frantzikinakis, Nikos and Host, Bernard},
     TITLE = {Higher order {F}ourier analysis of multiplicative functions
              and applications},
   JOURNAL = {J. Amer. Math. Soc.},
  FJOURNAL = {Journal of the American Mathematical Society},
    VOLUME = {30},
      YEAR = {2017},
    NUMBER = {1},
     PAGES = {67--157},
      ISSN = {0894-0347},
       DOI = {10.1090/jams/857},
       URL = {https://doi.org/10.1090/jams/857},
}

@article {Fran,
    AUTHOR = {Frantzikinakis, Nikos},
     TITLE = {Some open problems on multiple ergodic averages},
   JOURNAL = {Bull. Hellenic Math. Soc.},
  FJOURNAL = {Bulletin of the Hellenic Mathematical Society},
    VOLUME = {60},
      YEAR = {2016},
     PAGES = {41--90},
   MRCLASS = {37A30 (05D10 11B30 37A45)},
  MRNUMBER = {3613710},
MRREVIEWER = {El Houcein El Abdalaoui},
}

@article {BrietGreen,
    AUTHOR = {Bri\"{e}t, Jop and Green, Ben},
     TITLE = {Multiple correlation sequences not approximable by
              nilsequences},
   JOURNAL = {Ergodic Theory Dynam. Systems},
  FJOURNAL = {Ergodic Theory and Dynamical Systems},
    VOLUME = {42},
      YEAR = {2022},
    NUMBER = {9},
     PAGES = {2711--2722},
      ISSN = {0143-3857},
   MRCLASS = {11B30 (11B25)},
  MRNUMBER = {4461688},
       DOI = {10.1017/etds.2021.66},
       URL = {https://doi.org/10.1017/etds.2021.66},
}

@article {MCFranHost,
    AUTHOR = {Frantzikinakis, Nikos and Host, Bernard},
     TITLE = {Weighted multiple ergodic averages and correlation sequences},
   JOURNAL = {Ergodic Theory Dynam. Systems},
  FJOURNAL = {Ergodic Theory and Dynamical Systems},
    VOLUME = {38},
      YEAR = {2018},
    NUMBER = {1},
     PAGES = {81--142},
      ISSN = {0143-3857},
   MRCLASS = {37A25 (11K31)},
  MRNUMBER = {3742539},
MRREVIEWER = {Song Shao},
       DOI = {10.1017/etds.2016.19},
       URL = {https://doi.org/10.1017/etds.2016.19},
}

@article {BHK,
    AUTHOR = {Bergelson, Vitaly and Host, Bernard and Kra, Bryna},
     TITLE = {Multiple recurrence and nilsequences},
      NOTE = {With an appendix by Imre Ruzsa},
   JOURNAL = {Invent. Math.},
  FJOURNAL = {Inventiones Mathematicae},
    VOLUME = {160},
      YEAR = {2005},
    NUMBER = {2},
     PAGES = {261--303},
      ISSN = {0020-9910},
   MRCLASS = {37A30 (05D10 28D05 37A05)},
  MRNUMBER = {2138068},
MRREVIEWER = {Randall McCutcheon},
       DOI = {10.1007/s00222-004-0428-6},
       URL = {https://doi.org/10.1007/s00222-004-0428-6},
}

@article {Leibman2,
    AUTHOR = {Leibman, A.},
     TITLE = {Nilsequences, null-sequences, and multiple correlation
              sequences},
   JOURNAL = {Ergodic Theory Dynam. Systems},
  FJOURNAL = {Ergodic Theory and Dynamical Systems},
    VOLUME = {35},
      YEAR = {2015},
    NUMBER = {1},
     PAGES = {176--191},
      ISSN = {0143-3857},
   MRCLASS = {37A30 (28D05)},
  MRNUMBER = {3294297},
MRREVIEWER = {El Houcein El Abdalaoui},
       DOI = {10.1017/etds.2013.36},
       URL = {https://doi.org/10.1017/etds.2013.36},
}

@article {Leibman1,
    AUTHOR = {Leibman, A.},
     TITLE = {Multiple polynomial correlation sequences and nilsequences},
   JOURNAL = {Ergodic Theory Dynam. Systems},
  FJOURNAL = {Ergodic Theory and Dynamical Systems},
    VOLUME = {30},
      YEAR = {2010},
    NUMBER = {3},
     PAGES = {841--854},
      ISSN = {0143-3857},
   MRCLASS = {37A30 (28D05 37A05 37A45)},
  MRNUMBER = {2643713},
MRREVIEWER = {Bryna Kra},
       DOI = {10.1017/S0143385709000303},
       URL = {https://doi.org/10.1017/S0143385709000303},
}

@article{shalom2,
AUTHOR = {Shalom, Or}, 
TITLE= {Multiple ergodic averages in abelian groups and Khintchine type recurrence},
JOURNAL = {Trans. Amer. Math. Soc.},
VOLUME = {375},
YEAR = {2022},
PAGES = {2729--2761},
}

@article {cl1,
    AUTHOR = {Conze, Jean-Pierre and Lesigne, Emmanuel},
     TITLE = {Th\'{e}or\`emes ergodiques pour des mesures diagonales},
   JOURNAL = {Bull. Soc. Math. France},
  FJOURNAL = {Bulletin de la Soci\'{e}t\'{e} Math\'{e}matique de France},
    VOLUME = {112},
      YEAR = {1984},
    NUMBER = {2},
     PAGES = {143--175},
      ISSN = {0037-9484},
   MRCLASS = {28D05 (22D40)},
  MRNUMBER = {788966},
MRREVIEWER = {Karl David},
       URL = {http://www.numdam.org/item?id=BSMF_1984__112__143_0},
}

@article {cl2,
    AUTHOR = {Conze, Jean-Pierre and Lesigne, Emmanuel},
     TITLE = {Sur un th\'{e}or\`eme ergodique pour des mesures diagonales},
   JOURNAL = {C. R. Acad. Sci. Paris S\'{e}r. I Math.},
  FJOURNAL = {Comptes Rendus des S\'{e}ances de l'Acad\'{e}mie des Sciences. S\'{e}rie
              I. Math\'{e}matique},
    VOLUME = {306},
      YEAR = {1988},
    NUMBER = {12},
     PAGES = {491--493},
      ISSN = {0249-6291},
   MRCLASS = {22D40 (28D05)},
  MRNUMBER = {939438},
MRREVIEWER = {Pierre Michel},
}

@incollection {cl3,
    AUTHOR = {Conze, Jean-Pierre and Lesigne, Emmanuel},
     TITLE = {Sur un th\'{e}or\`eme ergodique pour des mesures diagonales},
 BOOKTITLE = {Probabilit\'{e}s},
    SERIES = {Publ. Inst. Rech. Math. Rennes},
    VOLUME = {1987},
     PAGES = {1--31},
 PUBLISHER = {Univ. Rennes I, Rennes},
      YEAR = {1988},
   MRCLASS = {28D05},
  MRNUMBER = {989141},
MRREVIEWER = {Nathaniel F. G. Martin},
}

@Article{bergelson1996polynomial,
  Title                    = {{Polynomial extensions of van der Waerden's and Szemer{\'e}di's theorems}},
  Author                   = {V.~Bergelson and A.~Leibman},
  Journal                  = {J.~Amer.~Math.~Soc.},
  Year                     = {1996},
  Number                   = {3},
  Pages                    = {725--753},
  Volume                   = {9}
}

@Article{furstenberg1977ergodic,
  Title                    = {{Ergodic behaviour of diagonal measures and a theorem of Szemer{\'e}di on arithmetic progressions}},
  Author                   = {H.~Furstenberg},
  Journal                  = {J. Anal. Math.},
  Year                     = {1977},
  Pages                    = {204-256},
  Volume                   = {31}
}

@Article{host2005nonconventional,
  Title                    = {{Nonconventional ergodic averages and nilmanifolds}},
  Author                   = {B.~Host and B.~Kra},
  Journal                  = {Ann.~Math.},
  Year                     = {2005},
  Number                   = {1},
  Pages                    = {397-488},
  Volume                   = {161}
}

@Article{szemeredi1975sets,
  Title                    = {{On sets of integers containing no $k$ elements in arithmetic progression}},
  Author                   = {E.~Szemer{\'e}di},
  Journal                  = {Acta.~Arith.},
  Year                     = {1975},
  Pages                    = {199-245},
  Volume                   = {27}
}

@Article{ziegler2007universal,
  Title                    = {{Universal characteristic factors and Furstenberg averages}},
  Author                   = {T.~Ziegler},
  Journal                  = {J.~Amer.~Math.~Soc.},
  Year                     = {2007},
  Pages                    = {53-97},
  Volume                   = {20}
}

@article {ABB,
    AUTHOR = {Ackelsberg, Ethan and Bergelson, Vitaly and Best, Andrew},
     TITLE = {Multiple recurrence and large intersections for abelian group
              actions},
   JOURNAL = {Discrete Anal.},
  FJOURNAL = {Discrete Analysis},
      YEAR = {2021},
     PAGES = {Paper No. 18, 91},
      ISSN = {2397-3129},
       DOI = {10.19086/da},
       URL = {https://doi.org/10.19086/da},
}

@book {EinsiedlerWardbook,
    AUTHOR = {Einsiedler, Manfred and Ward, Thomas},
     TITLE = {Ergodic theory with a view towards number theory},
    SERIES = {Graduate Texts in Mathematics},
    VOLUME = {259},
 PUBLISHER = {Springer-Verlag London, Ltd., London},
      YEAR = {2011},
     PAGES = {xviii+481},
      ISBN = {978-0-85729-020-5},
   MRCLASS = {37A45 (05D10 11J70 11K50 28Dxx 37-01 37D40)},
  MRNUMBER = {2723325},
MRREVIEWER = {Vitaly\ Bergelson},
       DOI = {10.1007/978-0-85729-021-2},
       URL = {https://doi.org/10.1007/978-0-85729-021-2},
}

\end{document}